\newtheorem{thm}{Theorem}[section]
\newtheorem{prop}[thm]{Proposition}
\newtheorem{cor}[thm]{Corollary}
\newtheorem{lem}[thm]{Lemma}
\theoremstyle{definition}
\newtheorem{rem}[thm]{Remark}
\newcommand{\ra}{\rightarrow}
\newcommand{\bk}{\backslash}
\newcommand{\mc}{\mathcal}
\newcommand{\mb}{\mathbb}
\newcommand{\sg}{\sigma}
\newcommand{\llf}{\left\lfloor}
\newcommand{\e}{\varepsilon}
\newcommand{\rrf}{\right\rfloor}
\renewcommand{\bar}{\overline}
\begin{document}
\title{Short Character Sums and the P\'{o}lya-Vinogradov Inequality}
\author{Alexander P. Mangerel}
\address{Centre de Recherches Math\'{e}matiques, Universit\'{e} de Montr\'{e}al, Montr\'{e}al, Qu\'{e}bec}
\email{smangerel@gmail.com}
\begin{abstract}
%Fix an odd integer $g \geq 3$ and let $q$ be sufficiently large in terms of $g$. We show that given any primitive Dirichlet character $\chi$ modulo $q$ with order $g$ and any odd character $\psi$ of order $r$ modulo $\ell \leq (\log_2 q)^{1/3}$, if $t > \exp\left((\log q)^{1-\delta_g/2}\right)$ then
%$$\left|\sum_{n \leq t} \chi(n)\psi(n) \right| \ll t\left(\frac{4\log_3 t}{\delta_g\log_2 t}\right)^{\frac{1}{3[g,r]^2}},$$
%where $\delta_g := 1-\frac{g}{\pi}\sin(\pi/g)$. \\
%As an application, we establish an analogue of Vinogradov's conjecture on quadratic non-residues of the kind that the non-zero values of an order $2g$ character $\chi\psi$, where $\chi$ has odd order $g$ and $\psi$ is odd and quadratic, equidistributes among the $2g$th order roots of unity in intervals $[1,t]$ as long as $t > q^{\e}$, for any $\e > 0$. \\
We show in a quantitative way that any odd primitive character $\chi$ modulo $q$ of fixed order $g \geq 2$ satisfies the property that if the P\'{o}lya-Vinogradov inequality for $\chi$ can be improved to 
$$\max_{1 \leq t \leq q} \left|\sum_{n \leq t} \chi(n)\right| = o_{q \ra \infty}(\sqrt{q}\log q)$$ then for any $\e > 0$ one may exhibit cancellation in partial sums of $\chi$ on the interval $[1,t]$ whenever $t > q^{\e}$, i.e.,$$\sum_{n \leq t} \chi(n) = o_{q \ra \infty}(t) \text{ for all $t > q^{\e}$.}$$
We also prove a converse implication, to the effect that if all odd primitive characters of fixed order dividing $g$ exhibit cancellation in short sums then the P\'{o}lya-Vinogradov inequality can be improved for all odd primitive characters of order $g$. Some applications are also discussed.
%Altogether, this shows that the problems of improvingto the P\'{o}lya-Vinogradov inequality and of showing cancellation in short sums of characters are of the same difficulty.
%In a similar direction, we show in this paper that for most moduli $q$, including all prime moduli, any character $\chi$ modulo $q$ of fixed order $k$ satisfies the property that if the P\'{o}lya-Vinogradov inequality for $\chi$ can be improved to 
%$$\max_{1 \leq t \leq q} \left|\sum_{n \leq t} \chi(n)\right| \leq \sqrt{q}(\log q)/\xi(q)$$ for any increasing unbounded function $\xi$ then one may exhibit cancellation in partial sums of $\chi$ on the interval $[1,t]$ whenever $t > q^{\e}$, i.e.,$$\sum_{n \leq t} \chi(n) = o(t) \text{ for all $t > q^{\e}$.}$$
%In the case of odd order characters, for which an improvement of this type to the P\'{o}lya-Vinogradov inequality is known, we show (in a stronger form) that for \emph{any} modulus $q$ we have cancellation in partial sums of $\chi$ on $[1,t]$ whenever $t > q^{\e}$ for any $\e > 0$.
\end{abstract}
\maketitle
\section{Introduction and Main Results}
A central problem in analytic number theory concerns sharply estimating the partial sums of non-principal Dirichlet characters. There are two important types of problems regarding character sums to which a wealth of literature is devoted. \\
Let $\chi$ be a non-principal Dirichlet character modulo $q$, with $q$ large. A first, typically harder, problem concerns demonstrating estimates for short sums in the form
\begin{equation}\label{Cancel}
\sum_{n \leq t} \chi(n) = o_{q \ra \infty}(t)
\end{equation}
for any $t > q^{\e}$ and any $\e > 0$. Except under exceptional circumstances (e.g., when $q$ is smooth, see \cite{Iwa} and \cite{GrRi}), such estimates have been difficult to demonstrate. 
%It is conjectured that the quantitative estimate $O_{\e}(t^{1/2}q^{\e})$ occurs for all non-principal characters $\chi$. 
The best general result in this direction, due to Burgess, shows that such cancellation occurs (at least for $q$ cube-free) provided that $t > q^{1/4+\e}$. For this reason, we shall refer to estimates like \eqref{Cancel} as being of \emph{Burgess-type}.\\
%One application of such investigations is the distribution of quadratic residues and non-residues to various moduli.  Let $p$ be a prime and let $\chi = \left(\frac{\cdot}{p}\right)$. Cancellation in the char
A second type of problem concerns estimating the maximal size of partial sums of Dirichlet characters. A classical result in this direction, proven independently by P\'{o}lya and I.M. Vinogradov states that 
\begin{equation}\label{ClassPV}
\max_{1 \leq t \leq q} \left|\sum_{n \leq t} \chi(n)\right| \ll \sqrt{q}\log q.
\end{equation}
This result has subsequently been improved assuming the Generalized Riemann Hypothesis to the best possible upper bound\footnote{For $k \geq 1$, we write $\log_k q = \log_{k-1}(\log q)$, and $\log_1 q = \log q$.} $O(\sqrt{q}\log_2 q)$ by Montgomery and Vaughan, and this is best possible according to an old construction due to Paley (\cite{MV}, \emph{in loc. cit.}). However, unconditional improvements to \eqref{ClassPV} have been hard to come by in general (though see Remark \ref{RemOddOrd} below). We refer to the problem of improving \eqref{ClassPV} as one of \emph{PV-type}. \\
A priori, these two problems are not directly connected. Nevertheless, 
the work of Bober and Goldmakher \cite{BoGo} on one hand, and the more recent paper of Fromm and Goldmakher \cite{FrGo} have revealed a connection between hypothetical improvements to the P\'{o}lya-Vinogradov inequality and estimates for short quadratic character sums. For instance, Bober and Goldmakher \cite{BoGo} showed that if the P\'{o}lya-Vinogradov inequality can be improved for any \emph{even}\footnote{As usual, we call a character $\xi$ \emph{even} if $\xi(-1) = 1$, and \emph{odd} if $\xi(-1) = -1$.} character $\xi$ modulo $q$ in the form
\begin{equation}\label{PVIMP}
\max_{1 \leq t \leq q} \left|\sum_{n \leq t} \xi(n)\right| \leq \sqrt{q}\frac{\log q}{f(q)}
\end{equation}
for some non-decreasing function $f: \mb{R} \ra \mb{R}$ tending to infinity, then for any \emph{odd} character $\chi$ modulo $m$ there is an integer $n \ll_{\e} m^{\e}$ such that $\chi(n) \neq 0,1$. This result was extended (still in the case of quadratic characters $\xi$ and $\chi$) in \cite{FrGo}, where it was shown that a hypothesis like \eqref{PVIMP} for quadratic \emph{even} characters implies the bound $\sum_{n \leq t} \chi(n) = o(t)$ for any $t > m^{\e}$, whenever $\chi$ is a quadratic \emph{odd} character of prime conductor. \\
In this paper, we extend the latter results in two ways. First, we show that a hypothesis like \eqref{PVIMP} for an odd character $\chi$ implies a Burgess-type bound for $\chi$ itself in intervals of length $q^{\e}$. Second, we extend this result to characters of \emph{any} fixed order, rather than just for quadratic characters. Moreover, our result quantifies the range $t > q^{\e}$ as well as the $o(t)$ bound here.\\
%Using this result, Vinogradov proved that for any prime $p$, there is a quadratic non-residue modulo $p$ of size $\gg_{\e} p^{1/(2\sqrt{e}) + \e}$, whereas Vinogradov himself conjectured that this bound ought to be improved to $\gg_{\e}p^{\e}$. \\
%The current best possible result in this problem is due to Burgess, who showed that [...]. 
\begin{thm}\label{THMGEN}
Let $g \geq 2$ and let $q$ be large. Let $a(t)$ be a non-decreasing function such that\footnote{A classical result of Schur implies that $\max_{1 \leq N \leq q}|\sum_{n \leq N} \chi(n)| \gg \sqrt{q}$, and hence $a(q) \ll \log q$ for any non-principal character $\chi$. We will thus have $\log a(q) \ll \log_2 q$ in general, and the above assumption is rather mild. For instance, it would easily hold for any function of the form $a(t) = (\log_k t)^A$, for any $k \geq 1$ and $A > 0$.} $\log a(q)\asymp \log a(t)$ for all $t > \exp\left(\sqrt{\log q}\right)$. Let $\chi$ be a primitive odd character modulo $q$ with order $g$, and assume that the P\'{o}lya-Vinogradov inequality for $\chi$ can be improved in the form
$$\max_{1 \leq t \leq q} \left|\sum_{n \leq t} \chi(n) \right| \leq \sqrt{q} \frac{\log q}{a(q)}.$$
Then for any $t > \exp\left(\frac{\log q}{a(q)^{1/6}}\right)$, we have
$$\sum_{n \leq t} \chi(n) \ll_g t\left(\frac{\log_2 a(t)}{\log a(t)}\right)^{1/(19g^2)}.$$
\end{thm}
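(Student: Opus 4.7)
The plan is to adapt the pretentious framework of Granville--Soundararajan, in the quantitative form used by Bober--Goldmakher and Fromm--Goldmakher, extending it from the quadratic case to characters of any fixed order $g$. The argument has three main stages.

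\textbf{Stage 1: Hypothesis $\Longrightarrow$ non-pretentiousness.} For odd primitive $\chi$, the P\'{o}lya Fourier expansion gives
$$\max_{1 \leq t \leq q}\Big|\sum_{n \leq t}\chi(n)\Big| = \frac{\sqrt{q}}{\pi}\max_{\theta \in [0,1]}\Big|\sum_{n \geq 1}\frac{\bar\chi(n)(1-e(-n\theta))}{n}\Big| + O(\log q).$$
Specialising $\theta$ (and twisting by suitable short characters $\psi$) expresses this maximum in terms of pretentious functionals such as $L(1, \bar\chi\psi)$ and partial Euler products $\prod_{p \leq y}(1-\chi\bar\psi(p)/p)^{-1}$. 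The PV hypothesis $\max_t|\sum \chi| \leq \sqrt{q}\log q / a(q)$ therefore forces each such functional to be small. Using the formula $L(1,\bar\chi\psi) \asymp \prod_{p \leq y}(1-\chi\bar\psi(p)/p)^{-1}$ combined with Mertens, one translates this into a quantitative lower bound on the pretentious distance
$$\mathbb{D}(\chi,\psi;y)^2 := \sum_{p \leq y}\frac{1-\Re(\chi\bar\psi)(p)}{p} \;\gg\; \log_2 a(q),$$
valid uniformly for every odd $\psi$ of conductor up to some threshold $Q_0 = q^{o(1)}$ and every $y$ in an appropriate range.

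\textbf{Stage 2: Halász-type upper bound for short sums.} Next I would invoke the Granville--Harper--Soundararajan refinement of Hal\'{a}sz's theorem, which says that for $t \leq q$ and $y \leq t$,
$$\frac{1}{t}\Big|\sum_{n \leq t}\chi(n)\Big| \;\ll\; \inf_{\psi, |u| \leq \log t}\bigl(1 + \mathbb{D}(\chi, \psi\cdot n^{iu}; y)^2\bigr)e^{-\mathbb{D}(\chi,\psi\cdot n^{iu};y)^2} \;+\; \text{(minor terms)}.$$
Feeding in the distance bound from Stage 1 with $y = t^{\delta(g)}$ would be enough \emph{if} the minimising $\psi$ ranged only over short characters of order dividing $g$.

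\textbf{Stage 3: Reduction of the minimising $\psi$ to order dividing $g$.} The key algebraic input --- which is also where the exponent $1/(19g^2)$ arises --- is that, since $\chi$ has order $g$, for any $\psi$ the character $\chi^g = 1$, so the distance must behave compatibly with $g$-th power averaging: replacing $\psi$ by its ``$g$-th root'' component and applying H\"{o}lder's inequality on primes gives
$$\mathbb{D}(\chi, \psi\cdot n^{iu}; y)^2 \;\gg\; \frac{1}{g^2}\min_{\psi'\text{ of order }|\,g}\mathbb{D}(\chi,\psi';y)^2 - O(\log_2 y),$$
after absorbing the archimedean twist via a standard Vinogradov mean-value / integration-by-parts argument on the factor $n^{iu}$. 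This reduces the infimum over all $\psi$ to an infimum over the much smaller set of odd $\psi$ of order dividing $g$ and conductor $\leq Q_0$, to which Stage 1 applies. Optimising $Q_0$, $y$, $\delta(g)$ against the hypothesis $t > \exp(\log q/a(q)^{1/6})$ should yield the claimed saving $\bigl(\log_2 a(t)/\log a(t)\bigr)^{1/(19g^2)}$, after noting $\log a(t) \asymp \log a(q)$ by assumption.

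\textbf{Main obstacle.} The critical step is Stage 3: controlling the minimising $(\psi, u)$ in Hal\'{a}sz's inequality for a character of order $g > 2$. In the quadratic case treated in \cite{FrGo}, the real-valuedness of $\chi$ automatically forces $u = 0$ and $\psi$ of order $2$, and positivity then drives the argument. For general $g$, the archimedean parameter $u$ can be non-trivial and $\psi$ can \emph{a priori} have arbitrarily large order; extracting the correct ``$g$-component'' and absorbing the $n^{iu}$ twist without losing too much in the distance bound is what both dictates the threshold $a(q)^{1/6}$ in the range of $t$ and explains the shape of the quantitative saving. Calibrating these losses so as to dominate the Hal\'{a}sz error term is the technical heart of the proof.
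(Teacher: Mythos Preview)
Your Stage~2 mis-states Hal\'asz' inequality: for a $1$-bounded multiplicative $f$ the bound is $t^{-1}\bigl|\sum_{n\le t}f(n)\bigr|\ll(1+M)e^{-M}+T^{-1/2}$ with $M=\min_{|u|\le T}\mb{D}(f,n^{iu};t)^2$, and no auxiliary character $\psi$ appears. Stage~3 as you frame it is therefore moot; the only reduction needed is from $n^{iu}$ to the constant function $1$, and this the paper carries out (Lemma~\ref{EQUIV}) precisely via the fixed-order hypothesis, obtaining $\min_{|u|\le T}\mb{D}(\chi,n^{iu};t)\ge(2g)^{-1}\min\{\sqrt{\log_2 t},\mb{D}(\chi,1;t)\}-O(1)$. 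So the quantity that must ultimately be shown large is $\mb{D}(\chi,1;t)$, or equivalently $\bigl|\sum_{n\le N}\chi(n)/n\bigr|$ must be shown small uniformly for $N\le q$.

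This is where the genuine gap lies, and it is not the obstacle you identify. After correcting the parity (since $\chi$ is odd, the P\'olya expansion singles out \emph{even} primitive twists $\psi$, not odd ones), Stage~1 combined with Lemma~\ref{CASEWITHPAR} yields only that $\bigl|\sum_{n\le N}\chi(n)\psi(n)/n\bigr|$ is small for every even \emph{primitive} $\psi$ of bounded conductor. But the trivial character is not primitive, so nothing follows directly about the untwisted sum $\sum_{n\le N}\chi(n)/n$. Bridging this gap is the technical heart of the paper (Corollary~\ref{BETTERLOGMEANFORCHI} and Lemma~\ref{EXPSUMBD}): one inserts an auxiliary even character of conductor $5$, runs a full major/minor-arc analysis on the resulting additively twisted logarithmic sum, and then strips the auxiliary twist back out. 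Your sketch does not contain this untwisting step. The paper then closes the argument not through Hal\'asz directly but via the converse implication ``large C\'esaro sum $\Rightarrow$ large logarithmic sum'' (Proposition~\ref{CESTOLOG}), proved by constructing the non-negative multiplicative function $g=1\ast1\ast\chi\ast\bar\chi$ and lower-bounding its partial sums by Hildebrand's theorem; this is what replaces the positivity of $1\ast\chi$ that drives the quadratic case in \cite{FrGo}.
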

As a consequence of Theorem \ref{THMGEN}, we have the following corollary regarding the relationship between improvements to the P\'{o}lya-Vinogradov inequality and Burgess-type bounds for fixed order characters.
\begin{cor} \label{Qual}
Let $g \geq 2$, and let $\chi$ be an odd primitive character with modulus $q$ and order $g$. Assume that the P\'{o}lya-Vinogradov inequality can be improved in the form
\begin{equation}
\max_{1 \leq t \leq q} \left|\sum_{n \leq t} \chi(n)\right| = o_{q \ra \infty}(\sqrt{q}\log q).
\end{equation}
Then for any $\e > 0$ and any $t > q^{\e}$ we have
$$\sum_{n \leq t} \chi(n) = o_{q \ra \infty}(t).$$
\end{cor}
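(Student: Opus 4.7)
The plan is to derive the statement as a direct consequence of Theorem~\ref{THMGEN}, once one produces for each $\chi$ in the hypothesized sequence an appropriate non-decreasing function $a$ with $a(q)\to\infty$ that fits the slowly varying hypothesis of that theorem.

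Write $M_0(q) := \max_{1 \leq t \leq q}|\sum_{n \leq t}\chi(n)|$ and set $\omega := \sqrt{q}\log q/M_0(q)$, so that the assumption of the corollary is precisely $\omega\to\infty$ as $q\to\infty$. I would take
$$a(t) := \min(\log_3 t,\,\omega),$$
which is non-decreasing in $t$ and, by construction, satisfies $a(q) \leq \omega$; this immediately yields the P\'olya--Vinogradov hypothesis of Theorem~\ref{THMGEN}. A short case analysis, splitting according to whether $\log_3 t$ or $\omega$ realizes the minimum in $a(t)$ and $a(q)$, would confirm the condition $\log a(q) \asymp \log a(t)$ for $t > \exp(\sqrt{\log q})$; the essential input is that $\log t > \sqrt{\log q}$ forces $\log_4 t = \log_4 q + o(1)$, combined with the trivial bound $\log\omega \leq \log_4 q$ in the regime $\omega < \log_3 q$.

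Next, applying Theorem~\ref{THMGEN} with this $a$ would give
$$\sum_{n \leq t}\chi(n) \ll_g t\left(\frac{\log_2 a(t)}{\log a(t)}\right)^{1/(19g^2)}$$
for every $t > \exp(\log q/a(q)^{1/6})$. Since both $\omega$ and $\log_3 q$ tend to infinity, so does $a(q)$; consequently, for any fixed $\e > 0$ and all sufficiently large $q$, the range $(\exp(\log q/a(q)^{1/6}),\,q]$ contains $(q^\e,\,q]$. Moreover, on this range $\log_3 t \asymp \log_3 q$, so $a(t) = \min(\log_3 t, \omega) \to \infty$ as $q\to\infty$ uniformly for $t > q^\e$, and the factor $(\log_2 a(t)/\log a(t))^{1/(19g^2)}$ tends to zero. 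This yields $\sum_{n \leq t}\chi(n) = o_{q \to \infty}(t)$ uniformly in $t > q^\e$, as desired.

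The main obstacle is the tension between two requirements on $a$: it must be small enough at $q$ to absorb the a priori unknown rate at which $M_0(q)/(\sqrt{q}\log q)$ tends to zero, yet grow slowly enough that $\log a(q) \asymp \log a(t)$ holds on a range extending well below $q$. The construction above reconciles this by letting the slowly-growing factor $\log_3 t$ govern variation in $t$ while the $q$-dependent cap $\omega$ enforces the P\'olya--Vinogradov hypothesis at $t=q$; the remaining verifications are routine manipulations of iterated logarithms.
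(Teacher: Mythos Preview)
Your approach is the natural one, and is what the paper intends: choose an auxiliary function $a$ compatible with the quantitative hypothesis and feed it into Theorem~\ref{THMGEN}. The paper does not write out a proof of this corollary, so you are correctly filling in the implicit step.

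There is one small gap in your verification of the slowly-varying hypothesis. You claim that $\log t > \sqrt{\log q}$ forces $\log_4 t = \log_4 q + o(1)$, but this only holds under the additional constraint $t \leq q$; for $t$ much larger than $q$ it is simply false. Consequently, in the regime $\omega > \log_3 q$ your function has $a(q) = \log_3 q$ while $a(t) = \omega$ for all sufficiently large $t$, and then $\log a(t) = \log\omega$ can be as large as $\log_2 q$, far exceeding any fixed multiple of $\log a(q) = \log_4 q$. So the hypothesis $\log a(q) \asymp \log a(t)$ of Theorem~\ref{THMGEN}, as literally stated for all $t > \exp(\sqrt{\log q})$, is not satisfied by your $a$.

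The repair is painless: replace $\omega$ by $\omega' := \min(\omega,\log_3 q)$ in your definition, i.e.\ take $a(t) = \min(\log_3 t,\,\omega')$. Then $a(q) = \omega'$ always, and for every $t > \exp(\sqrt{\log q})$ one has $a(t) \in [\omega' - \log 2,\, \omega']$, so $\log a(t) = \log a(q) + o(1)$ uniformly. Since $\omega' \to \infty$ with $q$, the rest of your argument goes through unchanged. (Alternatively, one can simply note that the proof of Theorem~\ref{THMGEN} disposes of the range $t > q$ by P\'olya--Vinogradov before ever invoking the $\asymp$ condition, so only $t \leq q$ matters; your case analysis is then correct as written.)
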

The above results demonstrate that in order to prove cancellation in short sums of a given odd character of fixed order, it is sufficient to improve estimates for maximal sums of that same character. A converse implication to a similar effect also holds for both odd and even characters of a fixed order. 
\begin{prop} \label{Conv}
Let $q$ be large and $g\geq 2$ fixed. Let $1 > \delta > Cg\left(\frac{\log_3 q}{\log_2 q}\right)^{1/2}$ for $C>0$ sufficiently large. Let $\chi$ be a primitive character modulo $q$ of order $g$, and suppose that 
\begin{align*}
\max_{1 \leq t \leq q} \left|\sum_{n \leq t} \chi(n) \right| > \delta \sqrt{q}\log q.
\end{align*}
Then there is an absolute constant $c > 0$ such that for any $\e \geq c\delta/\log(1/\delta)$, there is a unique primitive character $\psi$ satisfying $\chi\psi(-1) = -1$ whose order $k$ satisfies $k|g$ and whose conductor $m = O(1)$ is independent of $\delta$, and a $t > q^{\e}$ such that
$$\sum_{n \leq t} \chi(n)\bar{\psi}(n) \gg_{\delta} t.$$
In particular, if for any odd primitive character $\chi'$ of conductor $q'$ and order dividing $g$ we have $\sum_{n \leq t} \chi'(n) = o(t)$ for any $t > (q')^{\e}$ and any $\e >0$, then for any odd primitive character $\chi$ of order $g$ and conductor $q$ we have $\max_{1 \leq t \leq q} \left|\sum_{n \leq t} \chi(n)\right| = o(\sqrt{q}\log q)$.
%Then for any primitive character $\chi$ modulo $q$ of order $k$ we have $$\max_{1 \leq t \leq q} \left|\sum_{n \leq t} \chi(n)\right| \leq \e \log(1/\epsilon) \sqrt{q}\log q.$$ 
\end{prop}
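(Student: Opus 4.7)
The plan is to leverage the pretentious framework of Granville--Soundararajan, refined by Goldmakher in the fixed-order setting. The hypothesis that $\chi$ has a large maximal partial sum will force $\chi$ to be multiplicatively ``close'' to some primitive character $\psi$ of small conductor; the twist $\chi\bar\psi$ then behaves like the constant function $1$, which in turn forces its short-interval mean values to be nontrivial.

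First, start from P\'olya's Fourier expansion of $\sum_{n\le t}\chi(n)$ as a truncated Dirichlet series in $\bar\chi(m)/m$ against trigonometric weights (sine kernels for odd $\chi$, cosine kernels for even $\chi$). The hypothesis $\max_t|\sum\chi(n)|>\delta\sqrt q\log q$ translates into a lower bound on the extremal size of this short series, which a standard pretentious analysis (Hal\'asz-type mean-value inequality combined with a Hal\'asz--Montgomery large sieve) forces to arise from a single primitive pretender $\psi$ of conductor $m=O_g(1)$ independent of $\delta$, of order $k\mid g$, and of parity opposite to $\chi$ (so that $\chi\psi(-1)=-1$), satisfying
\[\sum_{p\le q}\frac{1-\Re\chi(p)\bar\psi(p)}{p}\ll\log(1/\delta).\]
The \emph{uniqueness} of $\psi$ is where the lower bound $\delta\gg g(\log_3 q/\log_2 q)^{1/2}$ is used: it ensures that the pretentious distances between any two distinct primitive characters of conductor $\le O_g(1)$ and order dividing $g$ all exceed $\log(1/\delta)$, so that only one of them can simultaneously be close to $\chi$.

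Second, setting $f=\chi\bar\psi$, a multiplicative function with values in $k$-th roots of unity (and $0$ on ramified primes) whose pretentious distance to the constant function $1$ is small, a Granville--Soundararajan mean-value asymptotic gives
\[\frac{1}{t}\sum_{n\le t}f(n)=\prod_{p\le t}\left(1-\frac{1}{p}\right)\sum_{j\ge 0}\frac{f(p^j)}{p^j}+O\bigl((\log t)^{-\kappa}\bigr)\]
for some absolute $\kappa>0$. The Euler product is $\gg\exp(-C\sum_{p\le t}(1-\Re f(p))/p)\gg\delta^C$ by the previous step. For $t>q^{\e}$ with $\e\asymp\delta/\log(1/\delta)$, the error drops below $\delta^{C+1}$---a second use of the assumed lower bound on $\delta$, needed to guarantee $\log q^{\e}\gg\delta^{-C'}$---so that $\sum_{n\le t}\chi\bar\psi(n)\gg_\delta t$, as required.

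Finally, the ``in particular'' statement follows by contraposition and pigeonhole. If some sequence of odd primitive $\chi_q$ of order $g$ and modulus $q\to\infty$ had $\max_t|\sum\chi_q(n)|\ge\delta_0\sqrt q\log q$ for some fixed $\delta_0>0$, the main implication produces $\psi_q$ drawn from the finite list of primitive characters of conductor $\le M=O_g(1)$. Passing to a subsequence with $\psi_q\equiv\psi^*$ constant (necessarily even, since each $\chi_q$ is odd), the twist $\chi_q\bar\psi^*$ is induced by an odd primitive character $\chi_q^*$ of order dividing $g$ and conductor $\asymp q$; the hypothesized $o(t)$ cancellation for $\chi_q^*$ then contradicts the bound $\sum_{n\le t}\chi_q\bar\psi^*(n)\gg_{\delta_0}t$ established above (after a trivial M\"obius inversion absorbing the divisors shared by $q$ and the conductor of $\psi^*$). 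The principal obstacle is the first step---extracting $\psi$ with conductor bounded \emph{uniformly} in $\delta$ and proving its uniqueness---which relies crucially on the fixed-order hypothesis (restricting attention to a discrete finite set of candidate pretenders) and on the lower bound on $\delta$ (to separate those candidates in pretentious distance); the remaining steps are comparatively routine applications of pretentious multiplicative-function theory.
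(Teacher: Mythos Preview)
Your overall strategy matches the paper's: P\'olya's Fourier expansion plus a pretentious upper bound (the paper quotes equation (7.1) of \cite{LamMan}) forces $\chi$ to be close to some primitive $\psi$ of small conductor and opposite parity, and then a lower-bound principle converts the smallness of $\mb{D}(\chi\bar\psi,1;q)$ into a large Ces\`aro mean at some scale $t>q^{\e}$. Your treatment of the ``in particular'' clause (contraposition, pigeonhole on the finitely many $\psi$, M\"obius inversion to pass to the primitive inducer of $\chi\bar\psi$) is also essentially what the paper does.

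There are, however, two genuine gaps in your sketch.

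\textbf{The order condition $k\mid g$.} This is not a formality and is not explained by your remark that the fixed-order hypothesis ``restricts attention to a discrete finite set of candidate pretenders.'' The pretentious upper bound only produces a $\psi$ with conductor $m\ll\delta^{-2}$ (not $O_g(1)$ as you write) minimizing $\mc{D}_{\chi\bar\psi}$ among \emph{all} primitive characters of small modulus; nothing a priori constrains its order. The paper proves $k\mid g$ in a separate lemma (Lemma~\ref{Orders}): assuming $k\nmid g$, one writes $\psi^{(k,g)}=(\psi\bar\chi)^{sg}$ on unramified primes (Bezout: $rk^\ast+sg^\ast=1$), bounds $\mb{D}(\psi^{(k,g)},n^{it_0'};q)$ from above via the triangle inequality and the assumed small $\mc{D}_{\chi\bar\psi}$, and from below using that $\psi^{(k,g)}$ is non-principal together with the effective Siegel bound for $L(1+it,\psi^{(k,g)})$. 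The contradiction between these two bounds is precisely where the hypothesis $\delta>Cg(\log_3 q/\log_2 q)^{1/2}$ enters. The uniqueness of $\psi$ is then a separate repulsion argument (Lemma 3.1 of \cite{BaGrSo}), not the same computation.

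\textbf{The asymptotic in step two.} The formula you write, with error $O((\log t)^{-\kappa})$ for an \emph{absolute} $\kappa>0$, is false for general $1$-bounded multiplicative $f$ (consider $f(n)=n^{i\alpha}$). A correct version must track the distance of $f$ to every $n^{it}$; for $\mu_k$-valued $f$ this can be controlled (this is the content of Lemma~\ref{EQUIV}), but you never invoke such a step. The paper sidesteps the issue entirely: it uses the Granville--Soundararajan \emph{lower bound} (Lemma~\ref{GS}, from \cite{GSBurg}), which for $\lambda=\mb{D}(\chi\bar\psi,n^{it_0};q)^2+O(1)$ and $\eta=(\lambda e^{\lambda})^{-1}$ directly produces a single $y\in(q^{\eta},q]$ with $\bigl|y^{-1}\sum_{n\le y}\chi\bar\psi(n)\bigr|\gg e^{-\lambda}\gg_\delta 1$. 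Since $\lambda\le\log(1/\delta)+O(1)$, this immediately gives $\eta\gg\delta/\log(1/\delta)$, with no asymptotic formula needed.
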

\noindent See Remark \ref{RemFrGo} for an application of this proposition in regards to Burgess-type problems for even characters of a fixed (even) order $g$.\\
%Although this result is not as surprising, we provide a short proof of this in Section \ref{ConvSec} since we have not encountered such a result explicitly in the literature. \\
We deduce from Corollary \ref{Qual} the following, which for instance gives a single condition required to verify Vinogradov's conjecture for any given large prime $p \equiv 3 \pmod{4}$. \\
In the sequel, let $n_{\chi} := \min\{n \in \mb{N} : \chi(n) \neq 0,1\}$, and for $\chi = \left(\frac{\cdot}{p}\right)$, let $n_p := n_{\chi}$. 
\begin{cor} \label{TowardsVino}
%Let $p$ be a large prime satisfying $p \equiv 3 \pmod{4}$. If $\max_{1 \leq t\leq p} \left|\sum_{n \leq t} \left(\frac{n}{p}\right)\right| = o(\sqrt{p}\log p)$ then $n_p = p^{o(1)}$. \\
%More generally, if 
Let $p$ be a large prime and $\chi$ be a primitive odd character modulo $p$ such that $$\max_{1 \leq t \le p} \left|\sum_{n \leq t} \chi(n)\right| = o_{p\ra \infty}(\sqrt{p}\log p).$$ 
Then $\log n_{\chi} = o_{p \ra \infty}(\log p)$. 
\end{cor}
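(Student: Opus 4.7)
The strategy is a contradiction argument built directly on Corollary \ref{Qual}. Fix an arbitrary $\e > 0$ and suppose toward contradiction that $n_\chi \geq p^\e$ along an infinite sequence of large primes $p$ for which the hypothesis holds; I will show this is incompatible with the PV improvement assumed for $\chi$.

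The decisive observation exploits the primality of $p$. By the very definition of $n_\chi$ we have $\chi(n) \in \{0,1\}$ for every $n < n_\chi$, and since $p$ is prime, $\chi(n) = 0$ can only happen when $p \mid n$, which is impossible for $1 \leq n \leq \lfloor p^\e \rfloor$ once $\e < 1$ and $p$ is large. Hence $\chi(n) = 1$ identically on $[1,\lfloor p^\e \rfloor]$, so that
\[
\sum_{n \leq p^\e} \chi(n) = \lfloor p^\e \rfloor \sim p^\e.
\]
On the other hand, Corollary \ref{Qual} applied to $\chi$ (which satisfies the PV hypothesis by assumption) yields $\sum_{n \leq t}\chi(n) = o_{p \ra \infty}(t)$ for every $\e' > 0$ and every $t > p^{\e'}$. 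Taking $\e' := \e/2$ and $t := \lfloor p^\e \rfloor$ forces $\sum_{n \leq p^\e}\chi(n) = o(p^\e)$, directly contradicting the previous display. Therefore $n_\chi < p^\e$ for all sufficiently large $p$, and since $\e > 0$ was arbitrary, $\log n_\chi = o(\log p)$ as claimed.

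The main (minor) obstacle is that Corollary \ref{Qual} is phrased with the order $g$ of $\chi$ treated as fixed, so that the rate of its $o(t)$ conclusion a priori depends on $g$. For Vinogradov's conjecture itself, where $\chi$ is the Legendre symbol modulo $p \equiv 3 \pmod 4$ and $g = 2$, this is a non-issue. In the general setting, an inspection of the quantitative bound in Theorem \ref{THMGEN} shows that the $o(t)$ decay persists so long as the order of $\chi$ grows slowly enough relative to the improvement function $a(\cdot)$ (concretely, $g^2 = o(\log a(p)/\log_2 a(p))$ suffices), which comfortably covers the bounded-order regime underlying the stated application.
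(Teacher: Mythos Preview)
Your argument is correct and is precisely the deduction the paper intends: the corollary is stated as an immediate consequence of Corollary~\ref{Qual}, and your contradiction via $\sum_{n\leq p^{\e}}\chi(n)=\lfloor p^{\e}\rfloor$ (using that $p$ is prime so $\chi$ cannot vanish below $p$) is the natural way to extract it. Your caveat about the dependence on $g$ is also apt, since the paper's results are formulated for fixed order throughout (cf.\ Section~5).
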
 
\noindent The above corollary should be compared to the main result of \cite{BoGo}, in which an \emph{a priori} stronger assumption about improvements in estimates of maximal character sums of \emph{all} even quadratic characters is used to derive the above conclusion (albeit with a nice quantitative relationship between the $n_{\chi}$ and the amount of savings in P\'{o}lya-Vinogradov inequality). 
%\begin{rem}\label{CondRem}
%Assuming GRH, Corollary \ref{TowardsVino} (as well as the other results listed above) have analogues to the effect that if 
%$$\max_{1 \leq t \leq p} \left|\sum_{n \leq t} \chi(n)\right| = o(\sqrt{p}\log\log p)$$
%then one can improve upon the existing conditional estimates 
%A classical result of Linnik (see Theorem 7.16 of \cite{IK}) implies that for any $\e > 0$ and any $N$ large there are at most $O_{\e}(1)$ primes $p \in (N,2N]$ such that $n_p \gg p^{\e}$. Corollary \ref{TowardsVino} gives the corresponding result concerning maximal character sums, at least when the Legendre symbols associated to these primes are odd.
%\begin{cor} \label{Linnik}
%Let $\e > 0$ and let $N$ be large. Then the number of primes $p \in (N,2N]$ with $p \equiv 3 \pmod{4}$ and for which
%\begin{align*}
%\max_{1 \leq t \leq p} \left|\sum_{1 \leq n \leq t} \left(\frac{n}{p}\right)\right| \geq \e \sqrt{p}\log p
%\end{align*}
%is $O_{\e}(1)$. 
%\end{cor}
\begin{rem} \label{RemOddOrd}
It would be nice to be able to apply a result like Theorem \ref{THMGEN} in cases where an improvement to the P\'{o}lya-Vinogradov inequality is known to be possible. Such is the case for instance when $\chi$ has odd order, as was demonstrated in a breakthrough result of Granville and Soundararajan \cite{GrSo}. In light of refinements by Goldmakher \cite{Gold} and by Lamzouri and the author \cite{LamMan}, the P\'{o}lya-Vinogradov inequality for a character $\chi$ modulo $q$ of odd order $g \geq 3$ can be improved unconditionally in the form
$$\max_{1 \leq t \leq q} \left|\sum_{n \leq t} \chi(n) \right| \ll_{\e} \sqrt{q}(\log q)^{1-\delta_g}(\log_2 q)^{-1/4+\e}$$
for any $\e > 0$, where $\delta_g := 1-\frac{g}{\pi}\sin(\pi/g).$ \\
Unfortunately, the method of proof here relies crucially on the parity of $\chi$ being odd, and odd order characters are necessarily of even parity. At best, our proof allows us to show that if $\chi$ is a character of fixed odd order and $\psi$ is any odd character of small conductor (e.g., the Legendre symbol modulo $3$) then we have cancellation in the partial sums of $\chi\psi$ on the scale $t > q^{\e}$ for any $\e > 0$ (see Remark \ref{RemGSBurg} below). However, such a result follows directly from Corollary 1.7 of \cite{GSBurg}, which admits a relatively simple proof.
\end{rem}
\subsection{Strategy of Proof}
The idea of the proof of Theorem \ref{THMGEN} is inspired by the work of Fromm and Goldmakher \cite{FrGo}, and we explain it here. Let $\xi$ be a quadratic character with prime modulus $p$ and let $\psi$ be some auxiliary quadratic character of opposite parity (and small conductor). Assuming an ``improved P\'{o}lya-Vinogradov'' hypothesis like \eqref{PVIMP} for $\xi\psi$, it can be shown that
\begin{equation}\label{LEV}
\max_{1 \leq N \leq q} \left|\sum_{1 \leq n \leq N} \frac{\xi(n)}{n}\right| \ll \frac{\log q}{f(q)}.
\end{equation}
In \cite{FrGo}, the strategy implemented is to show that if the C\'{e}saro partial sum $\sum_{n \leq t} \xi(n)$ is $\gg t$ for some $t > q^{\e}$ then this implies that the logarithmically-averaged partial sum $\left|\sum_{1 \leq n \leq t'} \frac{\xi(n)}{n}\right|$ is prohibitively large in light of \eqref{LEV}, with $t'$ of size commensurate with $t$ (in logarithmic scale). To do this they use a fact about real-valued multiplicative functions, namely that the sizes of the logarithmic and C\'{e}saro means of a real-valued function $f$ are both controlled (in terms of both upper and lower bounds) by
$$S_f(t) := \sum_{p \leq t} \frac{1-f(p)}{p}.$$
This is implied directly by a result of Hall and Tenenbaum \cite{HaT}. Crucial use in this connection is made of the fact that the function $1 \ast f$ is non-negative, and a lower bound for this non-negative function is used crucially in the proof. \\
In this paper we shall deal with characters that are not necessarily real-valued, preventing us from directly employing the above techniques.\footnote{While the main result in \cite{HaT} is applicable to complex-valued functions as well, they are for instance not well-suited to functions taking non-zero values in roots of unity of sufficiently large order. For instance, in the notation there, the best-possible constant $K$ appearing in the estimate $t^{-1}\left|\sum_{n \leq t} g(n)\right| \ll e^{-K\text{Re}(S_g(t))}$ is, for those functions $g$ considered here, necessarily $0$ (apply (5) there with $\delta = 1$ and $\phi = 0$).} However, by invoking ideas from pretentious number theory we will be follow a more elaborate proof scheme that is similar to the one described above to arrive at the desired ends. \\
%We leverage this savings towards an estimate like \eqref{Cancel} by observing that for \emph{bounded order} Dirichlet characters $\tilde{\psi}$ modulo $m$, the following (imprecise) implication holds: for $t > \exp(\sqrt{\log m})$ sufficiently large (in terms of the order $k$ of $\psi$ alone)
%$$\left|\frac{1}{t}\sum_{n\leq t} \tilde{\psi}(n)\right| \text{ is ``large'' } \Rightarrow \left|\frac{1}{\log t}\sum_{n \leq t} \frac{\tilde{\psi}(n)}{n}\right| \text{ is ``large''.}$$
To be precise, we derive a quantitative relationship between the C\'{e}saro partial sums and the logarithmic partial sums of a multiplicative function $f$ whose non-zero values are roots of unity of some fixed order $k$; see Proposition \ref{CESTOLOG} below. Upper bounds on the log-averaged sums of $\chi$ will therefore imply, according to this relationship, that the C\'{e}saro partial sums of $\chi$ cannot be too large, even when $t \asymp_{\e} q^{\e}$ (and in fact, with $t = q^{o(1)}$, as $q \ra \infty$). \\
%allows them to bound the logarithmic mean of $f$ \emph{from below} by the C\'{e}saro mean of a \emph{non-negative} multiplicative function. Results of this kind also only depend on $S_f(t)$. \\
 %Indeed, as we shall see in Section 2, we will be able to give a quantitative relationship between the logarithmic and C\'{e}saro partial sums of multiplicative functions taking values in \emph{bounded order} roots of unity (and zero) which will depend only on the cognate quantities
%$$S_f(t) := \sum_{p \leq t} \frac{1-\text{Re}(f(p))}{p}.$$
These improved upper bounds for log-averaged sums of $\chi$ are established in Proposition \ref{KeyProp2}, which invokes the hypothesized improvement in the P\'{o}lya-Vinogradov inequality. P\'{o}lya's Fourier expansion (see Lemma 3.3 below) implies that\footnote{Here and elsewhere, given a character $\xi$ modulo $\ell$, we write the Gauss sum of $\xi$ as $\tau(\xi) := \sum_{a \pmod{\ell}} \xi(a)e^{2\pi i a/\ell}.$}
$$\max_{1 \leq t \leq q} \left|\sum_{1 \leq n \leq t} \chi(n)\right| = \frac{|\tau(\chi)|}{2\pi} \max_{\alpha \in [0,1]} \left|\sum_{1 \leq |n| \leq q} \frac{\chi(n)}{n}\left(1-e(n\alpha)\right)\right| + O(\sqrt{q}),$$
which demonstrates that the log-averaged exponential sums of $\chi$ must accordingly be small. Specializing to $\alpha = a/m$ with $m$ small in some precise sense and writing $e(na/m)$ in the basis of Dirichlet characters modulo $m$, 
it follows first that the log-averaged sum of $\chi\psi$ is small, where $\psi$ is some non-principal character with $\psi(-1) = -\chi(-1)$ (provided $m$ is chosen suitably). With some additional work (see e.g., Lemmata \ref{CASEWITHPAR} and \ref{LOGMEAN})
%This is the first step in the proof of Theorem \ref{THMGEN}. \\
%In particular, our result that the characters in question have prime moduli. \\
%To reach conclusions about partial sums of $\chi$ itself, 
this can then be used to establish the required savings\footnote{This is where the assumption in Theorem \ref{THMGEN} that $\chi$ is odd is used crucially, as we must pass from a sum of $\chi(n)/n$ over $1 \leq |n| \leq q$ to a sum over $1 \leq n \leq q$. If $\chi$ were even then the two-sided sum vanishes, and we can deduce nothing about the 1-sided sum.}
$$\max_{1 \leq N \leq q} \left|\sum_{1 \leq n \leq N} \frac{\chi(n)}{n}\right| = o(\log q).$$
%Our ``C\'{e}saro-to-logarithmic mean'' result, Proposition \ref{CESTOLOG}, can be applied again to deduce cancellation for partial sums of $\chi$ itself, at least when the prime factorization of the modulus $q$ of $\chi$ is ``tame'' (in the sense that $\sum_{p|q}1/p$ is not too large; the complementary case is easily handled by a simpler method). Unfortunately, the need to apply Proposition \ref{CESTOLOG} twice is what leads to rather poor cancellation of these short sums, but cancellation nonetheless. \\
%This part of the argument is handled in Section 3.\\
%In the case of odd order characters and Theorem \ref{THMODD}, the above can in fact be done in a much more efficient way, using a recent result of Tenenbaum \cite{Ten2}. We prove this in Section 4. \\
It should be noted that neither the bounds nor the range in $t$ in Theorem \ref{THMGEN} are expected to be optimal; it is not our main objective here to give the best such result.
\section{Relating the Logarithmic and C\'{e}saro Partial Sums of Multiplicative Functions}
In this section, we show how Lemma B of \cite{FrGo} regarding large C\'{e}saro and logarithmic sums of real-valued $1$-bounded mutliplicative functions can be adapted to deal with complex-valued Dirichlet characters of any fixed order. \\
Our first observation towards Theorem \ref{THMGEN} is the following, which will allow us to reduce matters to bounding short sums of characters whose moduli have relatively few small prime factors.
\begin{lem} \label{Reduce}
Let $\xi(t)$ be a non-decreasing function. Suppose $\chi$ is a primitive character modulo $q$, and that $\sum_{p|q} 1/p \geq \log \xi(q)$. Then for any $t > \exp(\sqrt{\log q})$ we have
\begin{align*}
\frac{1}{t}\sum_{n \leq t} \chi(n) \ll 1/\xi(t).
\end{align*}
\end{lem}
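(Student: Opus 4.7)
The plan is to exploit the fact that $\chi$ primitive modulo $q$ forces $\chi(n) = 0$ whenever $(n,q)>1$, reducing matters to a sifting estimate:
$$\left|\sum_{n\leq t}\chi(n)\right| \leq \#\{n\leq t : (n,q)=1\} \leq \#\{n\leq t : (n,P(z))=1\},$$
where $P(z) := \prod_{p|q,\,p<z} p$ for any $z\geq 2$. The hypothesis $\sum_{p|q}1/p \geq \log\xi(q)$ will force the ``density'' $\prod_{p|q}(1-1/p) \ll 1/\xi(q) \leq 1/\xi(t)$, which is exactly the target bound; the task is to convert the sifted count above into this density with acceptable error.

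To carry this out I would apply the Fundamental Lemma of sieve theory (Brun's sieve) to the sequence $\mathcal{A} = \{n\leq t\}$ with sifting primes $\{p : p|q,\ p<z\}$. At sieving level $D$ and with parameter $s = \log D/\log z$, the standard upper bound reads
$$\#\{n \leq t : (n, P(z))=1\} \leq t \prod_{p|q,\,p<z}(1-1/p)\bigl(1 + O(e^{-s})\bigr) + O(D).$$
I would take $D = t^{1/2}$ and $s = 10$, so that $z = t^{1/20}$; the sieve factor $1 + O(e^{-s})$ is then an absolute constant. Since Mertens forces $\sum_{p|q}1/p \leq \log_2 q + O(1)$, and hence $\xi(q) \ll \log q$, combined with $t > \exp(\sqrt{\log q})$ yielding $\log q \leq (\log t)^2$, we obtain $\xi(t) \leq \xi(q) \ll (\log t)^2 = o(t^{1/2})$; so the remainder is $O(D/t) = O(t^{-1/2}) = o(1/\xi(t))$.

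For the main term, the trivial bound
$$\sum_{p|q,\,p\geq z}\frac{1}{p} \leq \frac{\omega(q)}{z} \ll \frac{\log q}{t^{1/20}} = o(1)$$
holds precisely because $z = t^{1/20} \geq \exp(\sqrt{\log q}/20)$ dwarfs $\log q$; this gives $\sum_{p|q,\,p<z}1/p \geq \log\xi(q) - O(1)$. Combining this with $1-x\leq e^{-x}$ and the monotonicity of $\xi$ yields
$$\prod_{p|q,\,p<z}(1-1/p) \leq e^{O(1)}/\xi(q) \ll 1/\xi(t),$$
which together with the error analysis above completes the proof.

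The main (and rather mild) obstacle is balancing the sieve parameters: one needs $z$ to be simultaneously a small enough power of $t$ that the Fundamental Lemma delivers a clean main term, and large enough that essentially all of $\sum_{p|q}1/p$ is captured by primes $<z$. The hypothesis $t > \exp(\sqrt{\log q})$ provides exactly the slack needed to reconcile these two requirements; if one wishes to avoid citing the Fundamental Lemma, an elementary truncated Brun sieve gives the same bound with only cosmetic changes.
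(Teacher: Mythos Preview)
Your proof is correct and follows essentially the same approach as the paper: both bound $|\chi(n)|$ by the indicator of $(n,q)=1$, apply the fundamental lemma of the sieve with sifting primes $\{p\mid q,\ p<z\}$ for $z$ a small fixed power of $t$, and then use $t>\exp(\sqrt{\log q})$ to show that the primes $p\mid q$ with $p\geq z$ contribute $o(1)$ to $\sum_{p\mid q}1/p$. The only differences are cosmetic---you are more explicit about the sieve error term and the bound $\xi(q)\ll\log q$, while the paper absorbs these into the implied constant of the fundamental lemma.
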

\begin{proof}
By the fundamental lemma of the sieve, there is a large constant $C > 0$ such that
\begin{align*}
\left|\frac{1}{t}\sum_{n \leq t} \chi(n)\right| &\leq \frac{1}{t}\sum_{n \leq t} |\chi(n)| = \frac{1}{t}\sum_{n \leq t \atop (n,q) = 1} 1 \ll \prod_{p \leq t^{1/C} \atop p | q} \left(1-\frac{1}{p}\right) \ll \exp\left(-\sum_{p \leq t^{1/C} \atop p|q} \frac{1}{p}\right).
\end{align*}
Note that the number of primes $p|q$ with $p > t^{1/C}$ is at most $C\frac{\log q}{\log t} \leq C\sqrt{\log q}$, so that
\begin{align*}
\sum_{p \leq t^{1/C} \atop p|q} \frac{1}{p} \geq \sum_{p|q} \frac{1}{p} - Ct^{-1/C}\sqrt{\log q} \geq \log \xi(q) - O(1),
\end{align*}
by hypothesis. Inserting this into the above yields
\begin{align*}
\left|\frac{1}{t}\sum_{n \leq t} \chi(n)\right| \ll 1/\xi(q) \leq 1/\xi(t),
\end{align*}
as claimed.
\end{proof}
In light of Lemma \ref{Reduce}, we shall mostly focus on characters whose moduli have relatively few small prime factors. To be precise, consider the collection $\mc{S}$ of non-decreasing functions $\xi(t)$ tending to infinity with $t$, such that for all large $x$, $\log \xi(x) \asymp \log \xi(t)$ for all $t \in (\exp\left(\sqrt{\log x}\right),x]$. 
Given a map $\xi \in \mc{S}$, we
%we fix a small absolute constant\footnote{The value of $c$ will be implicitly determined according to the condition in Proposition \ref{CESTOLOG}.} $c > 0$ and 
define
$$\mc{Q}(\xi) := \left\{q \in \mb{N} : \sum_{p|q} \frac{1}{p} < \log \xi(q)\right\}.$$
Note that for any unbounded function $\xi$, the set of all large primes belong to $\mc{Q}(\xi)$. Moreover, by Markov's inequality, for any large $x$,
\begin{align*}
&|(\sqrt{x},x] \bk \mc{Q}(\xi)| \leq \frac{1}{\log \xi(\sqrt{x})} \sum_{\sqrt{x} < q \leq x} \left(\sum_{p|q} \frac{1}{p}\right) \ll \frac{1}{\log \xi(x)}\sum_{p \leq x} \frac{1}{p}\llf \frac{x}{p}\rrf \ll \frac{x}{\log \xi(x)},
\end{align*}
so that anyway $\mc{Q}(\xi)$ is a set of density 1 for all $\xi \in \mc{S}$. Moreover, we have a well-known uniform bound $\sum_{p|q} 1/p \leq \log\log\log q + O(1)$, which shows that if $\xi(q) \geq \log\log q$ then $\mc{Q}(\xi)$ consists of all sufficiently large moduli. \\
The main result of this section
%, which gives a quantitative relationship between large logarithmically-averaged partial sums and large C\'{e}saro partial sums, 
is the following. Here and elsewhere, $\mb{U}$ denotes the closed unit disc.
\begin{prop} \label{CESTOLOG}
%Let $k \geq 2$, $q \geq 1$ be integers. 
Let $k \geq 2$ and let $q$ be large. Let $\xi \in \mc{S}$ be such that $q \in \mc{Q}(\xi)$, and that 
\begin{equation}\label{QCONST}
\xi(t) \ll_{\e} (\log t)^{1/(27k^2)-\e}
\end{equation}
for all $t \geq \exp\left(\sqrt{\log q}\right)$. Let $f : \mb{N} \ra \mb{U}$ be a completely multiplicative function vanishing at precisely those primes dividing $q$, such that for each prime $p$ where $f(p) \neq 0$, we have $f(p)^k = 1$. \\
Then there is a constant $x_0 = x_0(\xi,k)$ such that the following is true. Suppose that 
\begin{equation}\label{XIHYP}
\left|\frac{1}{x}\sum_{n \leq x} f(n)\right| > 1/\xi(x),
\end{equation}
for some $x \geq \max\left\{x_0,e^{\sqrt{\log q}}\right\}$. \\
a) In general, we have
$$\max_{\sqrt{x} < y \leq x} \left|\frac{1}{\log y}\sum_{n \leq y} \frac{f(n)}{n}\right| + \frac{1}{\log x}\gg_k \xi(x)^{-38k^2\xi(x)^{19k^2}}.$$
b) If $k$ is odd then
%\footnote{This estimate will not be applied directly to problems regarding character sums in this paper, since in both Theorem \ref{THMGEN} and Theorem \ref{THMODD} the characters being summed are odd, and hence must have even order. Nevertheless, for the purpose of application to odd order multiplicative functions, this variant of the bound is rather more advantageous than that in part a).}
$$\max_{\sqrt{x} < y \leq x} \left|\frac{1}{\log y}\sum_{n \leq y} \frac{f(n)}{n}\right| + \frac{1}{\log x} \gg_k \xi(x)^{-72k^2}.$$
%\frac{1}{k^4}\xi(x)^{-8k^2} \exp\left(-O\left(k^2\sum_{p|q}\frac{1}{p}\right)\right)$$
%$$\max_{\sqrt{t} < x \leq t} \frac{1}{\log x}\sum_{1 \leq n \leq x} \frac{h(n)}{n} \gg \xi(t)^{-\frac{1}{2(2k)^2},$$
%for all $x \geq x_0$ sufficiently large in terms of $k$ alone. \\
%\xi(x)^{-\frac{8k^2}{3}\xi(x)^{(8k^2)/3}}\exp\left(-O\left(k^2\sum_{p|q}\frac{1}{p}\right)\right).$$
\end{prop}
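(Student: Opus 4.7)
The idea is to exploit pretentious methods in the spirit of Hal\'{a}sz-Granville-Soundararajan to extract arithmetic structure from the Ces\'{a}ro mean hypothesis, and then convert that structure into a lower bound for the logarithmic mean. This parallels the real-valued Fromm-Goldmakher argument, but must be executed in the Hal\'{a}sz framework since $f$ is complex-valued. Concretely, since $\left|\frac{1}{x}\sum_{n\leq x}f(n)\right|>1/\xi(x)$, a quantitative Hal\'{a}sz-type theorem should yield some $t_0\in\R$ with $|t_0|\leq T_0$ (for a free parameter $T_0$, say $T_0=\log x$) such that
$$\mb{D}(f,n^{it_0};x)^2:=\sum_{p\leq x}\frac{1-\mathrm{Re}(f(p)p^{-it_0})}{p}\ll_k \log\xi(x)+\log(1+T_0).$$

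Next I would constrain $t_0$ using the algebraic condition $f(p)^k=1$ for $p\nmid q$. The triangle inequality for $\mb{D}$, applied $k$ times together with $\sum_{p|q}1/p\leq\log\xi(q)$ (from $q\in\mc{Q}(\xi)$), yields
$$\mb{D}(1,n^{ikt_0};x)^2\leq k^2\mb{D}(f,n^{it_0};x)^2+O(\log\xi(q)).$$
Combining this with the standard lower bound $\mb{D}(1,n^{it};x)^2\gg \log(1+|t|\log x)$ for $|t|$ moderate forces $|t_0|\log x\leq \xi(x)^{C_1 k^2}$, and a further triangle-inequality step then gives $\mb{D}(f,1;y)^2\ll_k \log\xi(x)$ for all $y\in(\sqrt{x},x]$.

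To finish, I would compute the logarithmic mean via an Euler-product/partial-summation identity of the shape
$$\frac{1}{\log y}\sum_{n\leq y}\frac{f(n)}{n}=\prod_{p\leq y}\Bigl(1-\tfrac{1}{p}\Bigr)\Bigl(1-\tfrac{f(p)}{p}\Bigr)^{-1}+\mathrm{error}(y),$$
where the Euler product has modulus $\gg\exp(-\mb{D}(f,1;y)^2)\gg_k \xi(x)^{-C_2 k^2}$. For odd $k$, the only real $k$-th root of unity is $1$, so the pretentious constraint pins down $t_0$ tightly and a single pass of the argument yields the polynomial bound in (b). For even $k$, the value $f(p)=-1$ is admissible, and one cannot immediately rule out that $f$ twists against a non-trivial real character $\psi$ of bounded conductor; handling this alternative seems to require an additional iteration (essentially replacing $f$ by $f\bar\psi$ and re-running the procedure at a larger effective order), which inflates the exponent by the factor $\xi(x)^{19k^2}$ in (a).

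The principal technical hurdle is controlling $\mathrm{error}(y)$ uniformly for $y\in(\sqrt{x},x]$ so that it is dominated by the (positive but small) main term of order $\xi(x)^{-O(k^2)}$. I would handle this via a Lipschitz-type estimate for mean values of multiplicative functions \`{a} la Granville-Soundararajan, exploiting the latitude afforded by taking the $\max$ over $y$ in a dyadic-style range rather than evaluating at a single point.
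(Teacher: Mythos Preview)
Your first two steps---using Hal\'{a}sz to find $t_0$ with $\mb{D}(f,n^{it_0};x)^2\ll\log\xi(x)$, then exploiting $f(p)^k=1$ together with $q\in\mc{Q}(\xi)$ to upgrade this to $\mb{D}(f,1;y)^2\ll_k\log\xi(x)$---are essentially what the paper does in Lemmata~\ref{EQUIV} and~\ref{PRETDISTEST}. So far so good.

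The genuine gap is in the conversion step from ``$\mb{D}(f,1;y)^2$ small'' to ``logarithmic mean of $f$ large''. Your proposed identity
\[
\frac{1}{\log y}\sum_{n\leq y}\frac{f(n)}{n}=\prod_{p\leq y}\Bigl(1-\tfrac{1}{p}\Bigr)\Bigl(1-\tfrac{f(p)}{p}\Bigr)^{-1}+\mathrm{error}(y)
\]
is not a theorem with a usable error term here: you need $\mathrm{error}(y)$ to be smaller than the main term, which is only of size $\xi(x)^{-Ck^2}$, and Lipschitz-type estimates (which concern \emph{C\'{e}saro} means, not logarithmic ones, and come with additive errors of the form $(\log(x/y)/\log x)^{1-2/\pi}$) do not supply this. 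Taking a max over $y$ does not rescue the argument either, since nothing prevents the error from being uniformly of size $\xi(x)^{-ck^2}$ with the wrong sign across the whole range. The paper bypasses this obstacle entirely by introducing the \emph{non-negative} multiplicative function $g=1\ast 1\ast f\ast\bar{f}$: Lemma~\ref{REVTONN} shows via the hyperbola method that a lower bound for $x^{-1}\sum_{n\leq x}g(n)$ forces the logarithmic mean of $f$ to be large, and then one invokes quantitative lower bound theorems for partial sums of non-negative multiplicative functions (Hildebrand for part (a), Tenenbaum for part (b)) with input $\mb{D}(f,1;x)^2\ll_k\log\xi(x)$. This positivity device is the crucial idea you are missing.

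Your account of the odd/even dichotomy is also off. Having already established $\mb{D}(f,1;x)^2\ll_k\log\xi(x)$, there is no residual ``twist by a real character $\psi$'' to worry about---if $f$ were close to a non-principal quadratic $\psi$ then $\mb{D}(f,1;x)^2$ would be $\asymp\log\log x$, a contradiction. The actual reason the odd case is better is that $g(p)=4-|f(p)-1|^2$, and for odd $k$ the value $f(p)=-1$ is excluded, so $g(p)\geq 2\pi^2/(3k^2)$ is bounded away from $0$; this is precisely the hypothesis needed for Tenenbaum's sharper lower bound. For even $k$ one can have $g(p)=0$, and then only Hildebrand's theorem (involving the Dickman function) applies, producing the doubly-exponential loss in (a). No iteration or character twist is involved.
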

The proof proceeds by showing that the logarithmic partial sums of $f$ can be minorized by the C\'{e}saro average of a certain non-negative multiplicative function $g$ to be constructed below. Some work will then be done to show that when $\xi(x)$ is not too large the partial sums of $g$ are sufficiently large for this lower bound to be useful. \\
Our first lemma shows how to construct this function $g$ for the complex-valued functions that we shall consider.
%of a non-negative multiplicative function attached to a real-valued $f$ can be adapted to certain \emph{complex}-valued multiplicative functions.
\begin{lem}\label{CONVPOS}
Let $f: \mb{N} \ra \mb{U}$ be completely multiplicative, such that $f(p) \in S^1 \cup \{0\}$ for all primes $p$, and let $g := 1 \ast 1 \ast f \ast \bar{f}$. 
%For each prime $p$ write $f(p) = r_pe(\theta_p)$, with $\theta_p \in [0,1)$. 
Then $g$ is a non-negative multiplicative function.
%$$g(p^k) = (k+1)\cdot \begin{cases} K_{k+1}(\theta_p) &\text{ if $r_p = 1$} \\ 1 &\text{ if $r_0 = 0$,}\end{cases}$$
%where $K_{k+1}$ denotes the order $k+1$ F\'{e}jer kernel $K_N(\theta) := \sum_{|j| \leq N} (1-|j|/N)e(j\theta).$ In particular, $g$ is a non-negative multiplicative function.
%, and $A_t$ denotes the Poisson kernel with parameter $t$. In particular, $g$ is a multiplicative function satisfying $g \geq 0$.
\end{lem}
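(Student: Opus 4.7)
The plan is to verify non-negativity prime-power by prime-power. Since $1$, $f$, and $\bar{f}$ are all multiplicative, their Dirichlet convolution $g$ is automatically multiplicative, so it suffices to show $g(p^n) \geq 0$ for every prime $p$ and every integer $n \geq 0$. Fixing $p$, the $p$-Euler factor of $g$ admits the generating function
$$\sum_{n \geq 0} g(p^n)\, z^n \;=\; \frac{1}{(1-z)^2\,(1 - f(p)\, z)(1 - \bar{f}(p)\, z)}.$$

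I would dispose of the degenerate cases immediately. If $f(p) = 0$, then by complete multiplicativity $f(p^k) = 0$ for every $k \geq 1$, so the generating function collapses to $(1-z)^{-2}$ and $g(p^n) = n+1 \geq 0$. If $f(p) = 1$, the generating function is $(1-z)^{-4}$, whence $g(p^n) = \binom{n+3}{3} \geq 0$. Both are manifestly non-negative.

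The main case is $f(p) = e^{i\theta}$ with $\theta \not\equiv 0 \pmod{2\pi}$. Using the standard identity $(1 - 2\cos\theta\, z + z^2)^{-1} = \sum_n U_n(\cos\theta)\, z^n$, where $U_n$ denotes the Chebyshev polynomial of the second kind, one obtains $(f \ast \bar{f})(p^n) = U_n(\cos\theta) = \sin((n+1)\theta)/\sin\theta$. Convolving once more with $1 \ast 1$ yields
$$g(p^n) \;=\; \sum_{k=0}^{n} (n-k+1)\,U_k(\cos\theta).$$
My strategy is to evaluate this sum by one Abel summation step, followed by the product-to-sum identities $\sin A \sin B = \tfrac{1}{2}[\cos(A-B) - \cos(A+B)]$ and $\sin A \cos B = \tfrac{1}{2}[\sin(A+B) + \sin(A-B)]$, together with the closed forms for the finite geometric sums $\sum_{j=1}^{N}\sin(j\theta)$ and $\sum_{j=0}^{N}\cos((2j+3)\theta/2)$. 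After a bit of cancellation the expression telescopes to the clean closed form
$$g(p^n) \;=\; \frac{(n+2) - U_{n+1}(\cos\theta)}{4\sin^2(\theta/2)}.$$
Non-negativity is then immediate from the elementary bound $|U_m(\cos\theta)| \leq m+1$, equivalently $|\sin(m\phi)| \leq m\,|\sin\phi|$ for non-negative integers $m$ (easily proved by induction on $m$ using the angle-addition formula), which forces $U_{n+1}(\cos\theta) \leq n+2$.

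The only mildly delicate step is the trigonometric book-keeping required to reach the closed form; once that is in hand, non-negativity of $g(p^n)$, and hence of $g$ on all of $\N$ by multiplicativity, is self-evident.
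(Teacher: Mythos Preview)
Your argument is correct. The closed form
\[
g(p^n)\;=\;\frac{(n+2)-U_{n+1}(\cos\theta)}{4\sin^2(\theta/2)}
\]
checks out (one clean verification is to compare generating functions in $z$ on both sides), and non-negativity then follows at once from $|U_m(\cos\theta)|\le m+1$, i.e.\ from $|\sin(m\phi)|\le m\,|\sin\phi|$.

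The paper takes a different, more structural route. Writing $h:=1\ast f$ so that $g=h\ast\bar h$, it expands $g(p^k)$ as a double sum over exponents of $f(p)$ and $\bar f(p)$ and recognises the result as a Fej\'er kernel $K_{k+1}$ evaluated at the argument of $f(p)$; non-negativity is then immediate from the standard closed form $K_N(t)=N^{-1}\bigl(\sin(\pi Nt)/\sin(\pi t)\bigr)^2$. Your approach via Chebyshev polynomials is more computational and needs the Abel-summation/trig bookkeeping you describe, but it delivers an explicit scalar formula for $g(p^n)$ rather than merely a non-negativity certificate, which could be handy if one later wants quantitative lower bounds on $g(p^n)$. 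The two arguments are close cousins --- Fej\'er-kernel positivity and the inequality $|\sin(m\phi)|\le m\,|\sin\phi|$ encode essentially the same fact --- but the presentations are genuinely different.
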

\begin{proof}
%We assume to start with that $r_p = 1$; we shall show below that we may reduce to this case. 
That $g$ is multiplicative is immediate, given that it is defined from convolutions of multiplicative functions. We shall thus prove that for any prime power $p^k$, we have $g(p^k) \geq 0$. \\
Assume first that $f(p) \neq 0$, so that $|f(p)| = 1$. Write $h := 1 \ast f$. Then $g = h \ast \bar{h}$, and so we have
%Write $h := 1 \ast f$, so that $g := h \ast \bar{h}$. Let $p$ be a prime. If $f(p) = 0$ then $h(p^j) = 1$ for all $j \geq 0$, and hence $g(p^k) = \sum_{0 \leq j \leq k} h(p^j) \bar{h(p^{k-j})} = k+1 \geq 0$. \\
%Now assume that $f(p) \neq 0$. For such primes, write $f(p) = r_pe(\theta_p)$, and set $\tilde{f}(p) := f(p)/r_p = e(\theta_p)$, $\tilde{h}:= 1\ast \tilde{f}$ and $\tilde{g} := \tilde{h} \ast \bar{\tilde{h}}$. We recall that for $r \in (0,1)$, the Poisson kernel 
%$$P_r(t) := \sum_{m \in \mb{Z}} r^{|m|}e(mt) = \frac{1-r^2}{1-2r\cos\theta + r^2} \geq 0.$$
%This has the property that if $Q(t) := \sum_{|m| \leq M} a_me(mt)$ is a trigonometric polynomial then $P_r \ast Q(t) = \sum_{|m| \leq M} a_mr^{|m|}e(mt)$. Now, writing $D_N'(t) := \sum_{0 \leq j \leq N} e(jt)$, for each $j \geq 1$ we have
%$$h(p^j) = \sum_{0 \leq u \leq j} f(p)^j = \sum_{0 \leq u \leq j} r_p^j\tilde{f}(p)^j = P_{r_p} \ast D_j'(\theta_p).$$
%It follows that 
%$$g(p^k) = \int_0^1\int_0^1 P_{r_p}(\theta_p-t)P_{r_p}(\theta_p-t') \left(\sum_{0 \leq j \leq k} D_j'(t)\bar{D}_{k-j}'(t')\right) dtdt'.$$
%Since $P_{r_p}(\theta_p-t)P_{r_p}(\theta_p-t') \geq 0$ for all pairs $(t,t') \in [0,1]^2$, and moreover that
%$$\sum_{0 \leq j \leq k} D_j'(t
%it suffices 
\begin{align*}
g(p^k) &= \sum_{0 \leq j \leq k} h(p^j) \bar{h(p^{k-j})} = \sum_{0 \leq j \leq k} \left(\sum_{0 \leq u \leq j} f(p)^u\right)\left(\sum_{0 \leq v \leq k-j} f(p)^{-v}\right) = \sum_{0 \leq u,v \leq k} f(p)^{u-v} \sum_{u \leq j \leq k-v} 1 \\
&= \sum_{0 \leq u,v \leq k} f(p)^{u-v} (k+1-(u+v)) = \sum_{|w| \leq k} f(p)^w \sum_{\max\{0,-w\} \leq v \leq \min\{k,k-w\}}(k+1 - w-2v).
\end{align*}
It is easily checked that the inner sum is $k+1-|w|$, for all $|w| \leq k$. 
%If $w \geq 0$ then the range of the inner sum is $0 \leq v \leq k-w$, and executing the summation yields
%$$\sum_{0 \leq v \leq k-w} (k+1-w-2v) = (k+1-w)^2 - 2\sum_{0 \leq v \leq k-w} v = k+1-w.$$
%When $w < 0$, making the change of variables $v' := v+w$ with $-w\leq v \leq k$ results in
%$$\sum_{0 \leq v' \leq k-|w|} (k+1-|w|-2v') = k+1-|w|,$$
%by the previous calculation. 
Upon writing\footnote{As is standard, given $t \in \mb{R}$ we write $e(t) := e^{2\pi i t}$.} $f(p) = e(\theta_p)$, this gives
$$g(p^k) = \sum_{|w| \leq k} f(p)^w (k+1-|w|) = (k+1)K_{k+1}(\theta_p),$$
where for $N \in \mb{N}$, $$K_N(t) := \sum_{|j| \leq N-1} (1-|j|/N)e(jt) = \frac{1}{N}\left(\frac{\sin(\pi Nt)}{\sin(\pi t)}\right)^2$$ denotes the order $N$ F\'{e}jer kernel. This establishes that $g(p^k) \geq 0$ for all $k$ whenever $|f(p)| = 1$.\\
% since the F\'{e}jer kernel $K_{k+1}$ is well-known to be non-negative.\\
%, since the map $A_1\ast F = F$ for any finite trigonometric polynomial $F$. \\
Now, suppose that $f(p) = 0$. Then $h(p^k) = 1$ for all $k\geq 0$, and thus 
$$g(p^k) = \sum_{0 \leq j \leq k} h(p^j)\bar{h(p^{k-j})} = k+1.$$ 
This completes the proof in the case $f(p) = 0$. The claim follows.
%Now, as $g$ is defined by convolutions of multiplicative functions, it is itself multiplicative. Now, the F\'{e}jer kernel is a non-negative function, so that $g(p^k) \geq 0$ for all prime powers $p^k$. The multiplicativity of $g$ then implies that $g(n) \geq 0$ for all $n\in \mb{N}$, and the claim follows.
\end{proof}
% = \sum_{|w| \leq k} f(p)^w \left((k+1-|w|)^2 - 2\cdot \frac{1}{2}(k+1-|w|)(k-|w|)\right) = \sum_{|w| \leq k} (k+1-|w|) f(p)^w = (k+1)K_{k+1}(\text{arg}(f(p))).$$
%According to Lemma \ref{CONVPOS}, $g$ is non-negative, multiplicative and divisor-bounded (i.e., $0 \leq g \leq \tau_4$). 
%We next show that the logarithmic partial sums of $f$ at a scale commensurate to $x$ are bounded from below by C\'{e}saro sums of $g = 1\ast 1 \ast f\ast \bar{f}$. Provided $f(p)$ is suitably close to $1$ on average over primes, this will provide a useful lower bound.  
\begin{lem} \label{REVTONN}
Let $f : \mb{N} \ra \mb{U}$ be completely multiplicative and such that for all primes $p$ we have $f(p) \in S^1 \cup \{0\}$. Then there is an absolute constant $t_0 \geq 2$ such that for any $t \geq t_0$,
$$\max_{\sqrt{t} \leq x \leq t} \left|\frac{1}{\log x}\sum_{n \leq x} \frac{f(n)}{n}\right| + \frac{1}{\log t}\gg \frac{1}{(\log t)^3} \frac{1}{t}\sum_{n \leq t} g(n),$$
where $g = 1 \ast 1 \ast f \ast \bar{f}$.
\end{lem}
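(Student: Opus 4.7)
The plan is to exploit the factorization $g = h \ast \bar{h}$ where $h := 1 \ast f$, and to apply the Dirichlet hyperbola method with cutoff at $\sqrt{t}$. Writing $H(y) := \sum_{n \le y} h(n)$, this produces the identity
$$S := \sum_{n \le t} g(n) = \sum_{ab \le t} h(a)\bar{h(b)} = 2\,\text{Re}\sum_{a \le \sqrt{t}} h(a)\,\bar{H}(t/a) - |H(\sqrt{t})|^2.$$
The next step is to swap the divisor sum defining $h$ so as to convert $H$ into a truncation of $L(y) := \sum_{n \le y} f(n)/n$: by the standard manipulation
$$H(y) = \sum_{d \le y} f(d) \lfloor y/d \rfloor = yL(y) + O(y),$$
where the $O(y)$ error uses only $|f(d)| \le 1$.

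Substituting this asymptotic into the main term gives
$$2\,\text{Re}\sum_{a \le \sqrt{t}} h(a)\,\bar{H}(t/a) = 2t\,\text{Re}\sum_{a \le \sqrt{t}} \frac{h(a)\,\bar{L(t/a)}}{a} + O\Big(t \sum_{a \le \sqrt{t}} \frac{|h(a)|}{a}\Big).$$
The key observation is that when $a \le \sqrt{t}$ we have $t/a \in [\sqrt{t}, t]$, so if $M := \max_{\sqrt{t} \le x \le t} |L(x)|/\log x$ then $|L(t/a)| \le M \log(t/a) \le M\log t$. Combined with the bound $|h(n)| \le d(n)$ (immediate from complete multiplicativity and $|f(p)| \le 1$) and the classical estimate $\sum_{a \le z} d(a)/a \ll (\log z)^2$, the main term is controlled by $\ll tM(\log t)^3$.

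It then remains to verify that the remaining error sources each contribute only $O(t(\log t)^2)$, so that after dividing by $t(\log t)^3$ they are absorbed into the $1/\log t$ slack on the left-hand side. The $O$-term just displayed is already $\ll t(\log t)^2$ by the same divisor-sum estimate. The boundary term is controlled by the trivial bound $|L(\sqrt{t})| \le \log\sqrt{t} + 1 \ll \log t$, which gives $|H(\sqrt{t})| \ll \sqrt{t}\log t$ and hence $|H(\sqrt{t})|^2 \ll t(\log t)^2$. Assembling everything, $S \ll tM(\log t)^3 + t(\log t)^2$, and dividing through by $t(\log t)^3$ produces the claimed inequality. The argument is not structurally difficult; the main delicate point is confirming that each error source---the floor-function discrepancy in $H(y)$, the divisor-sum estimate, and the boundary contribution---produces only $O(1/\log t)$ and not an absolute constant after the final normalization by $t(\log t)^3$, which is exactly what dictates the precise shape of the conclusion.
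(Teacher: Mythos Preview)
Your proof is correct and follows essentially the same approach as the paper's: both set $h=1\ast f$, apply the hyperbola method with cutoff $\sqrt t$ to write $\sum_{n\le t}g(n)$ in terms of $H(y)=\sum_{n\le y}h(n)$, use the relation $H(y)=yL(y)+O(y)$ with $L(y)=\sum_{n\le y}f(n)/n$, and control the remaining sums via $|h|\le\tau$ and $\sum_{m\le\sqrt t}\tau(m)/m\ll(\log t)^2$. The only cosmetic difference is ordering: the paper first records $H(x)/x=L(x)+O(1)$ and then applies the hyperbola bound, whereas you substitute inside the hyperbola sum directly; the paper also simply drops the nonpositive term $-|H(\sqrt t)|^2$ rather than bounding it, but your explicit estimate $|H(\sqrt t)|^2\ll t(\log t)^2$ achieves the same end.
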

\begin{proof}
As in the proof of the previous lemma, set $h := 1 \ast f$, so that $g = h\ast \bar{h}$. We will bound the partial sums of $h$ in two ways.
% and for prime powers $p^k$, we have
%$$g(p^k) = \sum_{0 \leq j \leq k} f(p)^j\bar{f(p)}^{k-j} = 
%$g \geq 0$ and is multiplicative. 
Let $t$ be sufficiently large. On one hand, for any $\sqrt{t} < x \leq t$ we have
\begin{equation}\label{TRIV}
\frac{1}{x}\sum_{n \leq x} h(n) = \frac{1}{x}\sum_{md\leq x} f(m) = \frac{1}{x}\sum_{m \leq x} f(m)\left(\frac{x}{m} + O(1)\right) = \sum_{m \leq x} \frac{f(m)}{m} + O(1).
\end{equation}
%Thus, we have
%$$\frac{1}{\sqrt{q}}\max_{1 \leq t \leq q} \left|\sum_{n \leq t} \chi(n)\right| \geq \max_{1 \leq t \leq q} \left|\frac{1}{t}\sum_{n \leq t} f(n)\right| - O\left(\frac{\log q}{\sqrt{q}} + \frac{\sqrt{m}}{\phi(m)}\right).$$
Now, the previous lemma shows that $g$ is multiplicative and non-negative. Thus, applying the hyperbola method, we get on the other hand that 
\begin{align*}
\frac{1}{t}\sum_{n \leq t} g(n) &= 2\text{Re}\left(\sum_{m \leq \sqrt{t}} \frac{\bar{h(m)}}{m} \cdot \frac{m}{t}\sum_{n \leq t/m} h(n)\right) - \left|\frac{1}{\sqrt{t}}\sum_{n \leq \sqrt{t}} h(n)\right|^2 \\
&\leq 2\left(\sum_{m \leq \sqrt{t}}\frac{|h(m)|}{m}\right) \cdot \max_{\sqrt{t} \leq x \leq t} \left|\frac{1}{x}\sum_{n \leq x} h(n)\right|.
\end{align*}
Since $|h| \leq \tau$, we easily get
$$\sum_{m \leq \sqrt{t}} \frac{|h(m)|}{m} \leq \sum_{P^+(m) \leq \sqrt{t}} \frac{|h(m)|}{m} \ll \exp\left(\sum_{p \leq t} \frac{|h(p)|}{p}\right) \ll (\log t)^2.$$
As such, we get that
$$\max_{\sqrt{t} < x \leq t}\left|\frac{1}{x}\sum_{n \leq x} h(n)\right| \gg \frac{1}{(\log t)^2} \frac{1}{t}\sum_{n \leq t} g(n).$$
Combining this with \eqref{TRIV}, and dividing both sides by $\log t$, proves the claim.
\end{proof}
In order to establish Proposition \ref{CESTOLOG}, we will need to be able to show that the partial sums of $g$ are not too small. To this end, we will make use of the notion of pretentiousness of multiplicative functions, due originally to Granville and Soundararajan (for a detailed account of this theory, see \cite{GrSoBOOK}). A key object in this connection is the so-called \emph{pretentious distance}, which we define as follows. For $f_1,f_2 : \mb{N} \ra \mb{U}$ multiplicative and $x \geq 2$, set
$$\mb{D}(f_1,f_2;x) := \left(\sum_{p \leq x} \frac{1-\text{Re}(f_1(p)\bar{f_2(p)})}{p}\right)^{\frac{1}{2}}.$$
Moreover, for $T \geq 0$, we define\footnote{Henceforth, for convenience, given $t \in \mb{R}$ we will write $n^{it}$ to denote the multiplicative function mapping $n \mapsto n^{it}$.} $$\mc{D}_f(x;T) := \min_{|t| \leq T}\mb{D}(f,n\mapsto n^{it};x)^2.$$
Roughly speaking, a hypothesis like \eqref{XIHYP} implies (via Hal\'{a}sz' theorem, see e.g., \eqref{HMT} below) that $\mc{D}_f(x;(\log x)^2)$ is small (in a manner depending on $\xi$), and thus $f$ is $n^{it_0}$-pretentious for some $t_0 \in [-(\log x)^2,(\log x)^2]$. An important feature of multiplicative functions whose values are bounded order roots of unity is that we can guarantee that $t_0 = 0$. The following lemma gives a quantitative relation in this direction.
% in the particular case that $h$ has bounded order whenever it is non-vanishing (in particular, $t_0 = 0$ here).
\begin{lem} \label{EQUIV}
Let $k \in \mb{N}$ and let $1 \leq k,T \leq x$. For any multiplicative function\footnote{Given a positive integer $m$, we write $\mu_m$ to denote the set of $m$th order roots of unity.} $f : \mb{N} \ra \mu_k$,
\begin{equation*}
\min_{|t| \leq T} \mb{D}(f,n^{it};x) \geq \frac{1}{2k}\min\left\{\sqrt{\log_2 x}, \mb{D}(f,1;x)\right\} - O(1).
\end{equation*}
\end{lem}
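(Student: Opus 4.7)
My approach exploits the constraint $f^k \equiv 1$ (which holds since $f(\mb{N}) \subseteq \mu_k$) together with the triangle inequality for pretentious distance. The strategy is that the iterated triangle inequality relates $\mb{D}(f, n^{it}; x)$ to $\mb{D}(1, n^{ikt}; x)$, and then standard PNT-based bounds on $\mb{D}(1, n^{is}; x)$ force the latter to be large unless $s$ is very small.

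First, I would apply the standard inequality $\mb{D}(f_1 f_2, g_1 g_2; x) \leq \mb{D}(f_1, g_1; x) + \mb{D}(f_2, g_2; x)$ inductively with $f_i := f$ and $g_i := n^{it}$ to obtain
$$\mb{D}(1, n^{ikt}; x) \;=\; \mb{D}(f^k, n^{ikt}; x) \;\leq\; k\, \mb{D}(f, n^{it}; x).$$
Next, I would invoke (or reprove via PNT with classical error term and partial summation) the standard formula valid for $|s| \leq x$: with $u := |s|\log x$,
$$\mb{D}(1, n^{is}; x)^2 \;=\; \log\min\bigl(\log x,\, \max(1, u)\bigr) + O(1).$$
Specialized, this gives $\mb{D}(1, n^{is}; x) = O(1)$ when $|s| \leq 1/\log x$, and $\mb{D}(1, n^{is}; x) \geq \sqrt{\log_2 x} - O(1)$ when $|s| \geq 1$; in the intermediate window $1/\log x \leq |s| \leq 1$, $\mb{D}(1, n^{is}; x)^2 = \log(|s|\log x) + O(1)$.

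Let $t_0 \in [-T, T]$ attain the minimum of $\mb{D}(f, n^{it}; x)$, and write $E := \mb{D}(f, n^{it_0}; x)$, $D := \mb{D}(f, 1; x)$, $A := \mb{D}(1, n^{it_0}; x)$, $B := \mb{D}(1, n^{ikt_0}; x)$. The two estimates $E \geq B/k$ (from Step 1) and $E \geq D - A$ (from the triangle inequality) are combined via a case split on $|t_0|$. If $|t_0| \leq 1/\log x$ then $A = O(1)$, giving $E \geq D - O(1)$. If $|kt_0| \geq 1$ then $B \geq \sqrt{\log_2 x} - O(1)$, giving $E \geq \sqrt{\log_2 x}/k - O(1/k)$. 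In the intermediate regime $1/\log x < |t_0| < 1/k$, one has $A^2 = \log(|t_0|\log x) + O(1)$ and $B^2 = \log(k|t_0|\log x) + O(1)$, so $A \leq B$ up to $O(1)$; a short subcase analysis according to whether $D \leq 2B$ or $D > 2B$ (using $E \geq B/k$ in the first subcase and $E \geq D - A \geq D - B$ in the second) delivers $E \geq D/(2k) - O(1)$. Assembling all three regimes yields the claimed bound $E \geq \frac{1}{2k}\min\{\sqrt{\log_2 x}, D\} - O(1)$.

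The main obstacle is the intermediate regime, where $A$ and $B$ are comparable and of moderate size, so that neither the triangle bound $E \geq D - A$ nor the iterated bound $E \geq B/k$ alone dominates. The factor $1/(2k)$ (rather than $1/k$) in the statement provides precisely the slack needed for the balance between $D - B$ and $B/k$ to cover all cases cleanly.
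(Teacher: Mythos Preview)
The paper does not prove this lemma directly; it simply cites Lemma 3.1 of \cite{Rig2} and remarks that the $O(1)$ there may be taken absolute. Your argument is the standard one and is almost certainly what appears in that reference: exploit $f^k\equiv 1$ so that the iterated triangle inequality gives $\mb{D}(1,n^{ikt};x)\le k\,\mb{D}(f,n^{it};x)$, and then play this off against $\mb{D}(f,n^{it};x)\ge \mb{D}(f,1;x)-\mb{D}(1,n^{it};x)$ via a case split on the size of $|t_0|$. The intermediate regime and the balancing with the slack factor $1/(2k)$ are handled exactly right.

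There is one point that needs more care. You assert that $\mb{D}(1,n^{is};x)^2=\log_2 x+O(1)$ uniformly for $1\le |s|\le x$, and propose to derive this from ``PNT with classical error term and partial summation''. Partial summation from $\pi(y)=\text{li}(y)+R(y)$ produces an error of size $(1+|s|)\int_2^\infty |R(y)|y^{-2}\,dy\ll 1+|s|$, which is useless for large $|s|$; equivalently, via $\sum_{p\le x}p^{-1-is}=\log\zeta(1+1/\log x+is)+O(1)$ one only has the unconditional bound $|\log|\zeta(1+is)||\le \log\log(2+|s|)+O(1)$, so in general $\mb{D}(1,n^{is};x)^2\ge \log_2 x-\log\log(2+|s|)-O(1)$. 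For $|kt_0|$ close to $kT\le x^2$ this lower bound is vacuous, and Case~2 as written fails. The fix is painless: invoking the Vinogradov--Korobov bound $|\zeta(1+iu)|\ll(\log|u|)^{2/3+o(1)}$ gives $\mb{D}(1,n^{ikt_0};x)^2\ge \tfrac{1}{3}\log_2 x-o(\log_2 x)$ for $|kt_0|\le x^{O(1)}$, hence $B/k\ge \tfrac{1}{k\sqrt{3}}\sqrt{\log_2 x}-o(1)$, and since $1/\sqrt{3}>1/2$ this is $\ge \tfrac{1}{2k}\sqrt{\log_2 x}-O(1)$ as needed. (Alternatively, note that the only use of the lemma in the paper is with $T=\log x$, where $|kt_0|\le k\log x$ and the classical bound already gives $\log\log|kt_0|\ll\log_3 x$, which is harmless.)
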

\begin{proof}
This is Lemma 3.1 in \cite{Rig2}. In the statement there, the $O(1)$ term depends on $k$, but following the proof it is easy to verify that it can in fact be taken absolute.
\end{proof}
\begin{lem}\label{PRETDISTEST}
Let $k \geq 2$, let $\xi$ be as in the statement of Proposition \ref{CESTOLOG}, and suppose that $q \in \mc{Q}(\xi)$. Let $f: \mb{N} \ra \mu_k \cup \{0\}$ be a completely multiplicative function such that $f(p) = 0$ if, and only if, $p|q$. 
%Finally, assume that
%$$\frac{1}{x}\
Then there is an $x_0 = x_0(\xi,k)$ such that the following holds. If $x \geq \max\{x_0,e^{\sqrt{\log q}}\}$ and
$$\left|\frac{1}{x}\sum_{n \leq x} f(n)\right| > \frac{1}{\xi(x)},$$
then we have
$$\mb{D}(f,1;x)^2 \leq 27k^2\left(\log (\xi(x)\log \xi(x)) +O(1)\right).$$
\end{lem}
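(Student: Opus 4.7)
The plan is to translate the hypothesized lower bound on the mean value of $f$ into an upper bound on the minimum pretentious distance $\mc{D}_f(x;T)$ via a quantitative Hal\'asz inequality, and then to remove the twist by $n^{it}$ by invoking Lemma \ref{EQUIV}; the constraint \eqref{QCONST} on the growth of $\xi$ is precisely what will be needed to discard the trivial alternative in Lemma \ref{EQUIV}.

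First I would apply the quantitative Hal\'asz theorem (the result referenced as \eqref{HMT} in the paper) with parameter $T = (\log x)^2$, which produces
$$\frac{1}{\xi(x)} < \left|\frac{1}{x}\sum_{n \leq x} f(n)\right| \ll \bigl(1 + \mc{D}_f(x;T)\bigr)\,e^{-\mc{D}_f(x;T)} + \frac{1}{\sqrt{T}},$$
where the $1/\sqrt{T} \asymp 1/\log x$ term is negligible compared to $1/\xi(x)$ under \eqref{QCONST}. Writing $M := \mc{D}_f(x;T)$ and solving $(1+M)e^{-M} \gg 1/\xi(x)$ by a one-step bootstrap then yields
$$M \leq \log\xi(x) + \log\log\xi(x) + O(1) = \log\bigl(\xi(x)\log\xi(x)\bigr) + O(1).$$

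Next I would feed this bound into Lemma \ref{EQUIV}. One preliminary caveat: Lemma \ref{EQUIV} is stated for $\mu_k$-valued functions, whereas our $f$ vanishes on primes $p\mid q$. I would replace $f$ by the auxiliary completely multiplicative $\mu_k$-valued function $\tilde{f}$ that agrees with $f$ off the primes dividing $q$ and equals $1$ on those primes. Since $\mb{D}(f,\cdot;x)^2$ and $\mb{D}(\tilde{f},\cdot;x)^2$ differ by $O\bigl(\sum_{p\mid q}1/p\bigr) = O(\log\xi(x))$ thanks to $q \in \mc{Q}(\xi)$, this swap is essentially free. Lemma \ref{EQUIV} then produces
$$\sqrt{M} + O(1) \;\geq\; \min_{|t| \leq T}\mb{D}(\tilde{f},n^{it};x) \;\geq\; \frac{1}{2k}\min\bigl\{\sqrt{\log_2 x},\, \mb{D}(\tilde{f},1;x)\bigr\} - O(1),$$
and squaring carefully (using $(a+b)^2 \leq (1+\eta)a^2 + O_\eta(1)$ to absorb cross terms) one arrives at
$$\min\bigl\{\log_2 x,\, \mb{D}(f,1;x)^2\bigr\} \;\leq\; C k^2\bigl(\log\bigl(\xi(x)\log\xi(x)\bigr) + O(1)\bigr)$$
for some absolute constant $C$.

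Finally, the constraint \eqref{QCONST}, namely $\xi(x) \ll (\log x)^{1/(27k^2)-\e}$, is calibrated exactly so that the right-hand side above is strictly less than $\log_2 x$ for $x \geq x_0(\xi,k)$; this forces the minimum on the left to be realized by $\mb{D}(f,1;x)^2$, and absorbing the absolute constant $C$ into the factor $27$ gives the claimed inequality. The main obstacle I anticipate is bookkeeping: one must track the constants sharply through (i) the Hal\'asz bootstrap, (ii) the swap $f\leftrightarrow\tilde f$ accounting for the ramified primes, and (iii) the squaring of the $2k$ factor in Lemma \ref{EQUIV}, so that after combining them the final coefficient on $\log\bigl(\xi(x)\log\xi(x)\bigr)$ does not exceed $27k^2$ and the use of \eqref{QCONST} really does rule out the $\sqrt{\log_2 x}$ alternative. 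Once the constants are pinned down, the proof is essentially a chaining of Hal\'asz with Lemma \ref{EQUIV}.
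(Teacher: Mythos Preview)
Your proposal is correct and follows essentially the same route as the paper: Hal\'asz--Montgomery--Tenenbaum combined with the swap $f\leftrightarrow\tilde f$ (to make Lemma \ref{EQUIV} applicable) and then the use of \eqref{QCONST} to discard the $\sqrt{\log_2 x}$ alternative. The only cosmetic difference is the order of operations: the paper first invokes Lemma \ref{EQUIV} (together with the triangle inequality for $\mb{D}$) to obtain a lower bound $\mc{D}_f(x;T)\geq \tfrac{1}{8k^2}\min\{\log_2 x,\mb{D}(f,1;x)^2\}-2\sum_{p\mid q}1/p-O(1)$ and then inserts this into \eqref{HMT}, whereas you extract $M=\mc{D}_f(x;T)\leq \log(\xi\log\xi)+O(1)$ from \eqref{HMT} first and only afterwards apply Lemma \ref{EQUIV}. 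Either order works; the paper's choice makes the constant tracking slightly more transparent (it lands at $8.5k^2\cdot 3 \leq 27k^2$ after using $\sum_{p\mid q}1/p<\log\xi(x)$). One minor slip in your write-up: in the displayed chain $\sqrt{M}+O(1)\geq \min_{|t|\leq T}\mb{D}(\tilde f,n^{it};x)$, the error from the swap is $(\sum_{p\mid q}1/p)^{1/2}=O(\sqrt{\log\xi(x)})$, not $O(1)$; you state this correctly in the preceding sentence, so it is only a notational inconsistency and does not affect the argument.
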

\begin{proof}
The result is immediate if $\mb{D}(f,1;x)^2 \leq 200k^2$, so in the sequel we assume that $\mb{D}(f,1;x)^2 > 200k^2$. \\
By the Hal\'{a}sz-Montgomery-Tenenbaum inequality (see Theorem III.4.6 in \cite{Ten}), for any $T \geq 1$ we have
\begin{equation}\label{HMT}
\frac{1}{x}\sum_{n \leq x} f(n) \ll \mc{D}_f(x;T)e^{-\mc{D}_f(x;T)} + \frac{1}{\sqrt{T}}.
\end{equation}
%where we set 
%$$\mc{D}_h(x;T) := \min_{|t| \leq T} \mb{D}(h,n^{it};x)^2 = \min_{|t| \leq T} \sum_{p \leq x} \frac{1-\text{Re}(h(p)p^{-it})}{p}.$$
Let $T=\log x$. Let $\tilde{f}: \mb{N} \ra S^1$ be the completely multiplicative function defined on primes by 
$$\tilde{f}(p) := \begin{cases} f(p) &\text{ if $p\nmid q$}\\ 1 &\text{ if $p|q$;}\end{cases}$$ 
note that $\tilde{f}^k = 1$. For any fixed $t \in [-T,T]$, the triangle inequality for $\mb{D}$ gives
\begin{equation}\label{TRINQ}
\mb{D}(f,n^{it};x) \geq \mb{D}(\tilde{f},n^{it};x) - \mb{D}(f,\tilde{f};x) = \mb{D}(\tilde{f},n^{it};x) - \left(\sum_{p|q} \frac{1}{p}\right)^{\frac{1}{2}},
\end{equation}
while we trivially have
\begin{align} \label{TRIVQ}
\mb{D}(\tilde{f},n^{it};x) = \left(\sum_{p \leq x} \frac{1-\text{Re}(f(p)p^{-it})}{p} - \sum_{p\leq x \atop p|q} \frac{\cos(t\log p)}{p}\right)^{\frac{1}{2}} \geq \mb{D}(f,n^{it};x) - \left(\sum_{p|q}\frac{1}{p}\right)^{\frac{1}{2}}.
\end{align}
By Lemma \ref{EQUIV}, we have
$$\min_{|t| \leq T} \mb{D}(\tilde{f},n^{it};x) \geq \frac{1}{2k} \min\{\sqrt{\log_2 x},\mb{D}(\tilde{f},1;x)\} - O(1).$$
Using \eqref{TRINQ} with $t = t_0$ the minimizer of $t\mapsto \mb{D}(f,n^{it};x)$ in $[-T,T]$ and \eqref{TRIVQ} with $t = 0$, we deduce that
$$\min_{|t| \leq T} \mb{D}(f,n^{it};x) \geq \frac{1}{2k}\min\left\{\sqrt{\log_2 x},\mb{D}(f,1;x)\right\} - \frac{2k+1}{2k}\left(\sum_{p|q} \frac{1}{p}\right)^{\frac{1}{2}} - O(1),$$
and hence by the Cauchy-Schwarz inequality (and using $k \geq 2$),
$$\mc{D}_f(x;T) = \min_{|t| \leq T} \mb{D}(f,n^{it};x)^2 \geq \frac{1}{8k^2}\min\{\log_2 x, \mb{D}(f,1;x)^2\} - 2\sum_{p|q} \frac{1}{p} - O(1).$$
Plugging this into \eqref{HMT} and using $1/T < (2\xi(x))^{-2}$, we get upon rearranging that 
\begin{equation} \label{ONEOVERXI}
\frac{1}{\xi(x)} \ll \frac{1}{8k^2}\min\{\log_2 x,\mb{D}(f,1;x)^2\}\exp\left(-\frac{1}{8k^2}\min\{\log_2 x,\mb{D}(f,1;x)^2\} + 2\sum_{p|q} \frac{1}{p}\right).
\end{equation}
Note that the arithmetic function $d \mapsto \sum_{p|d} 1/p$ has maximal order\footnote{This is witnessed by a primorial $d = \prod_{p \leq \log d'} p$, with $d' = (1+o(1))\log d$ (by the prime number theorem).} $\log_3 x +O(1)$, for $d \leq x$.  
%$$\sum_{p|q} 1/p \leq \sum_{p \leq \log x} 1/p = \log_3 x + O(1).$$
Thus, since $\xi(x) \ll (\log x)^{\frac{1}{10k^2}}$, it follows from \eqref{ONEOVERXI} that $\min\{\log_2 x,\mb{D}(f,1;x)^2\} = \mb{D}(f,1;x)^2$. Hence,
$$\frac{1}{\xi(x)} \ll \frac{1}{8k^2} \mb{D}(f,1;x)^2e^{-\frac{1}{8k^2}\mb{D}(f,1;x)^2} \cdot \exp\left(2\sum_{p|q}\frac{1}{p}\right).$$
Taking logarithms, and noting that  $\log X < X/17$ for $X = \mb{D}(f,1;x)^2/(8k^2) > 25$, this implies that 
%$$\sum_{p\leq x} \frac{1-\text{Re}(h(p))}{p} = 
$$\mb{D}(f,1;x)^2 \leq 8.5k^2\left(\log(\xi(x)\log \xi(x)) + O(1)+2\sum_{p|q}\frac{1}{p}\right) \leq 27k^2\left(\log(\xi(x)\log \xi(x)) + O(1)\right),$$
using $\sum_{p|q} 1/p < \log \xi(x)$, on account of our assumption $q \in \mc{Q}(\xi)$.
\end{proof}
%\begin{rem}\label{PRIMSUM}
%Note that the sum over primes $p$ on the LHS of \eqref{QCONST} is maximized when $q$ is the product of primes $p \leq \log q$. This gives
%$$\sum_{p|q} \frac{1}{p} \leq \log_3 q + O(1).$$
%For the purposes of proving Theorem \ref{THMODD}, the range of $x$ is such that $\log_3 q \asymp \log_3 x$. Thus, provided that $\xi(t) \gg (\log_3 t)^{1+\e}$, say, the condition on $q$ implicit in \eqref{QCONST} is harmless. Moreover, one can easily show that the function $q \mapsto \sum_{p|q} 1/p$ is bounded on average, so that for all but $o(Q)$ moduli $q \leq Q$, it will hold; notably, it certainly holds for prime moduli.
%\end{rem}
%Thus, provided that $\xi(x)$ is not too small, the contribution from prime factors of $q$ is negligible here.

%To prove this, we require the following lemma.
%
\begin{proof}[Proof of Proposition \ref{CESTOLOG}]
%Before launching into the proof of either of these parts, we discuss some common aspects of both. \\
%Using this data and 
In light of Lemma \ref{REVTONN}, it suffices to derive a lower bound for $\sum_{n \leq x} g(n)$ in both parts a) and b) (recalling that $g = 1\ast 1\ast f \ast \bar{f}$).\\
a) 
%When $k$ is not odd, we cannot use the above argument. Instead, we shall derive a similar result, 
By Theorem 1.2 of \cite{Hil}, we get
\begin{equation}\label{HilApp}
\frac{1}{x}\sum_{n \leq x} g(n) \gg \exp\left(\sum_{p \leq x} \frac{g(p)-1}{p}\right) \sg_-\left(\exp\left(\sum_{p \leq x} \frac{\max\{0,1-g(p)\}}{p}\right)\right) + O\left(\exp\left(-(\log x)^{\beta}\right)\right),
\end{equation}
for some absolute constant $\beta > 0$. Here, we have $\sg_-(u) := u\rho(u)$, where $\rho$ is the Dickman-de Bruijn function, which satisfies $\rho(u) \gg u^{-3u/2}$ for large enough $u$ (see, for instance, equation (1.12) in \cite{Gra}). Now, observe that $g(p) = 2(1+\text{Re}(f(p)))$ for all primes $p$. Thus, by Lemma \ref{PRETDISTEST}, we get
\begin{align*}
\sum_{p \leq x} \frac{g(p)-1}{p} &= \sum_{p \leq x} \frac{1+2\text{Re}(f(p))}{p} = 3\log_2 x - 2\mb{D}(f,1;x)^2 + O(1) \\
&\geq 3\log_2 x - 54k^2\log (\xi(x)\log \xi(x)) -O_k(1).
\end{align*}
Note that $\max\{0,1-g(p)\} \neq 0$ if, and only if, $\text{Re}(f(p)) < -1/2$, in which case the numerator is bounded by 1. Hence, for $q$ (and thus $x$) sufficiently large, we get
\begin{align*}
\log u &:= \sum_{p \leq x} \frac{\max\{0,1-g(p)\}}{p} < \frac{2}{3}\sum_{p \leq x} \frac{1-\text{Re}(f(p))}{p} \\
&\leq \frac{54k^2}{3}\log(\xi(x)\log \xi(x)) + O_k(1) \leq 19k^2\log \xi(x),
\end{align*}
From \eqref{HilApp}, it follows that for $x$ sufficiently large (in terms of $\xi$),
$$\frac{1}{x}\sum_{n \leq x} g(n)\gg_k (\log x)^3 \xi(x)^{-54k^2} u^{-3u/2} \gg (\log x)^3 \xi(x)^{-38k^2\xi(x)^{19k^2}}.$$
By Lemma \ref{REVTONN}, this results in the estimate
$$\max_{\sqrt{x} < y \leq x} \left|\frac{1}{\log y} \sum_{n \leq y} \frac{f(n)}{n}\right| + \frac{1}{\log x}\gg \xi(x)^{-38k^2\xi(x)^{19k^2}},$$
as claimed. \\
b) We assume throughout the proof that $x > \exp(\sqrt{\log q})$. We would like to apply part ii) of Th\'{e}or\`{e}me 1.1 in \cite{Ten2}, which gives a better lower bound for partial sums of $g$ compared to Hildebrand's estimate provided $g$ satisfies various conditions. To this end we verify the conditions i)-v) stipulated there. Since $|f| \leq 1$, it follows that $g(p) \leq 4$ for all $p$, and moreover $g(n) \ll_{\e} n^{\e}$ for all $n$. Thus, conditions i) and ii) for that theorem, namely
\begin{align*}
&\sum_{p \leq y} g(p)\log p \leq Ay \text{ for all $y \geq 2$} \\ 
&\sum_p \sum_{\nu \geq 2} \frac{g(p^{\nu})\log(p^{\nu})}{p^{\nu}} \leq B,
\end{align*}
both hold with $A = 5$ and $B = O(1)$ by the prime number theorem. Moreover, condition iii) holds since for each $\delta > 0$, setting $\e = \delta/5$ and taking $y$ sufficiently large in terms of $\delta$ (in line with the prime number theorem), we get
$$\sum_{y < p \leq y(1+\e)} g(p) \log p \leq \delta y$$
for all $y \geq y_0$.
Turning to the first condition in v) (in light of its application in the proof of the theorem), it is enough to check that for any $\lambda > 0$ there is $\eta > 0$ such that for any choice of $0 < \sg < \tau < 1-\sg$, we have
$$\sum_{x^{\tau}<p \leq x^{1-\sg}}\frac{1_{g(p)\leq \eta}}{p} \leq \lambda.$$
To see this, we note that since $k$ is odd, writing $k = 2m+1$, it follows that 
$$|f(p)-1|^2 \leq 2(1-\cos(2\pi m/(2m+1))) \leq 4-2\frac{\pi^2}{3(2m+1)^2} = 4-\frac{2\pi^2}{3k^2},$$ 
for any prime $p$.
Hence, selecting $\eta = \frac{\pi^2}{3k^2}$, we see that since 
$$g(p) = \begin{cases} 2 &\text{ if $f(p) = 0$}\\ 4-|f(p)-1|^2 &\text{ otherwise,}\end{cases}$$ 
we find $g(p) > \eta$ for all $p$. Hence, condition v) there holds with this $\eta$ for any $\lambda > 0$ and any choice of $\sg,\tau$ as above. \\
It remains to demonstrate iv) in the notation there. We pick $\sg = \xi(x)^{-c_0}$, for $c_0 > 0$ a constant to be chosen, and $\tau = 1/46$. Putting $h := (1-\tau)/A\tau = 9$, it follows that $Q(h) := h\log h - h+1 \geq 1/2$. It now suffices to check that
$$\sum_{x^{\sg} < p \leq x^{\tau}}\frac{g(p)}{p} \geq \log\left(\frac{1+\eta}{1-e^{-5Q(h)}}\right)$$
with the above choices made. To see this, note that 
%(upon ignoring the contribution of primes $p|q$), it suffices to check that
$$\sum_{x^{\sg} < p \leq x^{\tau}} \frac{g(p)}{p} = \sum_{x^{\sg} < p \leq x^{\tau} \atop p\nmid q} \frac{g(p)}{p} + O\left(x^{-\sg}\frac{\log q}{\sg \log x}\right),$$
the error term arising from $g(p)\leq 4$ for all primes $p$, and thus
$$\sum_{p|q \atop p > x^{\sg}} \frac{g(p)}{p} \leq 4x^{-\sg}\sum_{p|q \atop p > x^{\sg}} 1 \leq 4x^{-\sg} \frac{\log q}{\log(x^{\sg})}.$$
%counting the number of primes $p|q$ with $p > x^{\sg}$. 
For the above sum, noting that $|f(p)-1|^2 = 2(1-\text{Re}(f(p)))$ whenever $p\nmid q$, we have
\begin{align*}
\sum_{x^{\sg} < p \leq x^{\tau} \atop p\nmid q} \frac{g(p)}{p} &\geq 4\sum_{x^{\sg} < p \leq x^{\tau}} \frac{1}{p} - 2\left(\mb{D}(f,1;x^{\tau})^2 - \mb{D}(f,1;x^{\sg})^2\right) - O\left(x^{-\sg}\frac{\log q}{\sg \log x}\right) \\
&\geq 4\log(\tau\sg^{-1}) -2\mb{D}(f,1;x)^2 - O\left(\frac{x^{-\sg}\log q}{\sg\log x}\right).
\end{align*}
In light of Lemma \ref{PRETDISTEST},
% gave $\mb{D}(h,1;x)^2 \leq (2k)^2 \left(2\log(\xi(x)) + O\left(\sum_{p|q} \frac{1}{p}\right)\right)$, whence it follows upon 
choosing $c_0 = 27k^2$ gives
$$2\mb{D}(f,1;x)^2 \leq 54k^2\log(\xi(x)\log \xi(x)) + O_k\left(1\right) \leq 2\log(\tau \sg^{-1}) + O_k\left(1\right).$$
%(given that the sum over $p|q$ is harmless compared to $\log \xi(x)$) that
As \eqref{QCONST} gives $\sg \log x \gg_{\e} (\log x)^{\e}$ and thus $x^{-\sg} \log q \leq x^{-\sg}(\log x)^2 \ll 1$, we get that if $x$ is sufficiently large in terms of $k$ then
\begin{align*}
\sum_{x^{\sg} < p \leq x^{\tau} \atop p\nmid q} \frac{g(p)}{p} &\geq 2\log(\tau\sg^{-1}) - O_k\left(1\right) \geq 54 k^2\log \xi(x) - O_k(1) \\
&\geq 2\left(\eta + e^{-5Q(h)}\right) \geq\log\left(\frac{1+ \eta}{1-e^{-5Q(h)}}\right).
\end{align*}
Hence, iv) holds in the theorem in \cite{Ten2}, provided that $x$ is sufficiently large (in terms of $\xi$ and $k$ alone). Th\'{e}or\`{e}me 1.1 there then implies\footnote{The result appearing there is given with a constant that is not made implicit in terms of the parameters $\sg$ and $\eta$. However, the whole argument in Section 2 of that paper can be made to depend explicitly on those two parameters, which may then be allowed to depend on $x$. Following the calculation there, one readily finds that, provided that $\sg \log x \geq C\log\log x$ for some sufficiently large $C > 0$,
$$\sum_{n \leq x} g(n) \gg \frac{x}{\log x} \exp\left(\sum_{p\leq x} g(p)/p\right)\left(\frac{\eta^2\sg}{1+\tau}e^{-2A\tau}\left(1-e^{-AQ(h)}\right) - O_{h,A}\left(\frac{\eta}{\log x} + \tau(\log x)e^{-\sqrt{\tau \log x}}\right)\right),$$
whenever $g: \mb{N} \ra \mb{C}$ is non-negative, multiplicative and satisfies conditions i)-v) above with $A > 0$, $\eta > 0$, $0 < \sg < \tau < \min\{1-\sg,1/(1+A)\}$ and $h = (1-\tau)/\tau A$.}
that for any $x \geq \max\{x_0(\xi,k),e^{\sqrt{\log q}}\}$ (recalling that $\eta \asymp 1/k^2$)
\begin{align*}
\frac{1}{x}\sum_{n \leq x} g(n) &\gg\frac{1}{k^4} \xi(x)^{-17k^2}\frac{x}{\log x}\exp\left(\sum_{p \leq x} \frac{g(p)}{p}\right) \\
&= \frac{1}{k^4}\xi(x)^{-17k^2}\frac{x}{\log x}\exp\left(\sum_{p \leq x} \frac{4-2(1-\text{Re}(f(p)))}{p}\right) \\
&\gg \frac{1}{k^4}\xi(x)^{-17k^2}x(\log x)^3 e^{-2\mb{D}(f,1;x)^2} \gg \frac{1}{k^4}x(\log x)^3\xi(x)^{-72k^2} \\
&\gg \frac{1}{k^4}x(\log x)^3\xi(x)^{-72k^2},
\end{align*}
in light of Lemma \ref{PRETDISTEST} and our assumption on $\xi(x)$.
Combining this with Lemma \ref{REVTONN}, we get that
%and using the fact that $\xi$ is monotonically increasing, we get that
$$\max_{\sqrt{x} < y \leq x} \left|\frac{1}{\log y}\sum_{n \leq y} \frac{f(n)}{n}\right| +\frac{1}{\log x} \gg \frac{1}{k^4}\xi(x)^{-72k^2},$$
% \exp\left(-O\left(k^2\sum_{p|q}\frac{1}{p}\right)\right),$$
and the claim follows.
%\end{proof}
\end{proof}
\section{Proof of Theorem \ref{THMGEN}}
%We now specialize the results of the previous section to the case that $h = \chi \psi$, where $\chi$ is a primitive character modulo $q$ of fixed order $g$, and $\psi$ is a primitive character of modulus $\ell$ with parity opposite that of $\chi$; in this case, $\psi^{\phi(\ell)} = \psi_0$, and hence $(\chi\psi(p))^k = 1$ for all $p \nmid q\ell$ with $k \leq g\phi(\ell)$. \\
Let $q$ be large and let $\chi$ be a primitive odd character modulo $q$. Throughout this section, we shall assume that there is a non-decreasing function $a(t) \ra \infty$ as $t \ra \infty$, satisfying $\log a(t) \asymp \log a(q)$ and $a(t) \leq \log t$ for all $t > \exp\left(\sqrt{\log q}\right)$, and such that the P\'{o}lya-Vinogradov inequality for $\chi$ can be improved in the form
\begin{equation}\label{BETTERPV}
\max_{1 \leq t \leq q} \left|\sum_{n \leq t} \chi(n)\right| \leq \sqrt{q}\frac{\log q}{a(q)}.
\end{equation}
Our goal is to show that such a hypothesis implies Theorem \ref{THMGEN}, regarding bounds for $\sum_{n \leq t} \chi(n)$ whenever $t > q^{\e}$. \\
\begin{prop}\label{KeyProp2}
Let $q \geq 3$ and let $\chi$ be an odd primitive character modulo $q$. Assume that $\chi$ satisfies \eqref{BETTERPV}. Then for any $\e > 0$, 
\begin{align*}
\max_{1 \leq N \leq q} \left|\sum_{1 \leq n \leq N} \frac{\chi(n)}{n} \right| \ll_{\e} (\log q)a(q)^{-1/3+\e}.
\end{align*}
\end{prop}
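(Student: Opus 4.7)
The strategy follows the outline given in the introduction: Pólya's Fourier expansion converts the improved Pólya–Vinogradov hypothesis into a bound on exponential log-sums of $\chi$, which, after specializing the Fourier variable and exploiting orthogonality of Dirichlet characters of small conductor, yields bounds on twisted log-sums and hence on $\sum_{n\le N}\chi(n)/n$.

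Concretely, the plan is to invoke Lemma 3.3 (Pólya's Fourier expansion) together with $|\tau(\chi)|=\sqrt q$ and \eqref{BETTERPV} to obtain
\begin{equation*}
\max_{\alpha\in[0,1]}\bigl|F(\alpha)\bigr|\ll \frac{\log q}{a(q)},\qquad F(\alpha):=\sum_{1\le|n|\le q}\frac{\chi(n)}{n}(1-e(n\alpha)).
\end{equation*}
Since $\chi$ is odd, folding $n\mapsto -n$ collapses the two-sided sum to $F(\alpha)=2\sum_{n=1}^{q}\chi(n)(1-\cos(2\pi n\alpha))/n$, a real-valued quantity; the oddness of $\chi$ is crucial here, as emphasized in the footnote of the strategy of proof.

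I would then specialize to $\alpha=a/m$ with $(a,m)=1$ and $m$ a parameter to be chosen in terms of $a(q)$. Using $e(na/m)=\varphi(m)^{-1}\sum_{\psi\bmod m}\tau(\bar\psi)\psi(a)\psi(n)$ for $(n,m)=1$ and averaging the identity against $\bar\psi(a)$ for a primitive $\psi\pmod m$, orthogonality in $a$ isolates the contribution of $\psi$. The cosine in $F(a/m)$ contributes a factor $(1+\psi(-1))/2$, nonzero only when $\psi(-1)=1$, forcing $\psi$ to be even (parity opposite to $\chi$, exactly as predicted in the strategy sketch). Taking absolute values and using $|\tau(\bar\psi)|=\sqrt m$, one arrives at
\begin{equation*}
\Bigl|\sum_{\substack{n\le q\\ (n,m)=1}}\frac{\chi\psi(n)}{n}\Bigr|\ll \sqrt m\cdot\frac{\log q}{a(q)},
\end{equation*}
with the $(n,m)>1$ contribution removed by inclusion-exclusion and absorbed into a lower-order error provided $m$ is small.

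Once such bounds on $\sum_{n\le q}\chi\psi(n)/n$ are in hand for a suitable even $\psi$ of bounded conductor, Lemma \ref{CASEWITHPAR} would convert them into a log-sum bound for $\chi$ itself at the single scale $N=q$. The uniform bound over $N\le q$ is then obtained in two regimes: for $N\ge\sqrt q$, partial summation applied to the Césaro bound $|\sum_{n\le t}\chi(n)|\ll\sqrt q\log q/a(q)$ (immediate from \eqref{BETTERPV}) reduces everything to the endpoint estimate; for smaller $N$, one invokes Lemma \ref{LOGMEAN}, presumably propagating the scale-$q$ bound down to shorter intervals via monotonicity (or slow variation) of the pretentious distance $\mb{D}(\chi,1;x)$ in $x$.

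The main obstacle is this final uniformization step. The $\sqrt m$ loss in extracting the twist $\chi\psi$, the coprimality correction at primes $p\mid m$, and the need to control $\sum_{n\le N}\chi(n)/n$ on scales $N\ll\sqrt q$ must all be balanced against the choice of $m=m(a(q))$; optimizing these losses produces the exponent $1/3-\varepsilon$ in place of the near-$1$ savings that one would obtain at the single endpoint $N=q$ by merely integrating $F(\alpha)$ in $\alpha$.
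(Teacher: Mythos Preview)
Your outline captures the first half of the paper's argument correctly: steps 1--4 are exactly how Lemma~\ref{CASEWITHPAR} and Lemma~\ref{LOGMEAN} produce the bound
\[
\max_{1\le N\le q}\Bigl|\sum_{n\le N}\frac{\chi(n)\psi(n)}{n}\Bigr|\ll \sqrt{m}\,\frac{\log q}{a(q)}
\]
for any even primitive $\psi$ of conductor $m$. But the remainder of the plan has a genuine gap.

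First, your step 5 misreads Lemma~\ref{CASEWITHPAR}. That lemma bounds the log-sum of $\chi\psi$ \emph{above} by the C\'{e}saro maximum of $\chi$; it does not recover $\sum_n\chi(n)/n$ from $\sum_n\chi\psi(n)/n$. The paper untwists via Corollary~\ref{BETTERLOGMEANFORCHI}, which writes $1_{(n,\ell)=1}=\psi(n)\bar\psi(n)$ and expands $\bar\psi(n)$ through its Gauss sum, producing
\[
\max_{1\le N\le q}\Bigl|\sum_{n\le N}\frac{\chi(n)}{n}\Bigr|\ll \sqrt{\ell}\,\max_{\alpha\in[0,1]}\max_{1\le N\le q}\Bigl|\sum_{1\le |n|\le N}\frac{\chi(n)\psi(n)}{n}e(n\alpha)\Bigr|.
\]
So you need the twisted log-sum \emph{uniformly over all phases} $\alpha$, not just at $\alpha=0$.

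Second, and more seriously, you have omitted the mechanism that actually generates the exponent $1/3$. The paper bounds the exponential sum above (Lemma~\ref{EXPSUMBD}) by a major/minor arc decomposition: for $\alpha$ close to a rational of denominator $r>M$ one applies Goldmakher's estimate to get $\ll (\log q)(\log M)^{5/2}/\sqrt{M}$, while for $r\le M$ one uses the Granville--Soundararajan identity together with Lemma~\ref{LOGMEAN} to get $\ll_\e M^{1+\e}(\log q)/a(q)$. Balancing these at $M=a(q)^{2/3}$ yields $a(q)^{-1/3+\e}$. Your proposal has no analogue of the minor arc input, and your attribution of the $1/3$ to ``optimizing'' $\sqrt{m}$-losses and short-scale corrections is not where the exponent comes from; with $\ell=5$ fixed (as in the paper) those losses are $O(1)$. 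Finally, note that the uniformity in $N$ is already built into Lemmata~\ref{CASEWITHPAR}, \ref{LOGMEAN}, and~\ref{EXPSUMBD}, so no separate partial-summation argument in $N$ is needed.
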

To prove Proposition \ref{KeyProp2}, we first recall the following simple harmonic analysis device which is essentially due to Goldmakher and Lamzouri (based on work of Paley).
\begin{lem}[cf. Lemma 2.2 of \cite{GolLam}] \label{Paley}
Let $A > 0$. Let $a: \mb{Z} \ra \mb{C}$ be a sequence with $\sup_{n \geq 1}|a(n)| \leq A$. Then
\begin{align*}
\max_{\alpha \in [0,1]} \max_{1 \leq N \leq q} \left|\sum_{1 \leq |n| \leq N} \frac{a(n)}{n}e(n\alpha)\right| = \max_{\alpha \in [0,1]} \left|\sum_{1 \leq |n|\leq q} \frac{a(n)}{n}e(n\alpha)\right| + O(A).
\end{align*}
\end{lem}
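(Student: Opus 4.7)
The plan is to prove the non-trivial direction, namely
$$\max_{\alpha \in [0,1]}\max_{1\leq N \leq q}|S_N(\alpha)| \leq \max_{\alpha \in [0,1]}|S_q(\alpha)| + O(A),$$
where I abbreviate $S_N(\alpha) := \sum_{1 \leq |n| \leq N}\frac{a(n)}{n}e(n\alpha)$; the reverse inequality is free upon taking $N=q$ on the left-hand side.

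My approach is to smooth out the sharp truncation $|n| \leq N$ in $S_N(\alpha)$ by convolving $S_q$ against the Fej\'er kernel
$$F_N(\beta) := \sum_{|k| < N}\left(1 - \frac{|k|}{N}\right)e(k\beta),$$
which is $1$-periodic, non-negative, and satisfies $\int_0^1 F_N(\beta)\,d\beta = 1$. Setting
$$I_\alpha(N) := \int_0^1 S_q(\alpha+\beta)F_N(\beta)\,d\beta,$$
I would expand $S_q(\alpha+\beta)$ according to its definition, interchange the finite sum with the integral, and use the fact that the Fourier coefficients of $F_N$ are $(1 - |n|/N)_+$ to obtain
$$I_\alpha(N) = \sum_{1 \leq |n| < N}\frac{a(n)}{n}\left(1 - \frac{|n|}{N}\right)e(n\alpha).$$

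The next step is to show that the discrepancy $S_N(\alpha) - I_\alpha(N)$ is $O(A)$ uniformly in $\alpha$ and $N$. Separating the boundary contribution at $|n|=N$ from the smoothing correction,
$$S_N(\alpha) - I_\alpha(N) = \frac{1}{N}\sum_{1 \leq |n| < N}\operatorname{sgn}(n)\,a(n)\,e(n\alpha) + \frac{a(N)e(N\alpha) - a(-N)e(-N\alpha)}{N},$$
and the triangle inequality together with $|a(n)| \leq A$ bounds each piece by $O(A)$. On the other hand, the nonnegativity of $F_N$, its unit mass, and the $1$-periodicity of $S_q$ give
$$|I_\alpha(N)| \leq \max_{\beta \in [0,1]}|S_q(\alpha+\beta)| = \max_{\beta \in [0,1]}|S_q(\beta)|.$$
Combining the two estimates and then maximising over $\alpha \in [0,1]$ and $1 \leq N \leq q$ would complete the proof.

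The only real subtlety is the choice of kernel. The naive candidate, the Dirichlet kernel $D_N$, would give the truncation exactly but has $\|D_N\|_{L^1} \asymp \log N$, which would cost a spurious logarithm that is fatal for the stated $O(A)$ error. The Fej\'er kernel circumvents this via its non-negativity, which forces $\|F_N\|_{L^1}=1$; the only price is the linear Ces\`aro weight $1 - |n|/N$, which is absorbed into an additive $O(A)$ error thanks to the compensating $1/n$ normalisation already present in $S_N$.
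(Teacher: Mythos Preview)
Your argument is correct and is precisely the standard Fej\'er-kernel proof that underlies Lemma 2.2 of \cite{GolLam}; the paper does not reprove this but simply rescales so that $|a(n)|\leq 1$ and then cites that lemma directly. In effect you have unpacked the reference, so the approaches coincide.
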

\begin{proof}
Writing $a_n = Aa'_n$ for all $n \in \mb{Z}$, we find that $\{a_n'\}_n \subset \mb{U}$, and the claim then follows immediately from Lemma 2.2 of \cite{GolLam}.
%The only difference between this result and Lemma 2.2 of \cite{GolLam} is that we permit $\{a(n)\}_{n \in \mb{Z}}$ to be a bounded (though not necessarily 1-bounded) sequence. However, note that for any $\alpha \in [0,1]$,
%\begin{align*}
%\sum_{1\leq |n| \leq N} \frac{a(n)e(n\alpha)}{n} &= \sum_{1 \leq |n| \leq N} \frac{a(n)e(n\alpha)}{n}\left(1-\frac{|n|}{N}\right) + O\left(\frac{A}{N} \sum_{1 \leq |n| \leq N}1\right) \\
%&= \sum_{1 \leq |n| \leq N} \frac{a(n)e(n\alpha)}{n}\left(1-\frac{|n|}{N}\right) + O(A).
%\end{align*}
%With this in hand, the remainder of the proof follows identically as in \cite{GolLam}.
\end{proof}
The next lemma allows us to bound maximal C\'{e}saro partial sums of $\chi$ from below in terms of maximal logarithmically averaged partial sums of the twist $\chi\psi$, provided that $\chi$ and $\psi$ have opposite parity. This is a variant of a result that has appeared before in several places (including, among others \cite{BoGo}, \cite{LamMan}) but with no fixed restrictions on the parity of $\psi$ in particular (previous variants used $\psi$ odd, though we shall require $\psi$ even).
\begin{lem} \label{CASEWITHPAR}
Let $\chi$ and $\psi$ be primitive characters to respective moduli $q,\ell \geq 2$, such that $\chi\psi$ is odd. Then
%a) Assume that $\chi$ and $\psi$ have opposite parity. Then
$$\max_{1 \leq t \leq q} \left|\sum_{n \leq t} \chi(n)\right| +O(\sqrt{q}) \geq \frac{\sqrt{q\ell}}{2\pi\phi(\ell)} \max_{1 \leq N \leq q} \left|\sum_{1 \leq n \leq N} \frac{\chi(n)\psi(n)}{n}\right|.$$
%b) Assume that $\chi$ and $\psi$ have the same parity. Then
%$$\max_{1 \leq t \leq q} \left|\sum_{n \leq t}  \chi(n)\right| + O\left(\sqrt{q} \max_{1 \leq N \leq q} \max_{\alpha \in [0,1]} \left|\sum_{1 \leq n \leq N} \frac{\chi(n)}{n}e(n\alpha)\right|\right) \geq \frac{\sqrt{q\ell}}{\pi \phi(\ell)}\max_{1 \leq N \leq q} \left|\sum_{1 \leq n \leq N}\frac{\chi(n)\xi(n)}{n}\right|.$$
\end{lem}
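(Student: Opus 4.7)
The plan is to link the two sides by combining P\'olya's Fourier expansion with a Gauss-sum orthogonality argument, using Lemma~\ref{Paley} to handle the supremum over $N$ on the right. Set $G_\chi(\alpha) := \sum_{1 \leq |n| \leq q} \chi(n)(1 - e(n\alpha))/n$ and $T_\chi(\beta) := \sum_{1 \leq |n| \leq q} \chi(n) e(n\beta)/n$, so that $T_\chi(\beta) = T_\chi(0) - G_\chi(\beta)$.

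First, P\'olya's Fourier expansion (quoted in the strategy paragraph above) gives immediately $\max_\alpha |G_\chi(\alpha)| \leq (2\pi/\sqrt{q})\max_t |S_\chi(t)| + O(1)$. Since $\chi\psi$ is odd, the substitution $n \mapsto -n$ yields $\max_N |\sum_{n \leq N} \chi(n)\psi(n)/n| = \tfrac{1}{2}\max_N |\sum_{1 \leq |n| \leq N} \chi(n)\psi(n)/n|$; applying Lemma~\ref{Paley} to the bounded sequence $a(n) = \chi(n)\psi(n)$ at $\alpha = 0$ then produces
$$\max_N \left|\sum_{n \leq N}\frac{\chi(n)\psi(n)}{n}\right| \leq \frac{1}{2}\max_\alpha \left|\sum_{1 \leq |n| \leq q}\frac{\chi(n)\psi(n)e(n\alpha)}{n}\right| + O(1).$$

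The second main step is to apply the Gauss-sum inversion $\psi(n) = \tau(\bar\psi)^{-1} \sum_{b \bmod \ell} \bar\psi(b) e(bn/\ell)$ (valid for all $n$ by primitivity of $\psi$) and swap sums, obtaining
$$\sum_{1\leq |n| \leq q} \frac{\chi(n)\psi(n) e(n\alpha)}{n} = \frac{1}{\tau(\bar\psi)} \sum_{b \bmod \ell} \bar\psi(b)\,T_\chi\!\left(b/\ell + \alpha\right).$$
The crucial point is that $\sum_b \bar\psi(b) = 0$ (as $\ell \geq 2$ forces $\psi$ non-principal), so the potentially large constant contribution $T_\chi(0)$, of size $|2L(1,\chi)| \ll \log q$ when $\chi$ is odd, is annihilated by the sum over $b$. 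Writing $T_\chi(\beta) = T_\chi(0) - G_\chi(\beta)$, the displayed right-hand side is therefore equal to $-\tau(\bar\psi)^{-1}\sum_b\bar\psi(b) G_\chi(b/\ell + \alpha)$, and the triangle inequality, together with $|\tau(\bar\psi)| = \sqrt{\ell}$ and the fact that $\bar\psi$ is supported on $\phi(\ell)$ residues, gives
$$\max_\alpha \left|\sum_{1\leq |n| \leq q} \frac{\chi(n)\psi(n) e(n\alpha)}{n}\right| \leq \frac{\phi(\ell)}{\sqrt{\ell}}\max_\beta |G_\chi(\beta)|.$$

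Chaining these three estimates and rearranging produces the stated inequality, in fact with leading constant $\frac{1}{\pi}$ in place of $\frac{1}{2\pi}$; the factor-of-two slack comfortably absorbs the $O(\phi(\ell)/\sqrt{\ell})$ additive error, which after multiplication by $\sqrt{q\ell}/\phi(\ell)$ is exactly $O(\sqrt{q})$. The main pitfall to avoid is the naive bound $|T_\chi(\beta)| \leq |T_\chi(0)| + |G_\chi(\beta)|$, which would propagate a spurious $O(\sqrt{q}\log q)$-sized error and destroy the quality of the conclusion; performing the cancellation $\sum_b \bar\psi(b) = 0$ at the level of the sum (before taking absolute values) is the key maneuver that lets the P\'olya input feed cleanly into a bound for logarithmic partial sums of $\chi\psi$.
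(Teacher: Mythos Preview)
Your proof is correct and uses the same three ingredients as the paper (P\'olya's Fourier expansion, Lemma~\ref{Paley}, and the Gauss-sum/orthogonality relation for $\psi$), but you assemble them in a slightly different order. The paper first applies Lemma~\ref{Paley} to the sequence $a(n)=\chi(n)(1-e(n\theta))$ (after a triangle-inequality trick costing a factor of $2$) to introduce the truncation parameter $N$, and only then averages over $\theta=b/\ell$ against $\bar\psi(b)$; you instead apply Lemma~\ref{Paley} directly to $a(n)=\chi(n)\psi(n)$, then untwist $\psi$ via the Gauss-sum identity, and finally use $\sum_b\bar\psi(b)=0$ to replace $T_\chi$ by $G_\chi$. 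Your ordering is a bit more streamlined and, as you note, avoids the paper's factor-of-$2$ loss, producing the constant $1/\pi$ in place of $1/(2\pi)$; otherwise the arguments are equivalent.
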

\begin{proof}
%Let $\xi$ be a character modulo $m$, and $\psi$ be a character modulo $\ell$ with $\xi$ and $\psi$ of opposite parity, and such that $\xi$ has fixed order $k$. Write $q := [m,\ell]$ and put $\chi := \xi \psi$, which is a character modulo $q$. 
%We begin with an outline of common features of the proof of both parts a) and b). 
By the P\'{o}lya Fourier expansion (see, e.g., (2.1) in \cite{LamMan}), we have
\begin{align*}
\max_{1 \leq t \leq q} \left|\sum_{n \leq t} \chi(n)\right| &= \frac{\sqrt{q}}{2\pi}\max_{1 \leq t \leq q} \left|\sum_{1 \leq |n| \leq q} \frac{\bar{\chi}(n)}{n} \left(1-e\left(-\frac{nt}{q}\right)\right)\right| + O\left(\log q \right) \\
&= \frac{\sqrt{q}}{2\pi}\max_{\theta \in [0,1]} \left|\sum_{1 \leq |n| \leq q} \frac{\chi(n)}{n}\left(1-e(n\theta)\right)\right| + O(\sqrt{q}),
\end{align*}
where the second equality follows since if the sum is maximized at $\theta_0 \in [0,1]$, we may take $t_0 := \llf q\theta_0\rrf$ and thus
\begin{align*}
&\left|\left|\sum_{1 \leq |n| \leq q} \frac{\chi(n)}{n}\left(1-e\left(\frac{nt_0}{q}\right)\right)\right| - \left|\sum_{1 \leq |n| \leq q} \frac{\chi(n)}{n}\left(1-e(n\theta_0)\right)\right|\right| \\
&\leq \sum_{n \leq q} \frac{1}{n}|e(n\theta_0) - e(nt_0/q)| \leq 2\pi q|\theta_0-t_0/q| \ll 1.
\end{align*}
%Hence, we indeed have
%$$\max_{1 \leq t \leq q} \left|\sum_{n \leq t} \chi(n)\right| = \frac{\sqrt{q}}{2\pi} \max_{\theta \in [0,1]} \left|\sum_{1 \leq |n| \leq N} \frac{\chi(n)}{n}\left(1-e(n\theta)\right)\right| + O(\log q).$$
%a) Continuing from this last expression, we have
By the triangle inequality, we observe that
\begin{align*}
\max_{\theta \in [0,1]} \left|\sum_{1 \leq |n| \leq q} \frac{\chi(n)}{n}(1-e(n\theta))\right| \geq \frac{1}{2}\max_{\theta\in [0,1]} \max_{\alpha\in [0,1]} \left|\sum_{1 \leq |n| \leq q} \frac{\chi(n)}{n}(e(n\alpha)-e(n(\alpha+\theta)))\right|,
\end{align*}
and by Lemma \ref{Paley} (with $a(n) := \chi(n)(1-e(n\theta)$), the RHS of this last expression is
\begin{align*}
&\frac{1}{2}\max_{\theta\in [0,1]} \max_{\alpha \in [0,1]} \max_{1 \leq N \leq q} \left|\sum_{1 \leq |n| \leq N} \frac{\chi(n)}{n}(1-e(n\theta))e(n\alpha)\right| +O(1) \\
&\geq \frac{1}{2}\max_{\theta \in [0,1]} \max_{1 \leq N \leq q} \left|\sum_{1 \leq |n| \leq N} \frac{\chi(n)}{n}(1-e(n\theta))\right| + O(1).
\end{align*}
Now, on one hand, we have
\begin{align}
\max_{1 \leq N \leq q} \left|\frac{1}{\phi(\ell)}\sum_{b(\ell)} \bar{\psi}(b)\sum_{1 \leq |n| \leq N} \frac{\chi(n)}{n}\left(1-e\left(\frac{nb}{\ell}\right)\right)\right| &\leq \frac{1}{\phi(\ell)} \sum_{b (\ell)} \max_{1 \leq N \leq q} \left|\sum_{1 \leq |n| \leq N} \frac{\chi(n)}{n}\left(1-e(nb/\ell)\right)\right| \nonumber \\
&\leq \max_{\theta \in [0,1]} \max_{1 \leq N \leq q} \left|\sum_{1 \leq |n| \leq N} \frac{\chi(n)}{n}\left(1-e(n\theta)\right)\right|.\label{TRICK1}
\end{align}
On the other, as $\chi\xi(-1) = -1$ and $\psi$ is non-principal, orthogonality modulo $\ell$ gives
\begin{align}
\frac{1}{\phi(\ell)} \sum_{b(\ell)}\bar{\psi}(b)\sum_{1 \leq |n| \leq N} \frac{\chi(n)}{n}\left(1-e\left(\frac{nb}{\ell}\right)\right) &= -\frac{1}{\phi(\ell)}\sum_{1 \leq |n| \leq N} \frac{\chi(n)}{n}\cdot \frac{1}{\phi(\ell)}\sum_{b(\ell)} \bar{\psi}(b)e(nb/\ell) \nonumber\\
&= -\frac{2\tau(\bar{\psi})}{\phi(\ell)} \sum_{1 \leq n \leq N} \frac{\chi(n)\psi(n)}{n}, \label{TRICK2}
\end{align}
for each $1 \leq N \leq q$. It follows that
$$\max_{1 \leq t \leq q} \left|\sum_{n \leq t} \chi(n)\right| \geq \frac{\sqrt{q\ell}}{2\pi\phi(\ell)} \max_{1 \leq N \leq q} \left|\sum_{1 \leq n \leq N} \frac{\chi(n)\psi(n)}{n}\right| - O\left(\sqrt{q}\right),$$
and the claim follows.
\end{proof}
\begin{cor}\label{BETTERLOGMEANFORCHI}
Let $q \geq 3$ and let $\ell \geq 5$ be prime. Let $\chi$ be an odd primitive character modulo $q$, and let $\psi$ be an even primitive character modulo $\ell$. Then
\begin{align*}
\max_{1 \leq N \leq q} \left|\sum_{1 \leq n \leq N} \frac{\chi(n)}{n}\right| \ll \sqrt{\ell}\max_{\alpha \in [0,1]} \max_{1 \leq N \leq q} \left|\sum_{1 \leq |n| \leq N} \frac{\chi(n)\psi(n)}{n}e(n\alpha)\right|.
\end{align*}
\end{cor}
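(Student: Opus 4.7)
The plan is to expand $\chi$ pointwise as a linear combination of twists $\chi(n)\psi(n)e(an/\ell)$, using a Gauss sum identity, and then pass to a maximum over additive frequencies at the cost of a factor $\sqrt{\ell}$. First, since $\psi$ is primitive modulo $\ell$ the twisted Gauss sum relation $\tau(\psi,n) = \bar\psi(n)\tau(\psi)$ holds for all $n\in\mb{Z}$, so multiplying through by $\chi(n)\psi(n)$ and using $\psi(n)\bar\psi(n) = \mb{1}_{(n,\ell)=1}$ yields the key identity
$$\tau(\psi)\,\chi(n)\,\mb{1}_{(n,\ell)=1} = \sum_{a\pmod{\ell}}\psi(a)\,\chi(n)\psi(n)\,e(an/\ell),$$
valid for all nonzero $n\in\mb{Z}$ (the terms $n$ with $\ell\mid n$ contribute $0$ on both sides since $\psi$ is primitive of conductor $\ell$).

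Next, I divide by $n$, sum over $1\leq|n|\leq N$, and apply the triangle inequality together with $|\tau(\psi)|=\sqrt{\ell}$ to obtain
$$\left|\sum_{1\leq|n|\leq N,\;(n,\ell)=1}\frac{\chi(n)}{n}\right| \leq \sqrt{\ell}\max_{\alpha\in[0,1]}\left|\sum_{1\leq|n|\leq N}\frac{\chi(n)\psi(n)}{n}\,e(n\alpha)\right|.$$
Because $\chi$ is odd and the function $n\mapsto \mb{1}_{(n,\ell)=1}/n\cdot\chi(n)$ is even (odd/odd), the two-sided sum on the left equals $2\sum_{n\leq N,(n,\ell)=1}\chi(n)/n$. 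Taking the maximum over $1\leq N\leq q$ gives
$$\max_{1\leq N\leq q}\left|\sum_{n\leq N,\;(n,\ell)=1}\frac{\chi(n)}{n}\right| \leq \frac{\sqrt{\ell}}{2}\max_{\alpha\in[0,1]}\max_{1\leq N\leq q}\left|\sum_{1\leq|n|\leq N}\frac{\chi(n)\psi(n)}{n}\,e(n\alpha)\right|.$$

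Finally, I remove the coprimality condition. Splitting off multiples of $\ell$,
$$\sum_{n\leq N}\frac{\chi(n)}{n} = \sum_{n\leq N,\;(n,\ell)=1}\frac{\chi(n)}{n} + \frac{\chi(\ell)}{\ell}\sum_{m\leq N/\ell}\frac{\chi(m)}{m},$$
so setting $L:=\max_{1\leq N\leq q}\bigl|\sum_{n\leq N}\chi(n)/n\bigr|$ and $L_c$ for the same quantity with the coprimality condition imposed, the triangle inequality yields $L\leq L_c + L/\ell$. For $\ell\geq 5$ this rearranges to $L\leq (5/4)L_c$, and combining with the previous display produces the claimed bound $L\ll \sqrt{\ell}\max_\alpha\max_N\bigl|\sum_{|n|\leq N}\chi(n)\psi(n)e(n\alpha)/n\bigr|$.

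There is no real obstacle here; the argument is essentially bookkeeping around the Gauss sum identity, the parity of $\chi$, and the observation that the tail of multiples of $\ell$ contributes a negligible $1/\ell$ factor that can be absorbed because $\ell\geq 5$. The only point requiring minor care is ensuring that the sum on the right-hand side is genuinely two-sided so that taking the $\max$ over $\alpha\in[0,1]$ after applying $|\tau(\psi)|=\sqrt{\ell}$ covers the rational points $a/\ell$ that appear in the identity.
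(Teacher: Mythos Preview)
Your proof is correct and follows essentially the same route as the paper: both arguments insert the Gauss sum identity $\sum_{a\bmod\ell}\psi(a)e(an/\ell)=\bar\psi(n)\tau(\psi)$ to convert $\chi(n)1_{(n,\ell)=1}$ into a linear combination of twists $\chi\psi(n)e(an/\ell)$, bound by the maximum over $\alpha$, use $\chi(-1)=-1$ to unfold the two-sided sum, and then remove the coprimality condition via a geometric-series argument. The only cosmetic difference is that the paper phrases the first step as averaging over $b\pmod{\ell}$ (picking up a factor $\phi(\ell)/\sqrt{\ell}$) and removes coprimality by expanding the full geometric series $\sum_{j\geq 0}\ell^{-j}$, whereas you peel off a single factor of $\ell$ and solve $L\leq L_c+L/\ell$; these are equivalent.
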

\begin{proof}
Following the proof of Lemma \ref{CASEWITHPAR}, particularly \eqref{TRICK1} and \eqref{TRICK2}, we deduce that
%We apply \eqref{TRICK1} and \eqref{TRICK2} to deduce that
\begin{align}
&\max_{\alpha \in [0,1]} \max_{1 \leq N \leq q}\left|\sum_{1 \leq |n| \leq N} \frac{\chi(n)\psi(n)}{n}e(n\alpha)\right| 
%\geq \frac{1}{\phi(\ell)}\sum_{b(\ell)}\max_{1 \leq N \leq q}\left|\sum_{1 \leq |n| \leq N} \frac{\chi(n)\psi(n)}{n}e(nb/\ell)\right| \nonumber\\
\geq \frac{1}{\phi(\ell)} \max_{1 \leq N \leq q} \left|\sum_{1 \leq |n| \leq N} \frac{\chi(n)\psi(n)}{n} \sum_{b(\ell)}\psi(b)e(nb/\ell)\right| \nonumber\\
&= \frac{\sqrt{\ell}}{\phi(\ell)} \max_{1 \leq N \leq q} \left|\sum_{1 \leq |n| \leq N} \frac{\chi(n)\psi(n)\bar{\psi(n)}}{n}\right| = \frac{\sqrt{\ell}}{\phi(\ell)} \max_{1 \leq N \leq q} \left|\sum_{1 \leq |n| \leq N} \frac{\chi(n)}{n}1_{(n,\ell) = 1}\right| \nonumber\\
&= 2\frac{\sqrt{\ell}}{\phi(\ell)}\max_{1 \leq N \leq q} \left|\sum_{1 \leq n \leq N} \frac{\chi(n)1_{(n,\ell) = 1}}{n}\right|, \label{TRICKS}
\end{align}
using the fact that $\chi(-1) = -1$ in the last line. 
%For $q$ sufficiently large we thus get that
%$$\max_{1 \leq N \leq q}\left|\sum_{1 \leq n \leq N} \frac{\chi(n)}{n}1_{(n,\ell) = 1}\right| \ll \sqrt{g}(\log q)\left(\frac{(\log_3 a(q))^{11}}{\log_2 a(q)}\right)^{1/4}.$$
Now, since $\ell$ is prime, we observe that
\begin{align*}
\max_{1 \leq N \leq q} \left|\sum_{1 \leq n \leq N} \frac{\chi(n)}{n}\right| &\leq \max_{1 \leq N \leq q}\sum_{j \geq 0} \frac{1}{\ell^j}\left|\sum_{n \leq N/\ell^j} \frac{\chi(n)}{n}1_{(n,\ell) = 1}\right| \leq \left(\sum_{j \geq 0} \ell^{-j}\right) \max_{1 \leq N' \leq q} \left|\sum_{1 \leq n \leq N'} \frac{\chi(n)}{n}1_{(n,\ell) = 1}\right| \\
&\ll \max_{1 \leq N' \leq q} \left|\sum_{1 \leq n \leq N'} \frac{\chi(n)}{n}1_{(n,\ell) = 1}\right|.
\end{align*}
Combined with the previous estimate, this completes the proof.
\end{proof}
\begin{lem} \label{LOGMEAN}
Let $m \geq 3$. Let $\psi$ be an even non-principal character modulo $m$. As above, assume that $\chi$ is an odd primitive character modulo $q$ satisfying \eqref{BETTERPV}. Then
\begin{align*}
\max_{1 \leq N \leq q} \left|\sum_{1 \leq n \leq N} \frac{\chi(n)\psi(n)}{n}\right| \ll \tau(m)\sqrt{m}\frac{\log q}{a(q)}.
\end{align*}
\end{lem}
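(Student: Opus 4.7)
The plan is to reduce the logarithmic sum of $\chi\psi$ to one for $\chi\psi^{*}$, where $\psi^{*}$ is the primitive character inducing $\psi$, and then invoke the primitive case from Lemma~\ref{CASEWITHPAR} together with the hypothesized improvement \eqref{BETTERPV}.

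Let $\psi^{*}$ be the primitive character modulo $m^{*}\mid m$ inducing $\psi$. Since $\psi$ is non-principal and no primitive characters have conductor $2$, we have $m^{*}\geq 3$; and since $\psi$ is even, so is $\psi^{*}$. Writing $\psi(n) = \psi^{*}(n)\,1_{(n,m)=1}$, expanding $1_{(n,m)=1} = \sum_{d\mid(n,m)}\mu(d)$, and putting $n = dk$, complete multiplicativity of $\chi$ and $\psi^{*}$ yields the identity
\begin{equation*}
\sum_{n \leq N}\frac{\chi(n)\psi(n)}{n} = \sum_{d \mid m}\frac{\mu(d)\chi(d)\psi^{*}(d)}{d}\sum_{k \leq N/d}\frac{\chi(k)\psi^{*}(k)}{k},
\end{equation*}
valid for every $N \geq 1$. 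Taking absolute values and using $\sum_{d\mid m}1/d \leq \tau(m)$, it suffices to establish
\begin{equation*}
\max_{1 \leq N' \leq q}\left|\sum_{k \leq N'}\frac{\chi(k)\psi^{*}(k)}{k}\right| \ll \sqrt{m}\,\frac{\log q}{a(q)}.
\end{equation*}

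For this primitive-case bound, I would apply Lemma~\ref{CASEWITHPAR} with $\psi^{*}$ in place of $\psi$ and $\ell = m^{*}$. The parity hypothesis $\chi\psi^{*}(-1) = -1$ holds since $\chi$ is odd and $\psi^{*}$ is even, and so the lemma gives
\begin{equation*}
\max_{1 \leq N' \leq q}\left|\sum_{k \leq N'}\frac{\chi(k)\psi^{*}(k)}{k}\right| \leq \frac{2\pi\phi(m^{*})}{\sqrt{q m^{*}}}\left(\max_{1 \leq t \leq q}\left|\sum_{n \leq t}\chi(n)\right| + O(\sqrt{q})\right).
\end{equation*}
Inserting \eqref{BETTERPV} and using $\phi(m^{*})/\sqrt{m^{*}} \leq \sqrt{m^{*}} \leq \sqrt{m}$, together with the Schur-type inequality $\log q / a(q) \gg 1$ (forced by the fact that any non-principal character sum is $\gg\sqrt{q}$ in magnitude) to absorb the $O(\sqrt{q})$ term, we obtain the claimed bound.

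No step looks delicate: the divisor expansion loses only the expected $\tau(m)$ factor, and the degenerate cases $m^{*} \in \{1,2\}$ are excluded by hypothesis. The substance of the lemma is thus the reduction to Lemma~\ref{CASEWITHPAR}, which itself rests on the opposite-parity application of the P\'olya Fourier expansion carried out earlier.
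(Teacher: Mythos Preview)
Your proof is correct and follows essentially the same route as the paper: reduce to the primitive character $\psi^{\ast}$ inducing $\psi$ via a M\"obius-type expansion (losing a factor of $\tau(m)$), then apply Lemma~\ref{CASEWITHPAR} and the hypothesis~\eqref{BETTERPV}. The only cosmetic difference is that the paper expands over divisors of $m/m^{\ast}$ rather than all of $m$, but since $\mu(d)\psi^{\ast}(d)$ kills the extra terms anyway, this changes nothing.
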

\begin{proof}
Factor $\psi = \psi^{\ast} \psi_0$, where $\psi^{\ast}$ is a primitive character modulo $m^{\ast}$ and $\psi_0$ is principal modulo $m/m^{\ast}$. We then have that
\begin{align*}
\max_{1 \leq N \leq q} \left|\sum_{1 \leq n \leq N} \frac{\chi(n)\psi(n)}{n}\right| \leq \sum_{d|m/m^{\ast}} \frac{1}{d}\max_{1 \leq N \leq q} \left|\sum_{1 \leq n \leq N/d} \frac{\chi(n)\psi^{\ast}(n)}{n}\right| \ll \tau(m)\max_{1 \leq N' \leq q} \left|\sum_{1 \leq n \leq N'} \frac{\chi(n)\psi^{\ast}(n)}{n}\right|.
\end{align*}
Since $\psi^{\ast}$ has the same parity as $\psi$, Lemma \ref{CASEWITHPAR} and \eqref{BETTERPV} give
\begin{align*}
\max_{1 \leq N' \leq q} \left|\sum_{1 \leq n \leq N'} \frac{\chi(n)\psi^{\ast}(n)}{n}\right| &\ll \frac{\phi(m^{\ast})}{\sqrt{m^{\ast}}} \max_{1 \leq t \leq q} \frac{1}{\sqrt{q}}\left|\sum_{1 \leq n \leq t} \chi(n)\right| \ll \sqrt{m}\frac{\log q}{a(q)},
\end{align*}
and the claim follows upon combining these two statements.
\end{proof}
\begin{lem} \label{EXPSUMBD}
Let $\chi$ be as above, and let $\psi$ be a primitive even character modulo $\ell$. Then
%Assume the hypotheses of Lemma \ref{LOGMEAN}, with $\ell^4 \leq c(\log_2 a(q)/\log_3 a(q))$, for $c > 0$ sufficiently small. Then
$$\max_{\alpha \in [0,1]} \max_{1 \leq N \leq q} \left|\sum_{1 \leq |n| \leq N} \frac{\chi(n)\psi(n)}{n}e(n\alpha)\right| \ll_{\e} \ell^{\e} (\log q)a(q)^{-1/3+\e}.$$
\end{lem}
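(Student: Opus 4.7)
I plan to reduce the problem to controlling an exponential sum of $\chi$ without the $\psi$-twist. First, Lemma \ref{Paley} reduces the task to bounding
\[
|T(\alpha)| := \left|\sum_{1 \leq |n| \leq q} \frac{\chi(n)\psi(n)}{n} e(n\alpha)\right|
\]
uniformly in $\alpha \in [0,1]$, up to an additive $O(1)$. Since $\psi$ is primitive modulo $\ell$, the Gauss sum identity $\psi(n) = \tau(\bar\psi)^{-1}\sum_{b\bmod\ell}\bar\psi(b) e(bn/\ell)$, together with an exchange of summation, yields
\[
T(\alpha) = \tau(\bar\psi)^{-1}\sum_{b\bmod\ell}\bar\psi(b) \, U(\alpha+b/\ell), \qquad U(\theta) := \sum_{1\leq |n|\leq q}\frac{\chi(n)}{n}e(n\theta).
\]
Using that $\chi$ is odd, one writes $U(\theta) = 2L_q^\ast(\chi) - V(\theta)$, with $V(\theta) := \sum_{|n|\leq q}\chi(n)(1-e(n\theta))/n$ and $L_q^\ast(\chi) := \sum_{n\leq q}\chi(n)/n$; the $L_q^\ast(\chi)$-contribution cancels upon summation against $\bar\psi$ (as $\bar\psi$ is non-principal), leaving $T(\alpha) = -\tau(\bar\psi)^{-1}\sum_{b}\bar\psi(b) V(\alpha+b/\ell)$.

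The main input is then a uniform bound $\max_{\theta}|V(\theta)| \ll \log q/a(q)$. This follows from P\'{o}lya's Fourier expansion (compare with the derivation in the proof of Lemma \ref{CASEWITHPAR}), which identifies $V(-t/q)$ with $(2\pi/\tau(\chi))\sum_{n\leq t}\chi(n)$ plus an $O(\log q/\sqrt q)$ error; hypothesis \eqref{BETTERPV} then provides $|\sum_{n\leq t}\chi(n)| \ll \sqrt q \log q/a(q)$ at the discrete frequencies $\theta = -t/q$, and a Lipschitz estimate over scales $\asymp 1/q$ extends the bound to all $\theta \in [0,1]$. Combining with $|\tau(\bar\psi)| = \sqrt\ell$ and the triangle inequality gives $|T(\alpha)| \ll \sqrt\ell \log q/a(q)$. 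This already establishes the claimed bound whenever $\ell^{1/2-\e} \ll a(q)^{2/3+\e}$, covering in particular the bounded-$\ell$ regime needed for the subsequent application to Proposition \ref{KeyProp2}, where $\ell$ is chosen to be a small fixed prime.

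The hard part lies in obtaining uniformity in $\ell$ across its full range. To address this, one would supplement the above with a Dirichlet approximation $\alpha = a/m + \beta$, $m\leq Q$, $|\beta|\leq 1/(mQ)$, and expand $e(na/m)$ in the basis of Dirichlet characters modulo $m$ via Gauss sums. Odd characters $\eta$ contribute zero to the two-sided sum (by the parity of $\chi\psi$), while the even characters $\eta$ yield logarithmic sums $\sum \chi\psi\eta(n)/n$ controlled by Lemma \ref{LOGMEAN} at size $\ll \tau(m\ell)\sqrt{m\ell}\log q/a(q)$. The principal technical difficulty is controlling the $\beta$-error: the naive estimate $O(q|\beta|)$ is ruinous under the optimal scale $Q \asymp a(q)^{2/3}/\sqrt\ell$ required to render the main term acceptable. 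Overcoming this obstacle requires Abel summation against uniform bounds on the partial sums $\sum_{n\leq t}\chi\psi(n) e(na/m)/n$, which are themselves established via the same character expansion; an optimization of $Q$ then produces the stated $\ell^\e \log q \cdot a(q)^{-1/3+\e}$.
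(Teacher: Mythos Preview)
Your first three paragraphs give a correct and elegant argument, but what you prove is
\[
\max_{\alpha,N}\left|\sum_{1\leq|n|\leq N}\frac{\chi\psi(n)}{n}e(n\alpha)\right|\ll\sqrt{\ell}\,\frac{\log q}{a(q)},
\]
which is \emph{not} the bound stated in the lemma: you save a full factor $a(q)$ rather than $a(q)^{1/3-\e}$, but you lose $\ell^{1/2}$ rather than $\ell^{\e}$. For the only application in the paper (Proposition~\ref{KeyProp2}, where $\ell=5$) your bound is perfectly adequate and in fact sharper; feeding it through Corollary~\ref{BETTERLOGMEANFORCHI} would improve Proposition~\ref{KeyProp2} to $\max_{N}|\sum_{n\leq N}\chi(n)/n|\ll(\log q)/a(q)$. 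So as a replacement for the lemma \emph{in its role within the paper}, your argument is both simpler and stronger. The trick of expanding $\psi$ by Gauss sums, cancelling the constant $2L_q^{\ast}(\chi)$ against $\sum_b\bar\psi(b)=0$, and then bounding the remaining $V(\theta)$ pointwise via P\'olya's expansion and~\eqref{BETTERPV} is quite different from what the paper does.

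The paper's proof instead runs a circle-method decomposition on $\alpha$: for minor arcs ($r>M$) it invokes Goldmakher's bound (Corollary~2.2 of~\cite{Gold}) to get $\ll M^{-1/2}(\log M)^{5/2}\log q$; for major arcs ($r\leq M$) it uses equation~(7.2) of~\cite{LamMan} to pass from $e(n\alpha)$ to $e(nb/r)$ with acceptable error, then applies the Granville--Soundararajan identity and Lemma~\ref{LOGMEAN} to get $\ll_{\e}\ell^{\e}M^{1+\e}(\log q)/a(q)$. Balancing at $M=a(q)^{2/3}$ produces the stated $a(q)^{-1/3+\e}$. The $\ell^{\e}$ uniformity comes from the fact that $\ell$ enters only through Lemma~\ref{LOGMEAN}, where it contributes $\tau(\ell r)\sqrt{\ell r}$ inside a sum that is otherwise controlled by $a(q)^{-1}$.

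Your final paragraph sketches something in this spirit but has a real gap: you do not mention any minor-arc treatment, and the minor-arc bound from~\cite{Gold} is a genuinely separate, nontrivial input (it is what forces the exponent $1/3$ rather than $1$). Your proposal to handle the $\beta$-error by ``Abel summation against uniform bounds on the partial sums'' is too vague to be a proof; the paper instead cites~(7.2) of~\cite{LamMan}, which performs a truncation at $N'=\min(q,|r\alpha-b|^{-1})$ and is not simply Abel summation. Without a minor-arc estimate, your major-arc argument alone cannot yield the $\ell^{\e}$ bound as stated.
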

\begin{proof}
By Lemma \ref{Paley}, we see that the LHS here is
$$\max_{\alpha\in [0,1]} \left|\sum_{1 \leq |n| \leq q} \frac{\chi(n)\psi(n)e(n\alpha)}{n}\right| + O(1).$$
Let $R = (\log q)^5$, and let $1 \leq M \leq R$ be a parameter to be chosen. By the Dirichlet approximation theorem, for any $\alpha \in [0,1]$ we can find a rational $b/r$ such that $|\alpha-b/r| \leq 1/(rR)$. We will say that $\alpha$ is \emph{minor arc} if $M < r \leq R$, and \emph{major arc} if $1 \leq r \leq M$. \\
Suppose first that $\alpha$ is minor arc. By Corollary 2.2 of \cite{Gold}, we get
$$\sum_{1 \leq |n| \leq q} \frac{\chi(n)\psi(n)}{n} e(n\alpha) \ll \frac{(\log M)^{5/2}}{\sqrt{M}}(\log q) + \log R + \log_2 q \ll \frac{(\log M)^{5/2}}{\sqrt{M}}\log q + \log_2 q.$$
%f_1(q)^{\e} \frac{\log q}{\sqrt{f_1(q)}} + \log_2 q \ll f_1(q)^{\e}\frac{\log q}{\sqrt{f_1(q)}},$$ 
%for any fixed $A  \geq 3$. \\
Now, suppose $|\alpha-b/r| \leq 1/(rR)$, with $r \leq M$. Then applying equation (7.2) in \cite{LamMan} with  $N':= \min\{q,|r\alpha-b|^{-1}\}$, we have
\begin{align*}
\sum_{1 \leq |n| \leq q}\frac{\chi(n)\psi(n)}{n}e(n\alpha) &= \sum_{1 \leq n \leq N'} \frac{\chi(n)\psi(n)}{n}e(nb/r) + O\left(\frac{(\log R)^{3/2}}{\sqrt{R}}(\log q)^2 + \log_2 q\right) \\
&= \sum_{1 \leq |n| \leq N'} \frac{\chi(n)\psi(n)}{n}e(bn/r) + O\left(\log\log q\right).
\end{align*}
We now invoke the Granville-Soundararajan identity (see Proposition 2.3 of \cite{GrSo}), which reads
\begin{align}
&\sum_{1 \leq |n| \leq N'} \frac{\chi(n)\psi(n)}{n}e(bn/r) \nonumber\\
&= \sum_{d|r} \frac{\chi(d)\psi(d)}{d} \cdot \frac{1}{\phi(r/d)} \sum_{\psi' \text{ mod } r/d}(1-\chi\psi(-1)\psi'(-1)) \tau(\psi') \bar{\psi'}(b) \cdot \sum_{1 \leq |n| \leq N'/d} \frac{\chi(n)\psi(n)\bar{\psi'}(n)}{n} \nonumber\\
&= 2\sum_{d|r} \frac{\chi(d)\psi(d)}{d} \cdot \frac{1}{\phi(r/d)} \sum_{\psi' \text{ mod } r/d \atop \psi'(-1) = 1}\tau(\psi') \bar{\psi'}(b) \cdot \sum_{1 \leq |n| \leq N'/d} \frac{\chi(n)\psi(n)\bar{\psi'}(n)}{n},\label{GRSOID}
\end{align}
as $\chi\psi(-1) = -1$. \\
 %Note in particular that for those $\psi'$ in the above sum, $\psi\bar{\psi'}(-1) = -\chi(-1)$, and thus our previous results concerning the correlations of $\chi$ with characters of opposite parity may still be applied to the partial sums of $\chi \psi\bar{\psi'}$. \\
Now, $\psi\bar{\psi'}$ has modulus $\leq \ell r$,
%, so it has order $\leq \ell r$ as well. 
and $\psi\bar{\psi'}$ is even, thus of opposite parity to $\chi$.
%
%
%By the Granville-Sound identity, we get
%$$\sum_{1 \leq n \leq N} \frac{\chi(n)\xi(n)}{n}e(nb/r)  = \sum_{d|r} \frac{\chi(d)\xi(d)}{d}\cdot \frac{1}{\phi(r/d)} \sum_{\psi(r/d) \atop \psi(-1) = 1} \tau(\psi)\bar{\psi}(b) \cdot \sum_{1 \leq n \leq N/d} \frac{\chi(n)\xi(n)\bar{\psi}(n)}{n}.$$
%As mentioned above, $\xi\bar{\psi}$ is an odd character with modulus $\leq f_1(q)$. 
Hence, applying Lemma \ref{LOGMEAN} to the inner sums in \eqref{GRSOID} gives%we see that the RHS of this last expression is 
\begin{align*}
\left|\sum_{1 \leq |n| \leq N'} \frac{\chi(n)\psi(n)}{n} e(bn/r)\right| &\ll \sum_{d|r} \frac{\sqrt{r/d}}{d\phi(r/d)}\sum_{\psi'\pmod{r/d} \atop \psi'(-1) =1} \max_{1 \leq N'' \leq q} \left|\sum_{1 \leq n \leq N''} \frac{\chi(n)\psi(n)\bar{\psi'}(n)}{n}\right| \\
&\ll \tau(\ell r)\sqrt{\ell r}\frac{\log q}{a(q)}\cdot \sum_{d|r} \frac{\sqrt{r/d}}{d} \ll_{\e} \ell^{\e}M^{1+\e} \frac{\log q}{a(q)}.
\end{align*}
%
%
%$$\ll_{g,\e} f_1(q)^{\e} \frac{\log q}{f_2(q)} \sum_{d|r} \frac{1}{d\phi(r/d)} \sum_{\psi(r/d)} |\tau(\psi)| \ll f_1(q)^{\e} \frac{\log q}{f_2(q)} \sqrt{r}\sum_{d|r} \frac{1}{d^{3/2}} \ll f_1(q)^{1/2+\e} \frac{\log q}{f_2(q)}.$$
It follows that when $\alpha$ is major arc, we get that
$$\sum_{1 \leq |n| \leq q} \frac{\chi(n)\psi(n)}{n}e(n\alpha) \ll_{\e} \ell^{\e}M^{1+\e} \frac{\log q}{a(q)} + \log_2 q.$$
%(\log q) \left(\frac{1}{f_1(q)} + \frac{f_1(q)^{1/2+\e}}{f_2(q)}\right).$$
Combining this with the minor arc case yields
$$\max_{\alpha \in [0,1]} \max_{1 \leq N \leq q} \left|\sum_{1 \leq n \leq N} \frac{\chi(n)\psi(n)}{n}e(n\alpha)\right| \ll_{\e} (\log q)\left(\frac{(\log M)^{5/2}}{\sqrt{M}} + \ell^{\e}M^{1+\e}a(q)^{-1}\right) + \log_2 q.$$
Choosing $M = a(q)^{2/3}$, the upper bound above becomes
\begin{align*}
\max_{\alpha \in [0,1]} \max_{1 \leq N \leq q} \left|\sum_{1 \leq n \leq N} \frac{\chi(n)\psi(n)}{n}e(n\alpha)\right| &\ll_{\e} \ell^{\e}(\log q)a(q)^{-1/3+\e}.
\end{align*}
This completes the proof.
\end{proof}
\begin{proof}[Proof of Proposition \ref{KeyProp2}]
Let $\ell = 5$ and let $\psi = \left(\frac{\cdot}{\ell}\right)$. Then $\psi$ is an even, primitive character with prime conductor. Combining Corollary \ref{BETTERLOGMEANFORCHI} with Lemma \ref{EXPSUMBD}, we get that
\begin{align*}
\max_{1 \leq N \leq q} \left|\sum_{1 \leq n \leq N} \frac{\chi(n)}{n}\right| &\ll \sqrt{\ell} \max_{\alpha \in [0,1]} \max_{1 \leq N \leq q} \left|\sum_{1 \leq |n| \leq N} \frac{\chi(n)\psi(n)}{n}e(n\alpha)\right| \ll_{\e} (\log q)a(q)^{-1/3+\e}.
\end{align*}
This completes the proof.
\end{proof}
\begin{proof}[Proof of Theorem \ref{THMGEN}]
%Set $\delta_g := 1-\frac{g}{\pi}\sin(\pi/g)$. This constant arises from the work of \cite{GrSo}, \cite{Gold} and \cite{LamMan}, the latter of which implies (in a weak form) that 
%\begin{equation}\label{BETTERPV}
%\max_{1 \leq t \leq q} \left|\sum_{n \leq t} \chi(n)\right| \ll_g \sqrt{q}(\log q)^{1-\delta_g}.
%\end{equation}
The result is immediate (by the P\'{o}lya-Vinogradov inequality) if $t > q$, so in what follows, we shall assume that $t \leq q$. \\
In keeping with previous notation, set
$$\xi(t) := \left(\frac{\log a(t)}{13\log_2 a(t)}\right)^{\frac{1}{19g^2}},$$
and $f(t) := a(t)^{1/6}$. Note that this is non-decreasing since $a$ is. By Lemma \ref{Reduce}, the claim of the proposition is trivial if $\sum_{p|q} 1/p \geq \log \xi(q)$. Thus, we suppose henceforth that $q \in \mc{Q}(\xi)$. \\
Assume that 
$$\left|\frac{1}{t}\sum_{n \leq t} \chi(n)\right| > 1/\xi(t),$$
for some $t \in (q^{1/f(q)},q]$, noting that $q^{1/f(q)} > \exp(\sqrt{\log q})$ since $a(t) \leq \log t$ for all large $t$. By Proposition \ref{CESTOLOG} a) (with $k = g$), it follows that
$$\max_{\sqrt{t}< x \leq t} \left|\frac{1}{\log x}\sum_{n \leq x} \frac{\chi(n)}{n} \right| \gg \xi(t)^{-38g^2\xi(t)^{19g^2}} \geq \xi(q)^{-38g^2\xi(q)^{19g^2}}.$$
Let $N$ be the point in $(\sqrt{t},t]$ maximizing the LHS in this last expression. By Proposition \ref{KeyProp2}, the above is
\begin{align*}
\frac{1}{\log N}\left|\sum_{1 \leq n \leq N} \frac{\chi(n)}{n}\right| \ll \frac{f(q)}{\log q} \max_{1 \leq N' \leq q} \left|\sum_{1 \leq n \leq N'} \frac{\chi(n)}{n}\right| \ll_{\e} f(q)a(q)^{-1/3+\e}.
\end{align*}
Combining this with the above, rearranging and taking logarithms, we get
\begin{align*}
(1/6-\e) \log a(q) = (1/3-\e)\log a(q)-\log f(q) \leq 38g^2\xi(q)^{19g^2} \log \xi(q) + O_{\e}(1).
\end{align*}
On the other hand, as $\xi(q)^{19g^2} = \frac{\log a(q)}{13\log\log a(q)}$, the upper bound here is at most $(2/13) \log a(q)$ for large $q$, a contradiction for $\e$ sufficiently small. \\ 
This contradiction implies that for all $t > q^{1/f(q)}$, we get
$$\left|\sum_{n \leq t} \chi(n)\right| \leq t/\xi(t),$$
which implies the claim for all $q\in \mc{Q}(\xi)$ as well. The theorem is thus proved.
\end{proof}
\begin{rem}\label{RemGSBurg}
Note that if $\chi$ is assumed to be of odd order $g \geq 3$ then \eqref{BETTERPV} holds with $a(t) = (\log t)^{\delta_g}$ (see Remark \ref{RemOddOrd} above). 
%For this choice of $a$, it follows that for large $q$, $\sum_{p|q} 1/p \leq \log\log\log q + O(1) \leq \log a(q)$, and Proposition \ref{GOALPROVED} applies. 
The above proof can be modified to show cancellation in short sums of characters of the form $\chi(n)\psi(n)$, where $\psi$ is an odd character of small conductor. To do this, we combine Lemma \ref{LOGMEAN} with Proposition \ref{CESTOLOG} instead, which shows that if $t > \exp((\log t)^{1-\delta_g/2})$ and the conductor $\ell$ of $\psi$ is bounded then
\begin{align*}
\sum_{n \leq t} \chi(n)\psi(n) \ll_g t\left(\frac{\log_3 t}{\log_2 t}\right)^{\frac{1}{19(g\ell)^2}}.
\end{align*}
In fact, as $g$ is odd we can use part b) of Proposition \ref{CESTOLOG} instead of a) to replace the above bound by one of the shape $O_g\left(t(\log t)^{-c/(g\ell)^2}\right)$, with $c > 0$ sufficiently small in this same range of $t$. However, as mentioned in the introduction, cancellation in character sums of this kind can be shown to follow from Corollary 1.7 of \cite{GSBurg}. For instance, assume that $\psi$ is a quadratic character. As $g$ is odd we have $(\chi \psi)^g = \psi 1_{(n,q) = 1}$, which is a character of very small conductor relative to $q$, and thus necessarily witnessing cancellation along intervals $[1,t]$ at scales $t > q^{\e}$. Corollary 1.7 of \cite{GSBurg} then shows that this cancellation implies cancellation in the partial sums of $\chi \left(\frac{\cdot}{\ell}\right)$ itself.
\end{rem}
\section{A Converse of Theorem \ref{THMGEN}} \label{ConvSec}
In this section, we show that a converse of Theorem \ref{THMGEN}, namely Proposition \ref{Conv} is true. That is, we show that cancellation in short sums of primitive characters of a fixed order imply improvements in the P\'{o}lya-Vinogradov inequality for characters of that same fixed order. \\
%More precisely, we have the following.
%\begin{prop}
%Let $q$ be large and $g$ fixed. Let $1 > \delta \gg (\log q)^{-c}$, for some small constant $c = c(g) > 0$. Let $\chi$ be an odd primitive character modulo $q$, and suppose that 
%\begin{align*}
%\max_{1 \leq t \leq q} \left|\sum_{n \leq t} \chi(n) \right| > \delta \sqrt{q}\log q.
%\end{align*}
%Then, writing $\e := \delta/\log(1/\delta)$, there is a unique even character $\psi$ of order $k$ with $k|g$ and a $t > q^{\e}$ such that
%$$\sum_{n \leq t} \chi(n)\bar{\psi}(n) \gg_{\delta} t.$$
%In particular, if $\sum_{n \leq t} \chi'(n) = o(t)$ for $t > q^{\e}$ for any odd character $\chi$ of order $g$ then $\max_{1 \leq t \leq q} \left|\sum_{n \leq t} \chi(n)\right| = o(\sqrt{q}\log q)$ for any odd character $\chi$ of order $g$.
%%Then for any primitive character $\chi$ modulo $q$ of order $k$ we have $$\max_{1 \leq t \leq q} \left|\sum_{n \leq t} \chi(n)\right| \leq \e \log(1/\epsilon) \sqrt{q}\log q.$$ 
%\end{prop}
To do this, we need two lemmata. The first is a very slight generalization of a result
%\footnote{The result stated in \cite{GSBurg} holds for $t$ the minimizer for the distance over the interval $[-\log x,\log x]$, whereas we shall consider a minimizer for a narrower interval. It is for this reason that we rewrite the proof in a way that does not depend on the nature of $t$.} 
of Granville-Soundarajan \cite{GSBurg}.
% the proof of which is nearly identical to that found there.
\begin{lem} \label{GS}
Let $t \in \mb{R}$ and let $x$ be large. Let $f: \mb{N} \ra \mb{U}$ be multiplicative. Put $\lambda := \mb{D}(f,n^{it};x)^2 + \log(1+|t|) + c$ for some large enough absolute constant, and $\eta = (\lambda e^{\lambda})^{-1}$. Then there is a $x^{\eta} \leq y \leq x$ such that
$$\left|\frac{1}{y}\sum_{n \leq y} f(n)\right| \gg \frac{e^{-\mb{D}(f,n^{it};x)^2}}{|1+it|}.$$
\end{lem}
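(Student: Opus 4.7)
My plan is to follow the standard Granville--Soundararajan strategy: obtain the required scale $y$ by contrapositive, arguing that if the C\'{e}saro mean of $f$ were uniformly small on every scale in $[x^\eta,x]$ then the twisted logarithmic mean
\begin{align*}
T(x) := \frac{1}{\log x}\sum_{n \le x} \frac{f(n)}{n^{1+it}}
\end{align*}
would also be small, contradicting a pretentious lower bound on $T(x)$. Two ingredients drive the argument: first, a Mertens/Euler-product lower bound $|T(x)| \gg e^{-\mb{D}(f,n^{it};x)^2}$, and second, Abel summation relating $T(x)$ back to the C\'{e}saro sums $S(u) := \sum_{n \le u} f(n)$.

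For the lower bound on $T(x)$, I would relate the truncated Dirichlet series $\sum_{n \le x} f(n)n^{-1-it}$ to the partial Euler product $\prod_{p \le x}(1-f(p)p^{-1-it})^{-1}$ via a Rankin-type convolution argument, factor out the Mertens product $\prod_{p \le x}(1-1/p)^{-1} \asymp \log x$, and observe that the residual product $\prod_{p \le x}(1-1/p)(1-f(p)p^{-1-it})^{-1}$ has modulus $\gg \exp(-\mb{D}(f,n^{it};x)^2)$ by definition of the pretentious distance (after handling the prime-power contributions). The extra $\log(1+|t|)$ inside $\lambda$ absorbs the cost of truncating the Dirichlet series at a height commensurate with $1+|t|$, while the additive $c$ in $\lambda$ provides the slack needed in the Mertens-type comparison.

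For the contradiction step, suppose that $|S(y)|/y \le c_0 e^{-D^2}/(1+|t|)$ for every $y \in [x^\eta, x]$, where $D := \mb{D}(f,n^{it};x)$ and $c_0$ is a small constant. Abel summation yields
\begin{align*}
\sum_{n \le x} \frac{f(n)}{n^{1+it}} = \frac{S(x)}{x^{1+it}} + (1+it) \int_1^x \frac{S(u)}{u^{2+it}}\, du.
\end{align*}
Splitting the integral at $u = x^\eta$, using $|S(u)| \le u$ on $[1,x^\eta]$ together with the assumed C\'{e}saro bound on $[x^\eta,x]$, and dividing by $\log x$, one arrives at
\begin{align*}
|T(x)| \ll (1+|t|)\eta + c_0 e^{-D^2} + \frac{c_0 e^{-D^2}}{(1+|t|)\log x}.
\end{align*}
With $\eta = (\lambda e^\lambda)^{-1}$ we have $(1+|t|)\eta \asymp e^{-D^2}/\lambda$, which is strictly smaller than the lower bound $|T(x)| \gg e^{-D^2}$ once $c_0$ is chosen sufficiently small, yielding the contradiction and so the existence of some $y \in [x^\eta, x]$ with $|S(y)|/y \gg e^{-D^2}/(1+|t|)$.

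The main obstacle I anticipate is pinning down the precise constants. In particular, one must ensure that the Euler-product lower bound for $T(x)$ holds uniformly in $\lambda$ with an implied constant independent of $t$, and that the C\'{e}saro lower bound in the conclusion has the right dependence $e^{-D^2}/(1+|t|)$ rather than the weaker $e^{-D^2}/(1+|t|)^2$ that a naive execution of the partial summation would produce. The $\lambda^{-1}$ factor inside $\eta$ provides the cushioning required to absorb auxiliary losses from the Dirichlet-series truncation and from the boundary term in Abel summation, while the additive constant $c$ in $\lambda$ ensures that all asymptotic constants in the Euler-product comparison are handled uniformly.
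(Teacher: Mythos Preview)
The paper's own proof is simply a citation to \cite{GSBurg}, with the remark that the argument there (stated for the minimizing $t \in [-\log x,\log x]$) works verbatim for any $t \in \mb{R}$. Your outline is a faithful reconstruction of that Granville--Soundararajan argument --- contrapositive via partial summation against a pretentious lower bound on the twisted logarithmic mean $T(x)$ --- so the approaches coincide. The one step that requires genuine work (as you correctly flag) is the uniform lower bound $|T(x)| \gg e^{-\mb{D}(f,n^{it};x)^2}$; in \cite{GSBurg} this is secured by comparing the truncated Dirichlet series $\sum_{n\le x} f(n)n^{-1-it}$ with its partial Euler product, and once that is in hand your partial-summation upper bound together with the choice $\eta = (\lambda e^{\lambda})^{-1}$ yields the contradiction exactly as you describe.
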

\begin{proof}
The result in \cite{GSBurg} was stated for $t$ the minimizer of the distance $\mb{D}(f,n\mapsto n^{it};x)$ in the interval $[-\log x,\log x]$. However, the proof follows identically for any choice of $t \in \mb{R}$. 
\end{proof}
%From Euler products, we have (with $\delta = \lambda/\log x$)
%\begin{align*}
%\sum_{n \geq 1} \frac{f(n)}{n^{1+\delta+it}} \asymp \frac{\log x}{\lambda}\exp\left(-\sum_{p \leq x^{1/\lambda}} \frac{1-\text{Re}(f(p)p^{-it})}{p}\right) = \frac{\log x}{\lambda}e^{-\mb{D}(f,n^{it};x^{1/\lambda})^2} \geq \frac{\log x}{\lambda} e^{-\mb{D}(f,n^{it};x)^2}.
%\end{align*}
%On the other hand, by partial summation (and $|f| \leq 1$)
%$$\sum_{n \geq 1} \frac{f(n)}{n^{1+\delta+it}} = \int_1^{\infty} u^{-(1+\delta+it)} d\{\sum_{n \leq u} f(n)\} = (1+\delta+it) \int_1^{\infty} u^{-(1+\delta+it)} \left(\frac{1}{u}\sum_{n \leq u} f(n)\right) du.$$
%If we assume the conclusion is false then
%\begin{align*}
%&\frac{\log x}{\lambda}e^{-\mb{D}(f,n^{it};x)^2} \ll |1+\delta+it| \left(\int_1^{x^{\eta}} u^{-1-\delta} du + \frac{e^{-\mb{D}(f,n^{it};x)^2}}{|1+it|} \int_{x^{\eta}}^x u^{-1-\delta}du + \int_x^{\infty} u^{-1-\delta} du\right) \\
%&\ll |1+\delta + it|\frac{\log x}{\lambda} \left((1-e^{-\eta\lambda}) + e^{-\lambda}(e^{-\eta\lambda} - e^{-\lambda}) + e^{-\lambda}\right) \ll e^{-c} e^{-\mb{D}(f,n^{it};x)^2} \frac{\log x}{\lambda},
%\end{align*}
%which is false for $c$ sufficiently large.
To glean a bit more information about characters that correlate with one another, we establish the following simple result.
\begin{lem}\label{Orders}
Let $\chi$ be a primitive character of order $g$ to a large modulus $q$, and suppose $\psi$ is a primitive character of order $k$ modulo $m$, with $m \leq \log q$. Then for any $0 \leq T \leq \log q$ at most one of the following holds: \\
i) $\mc{D}_{\chi\bar{\psi}}(q; T) \leq \frac{1}{3(gk)^2}\log\log q$, or else\\
ii) $k\nmid g$.\\
Moreover, i) can hold for at most one primitive character $\psi$ with conductor of size $\leq \log q$. 
%In particular, if $\chi$ is quadratic and i) holds for some primitive character $\psi$ then $\psi$ must be either quadratic or trivial. 
\end{lem}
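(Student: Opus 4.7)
The plan is to establish the first dichotomy by contradiction: assuming (i), show that $\bar{\psi}^g$ must be principal, forcing $k \mid g$. The mechanism is to raise $\chi\bar{\psi}$ to the $g$th power to kill the $\chi$-component, exposing the residual character $\bar{\psi}^g$, which is non-principal exactly when $k \nmid g$. One then invokes the rigidity of non-principal characters of small conductor under pretentious distance: such characters cannot be pretentious to any $n^{iu}$ with polylogarithmic $|u|$. The uniqueness claim follows from a triangle-inequality argument reducing to the same rigidity applied to $\bar{\psi}_1\psi_2$.

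Concretely, fix $|t_0| \le T$ with $\mb{D}(\chi\bar{\psi}, n^{it_0}; q)^2 \le \tfrac{1}{3(gk)^2}\log\log q$. The standard power inequality $\mb{D}(f^g, h^g; q) \le g\,\mb{D}(f, h; q)$ (immediate from $|1-z^g|\le g|1-z|$ for $|z|\le 1$) yields $\mb{D}((\chi\bar{\psi})^g, n^{igt_0}; q)^2 \le \tfrac{1}{3k^2}\log\log q$. Since $\chi(p)^g = 1$ for all $p \nmid q$, the functions $(\chi\bar{\psi})^g$ and $\bar{\psi}^g$ coincide on primes coprime to $q$, and their pretentious distance is controlled by $\sum_{p\mid q}1/p = O(\log_3 q)$. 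The triangle inequality then gives
\[
\mb{D}(\bar{\psi}^g, n^{igt_0}; q)^2 \;\le\; \tfrac{1}{3k^2}\log\log q + o(\log\log q).
\]

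To derive a contradiction when $\bar{\psi}^g$ is non-principal (equivalent to $k \nmid g$), I invoke a matching lower bound: for any non-principal primitive character $\chi^*$ of conductor $m^* \le (\log q)^2$ and any real $|u| \le g\log q$,
\[
\mb{D}(\chi^*, n^{iu}; q)^2 \;=\; \log\log q + O(1) - \text{Re}\log L(1+iu, \chi^*) \;\ge\; (1-o(1))\log\log q,
\]
using the classical convexity bound $|L(1+iu,\chi^*)| \ll \log(m^*(|u|+2))$ and the zero-free region lower bound for $L(1+iu, \chi^*)$, together with Siegel's theorem $L(1,\chi^*) \gg_\varepsilon (m^*)^{-\varepsilon}$ for the potentially exceptional real characters near $u = 0$. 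Applied to $\chi^* = \bar{\psi}^g$, this contradicts the upper bound above since $\tfrac{1}{3k^2} < 1$, so $k \mid g$.

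For uniqueness, suppose distinct primitive $\psi_1, \psi_2$ of conductor $\le \log q$ both satisfy (i), with associated shifts $t_1, t_2 \in [-T, T]$. The triangle inequality for $\mb{D}$ applied to $\chi\bar{\psi}_1 \cdot \bar{\chi}\psi_2 = |\chi|^2 \bar{\psi}_1\psi_2$ (which agrees with $\bar{\psi}_1\psi_2$ outside primes dividing $q$) gives
\[
\mb{D}(\bar{\psi}_1\psi_2, n^{i(t_1-t_2)}; q)^2 \;\le\; \tfrac{4}{3g^2}\log\log q + o(\log\log q).
\]
But $\bar{\psi}_1\psi_2$ is induced by a non-principal primitive character of conductor at most $(\log q)^2$, so the lower bound above yields $\ge (1-o(1))\log\log q$. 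Since $g \ge 2$ forces $\tfrac{4}{3g^2} \le \tfrac{1}{3}$, this contradiction gives $\psi_1 = \psi_2$.

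The principal obstacle is the uniformity of the lower bound on $\mb{D}(\chi^*, n^{iu}; q)^2$ across conductors $m^* \le (\log q)^2$ and shifts $|u| \le g\log q$: real characters near $u = 0$ compel the use of Siegel's ineffective bound, but choosing $\varepsilon$ sufficiently small ensures $|\log L(1, \chi^*)| \le \varepsilon \log\log q + O(1)$, preserving the $(1 - o(1))\log\log q$ lower bound needed for both steps.
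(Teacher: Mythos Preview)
Your proof is correct and follows essentially the same skeleton as the paper's---eliminate $\chi$ by raising to a suitable power, then invoke rigidity of non-principal small-conductor characters via an $L$-function bound---but your execution is cleaner in two places. For the dichotomy, you raise $\chi\bar\psi$ directly to the $g$th power, whereas the paper introduces a B\'ezout coefficient $s$ (with $rk^\ast+sg^\ast=1$) and works with $\psi^{(k,g)}$ rather than $\psi^g$; your route is more direct and loses nothing. For uniqueness, you argue by the pretentious triangle inequality applied to $\bar\psi_1\psi_2$, while the paper quotes the repulsion lemma of Balog--Granville--Soundararajan; again your argument is more self-contained and gives a comparable numerical threshold.

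One remark: your appeal to Siegel's theorem and the zero-free region in establishing the lower bound $\mb{D}(\chi^\ast,n^{iu};q)^2\ge(1-o(1))\log\log q$ is superfluous. That lower bound requires only an \emph{upper} bound on $\log|L(1+1/\log q+iu,\chi^\ast)|$, and the convexity bound $|L|\ll\log(m^\ast(|u|+2))$ already gives $\log|L|=O(\log_3 q)$. Lower bounds on $|L|$ (Siegel, zero-free region) would be relevant for \emph{upper} bounds on $\mb{D}$, not lower bounds; a Siegel zero would only make $\mb{D}$ larger. The paper likewise overcites here (its reference to ``the effective version of Siegel's theorem'' and the $\tfrac12\log m$ loss are both unnecessary for the direction needed). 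Stripping this out, your argument stands and is in fact tighter than the paper's, yielding the constant $1-o(1)$ in place of the paper's $\tfrac12$ (first part) and $\tfrac14$ (uniqueness).
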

In the proof to follow, given $x,r \geq 2$ and 1-bounded multiplicative functions $f_1,f_2: \mb{N} \ra \mb{U}$, we define the distance function
\begin{align*}
\mb{D}_r(f_1,f_2;x) := \left(\sum_{p \leq x \atop p \nmid r} \frac{1-\text{Re}(f_1(p)\bar{f_2}(p))}{p}\right)^{\frac{1}{2}}.
\end{align*}
\begin{proof}
Assume instead that both i) and ii) hold. Let $t_0 \in [-T,T]$ minimize the distance here. Set $k^{\ast} := k/(k,g)$ and $g^{\ast} := g/(k,g)$, noting that $k^{\ast} > 1$ by assumption. As $k^{\ast}$ and $g^{\ast}$ are coprime, we can choose $r,s \in \mb{Z}$ with $0 < s < k^{\ast}$ such that $rk^{\ast} + sg^{\ast} = 1$. Now, set $t_0' := sgt_0$. We then have
\begin{align*}
\mb{D}(\psi^{(k,g)},n^{it_0'};q)^2 &= \mb{D}_{qm}(\psi^{(k,g)(rk^{\ast} + sg^{\ast})},n^{it_0'};q)^2 +O\left(\sum_{p|qm} \frac{1}{p}\right) \\
&= \mb{D}_{qm}(\psi^{sg},n^{it_0'};q)^2 +O\left(\log\log\log q\right)\\
&= \mb{D}_{qm}((\psi \bar{\chi})^{sg},n^{it_0'};q)^2 + O(\log\log\log q).
\end{align*}
By the triangle inequality, the bound $s < k$ and i), we have
\begin{align*}
\mb{D}_{qm}((\psi\bar{\chi})^{sg},n^{it_0'};q) &= \mb{D}(\psi^{sg},(\chi n^{it_0})^{sg};q) +O(\log\log\log q)\\
&\leq sg\mb{D}(\psi,\chi n^{it_0};q) +O(\log\log\log q)= s g\mc{D}_{\chi\bar{\psi}}(q;T)^{\frac{1}{2}} +O(\log\log\log q)\\
&\leq \left(\frac{s}{\sqrt{3}k} + o(1)\right)\sqrt{\log\log q} < \left(\frac{1}{\sqrt{3}}+o(1)\right)\sqrt{\log\log q}.
\end{align*}
On the other hand, as $(k,g) < k$, $\psi^{(k,g)}$ is a non-principal character modulo $m$ so that the effective version of Siegel's theorem (see, e.g., Theorem 8.21 of \cite{Ten2}) gives
\begin{align*}
\mb{D}(\psi^{(k,g)},n^{it_0'};q)^2 &= \log\log q - \log|L(1+1/\log x + it_0',\psi^{(k,g)})| + O(1) \\
&\geq \log\log q - \frac{1}{2}\log m - O(\log\log(kgm(T+2))+\sum_{p|m} 1/p) \\
&\geq \frac{1}{2}\log \log q - O(\log\log\log q),
\end{align*}
This yields the contradiction
\begin{align*}
\left(\frac{1}{2}-o(1)\right)\log\log q \leq \mb{D}(\psi^{(k,g)}, n^{it_0'};x)^2 \leq \left(\frac{1}{3}+o(1)\right)\log\log q,
\end{align*}
and proves the first claim. \\
For the uniqueness of $\psi$ we apply Lemma 3.1 of \cite{BaGrSo} (with $j = 2$), which shows that for any two characters $\psi_1,\psi_2$ to moduli of size $O(\log q)$, we have
\begin{equation*}
\max\{\mc{D}_{\chi\bar{\psi}_1}(q;T),\mc{D}_{\chi\bar{\psi}_2}(q;T)\} \geq \left(1-\frac{1}{\sqrt{2}}-o(1)\right)\log\log q \geq \frac{1}{4}\log\log q,
\end{equation*}
for $q$ large enough. Thus, since $g \geq 2$, at most one character $\psi$ of conductor $\leq \log q$ is such that 
$$\mc{D}_{\chi\bar{\psi}}(q;T) \leq \frac{1}{3(kg)^2}\log\log q \leq \frac{1}{12}\log\log q,$$
as required.
%The last claim follows immediately from the negation of ii) in the case $g =2$.
\end{proof}
\begin{proof}[Proof of Proposition \ref{Conv}]
We begin by considering the first claim of the proposition, and assume the contrary. Then we can find a primitive character $\chi$ such that 
$$\max_{1 \leq t \leq q} \left|\sum_{1 \leq n \leq t} \chi(n)\right| \geq \delta\sqrt{q}\log q,$$
where we have put $\delta := \e\log(1/\e)$. By the P\'{o}lya Fourier expansion (see Lemma \ref{CASEWITHPAR}) and primitivity, it follows that
$$\sup_{\alpha \in [0,1]} \left|\sum_{1 \leq |n| \leq q} \frac{\chi(n)(1-e(n\alpha))}{n}\right| \geq \frac{\delta}{2}\log q.$$
By the triangle inequality, 
$$\sup_{\beta \in [0,1]} \left|\sum_{1 \leq |n| \leq q} \frac{\chi(n)e(n\beta)}{n}\right| \geq \frac{\delta}{4} \log q.$$
Conversely, equation (7.1) in \cite{LamMan} (with $y = Q = q$) shows that
$$\sup_{\beta \in [0,1]} \left|\sum_{1 \leq |n| \leq q} \frac{\chi(n)e(n\beta)}{n} \right| \ll \frac{\sqrt{m}}{\phi(m)} (\log q)\exp\left(-\mc{D}_{\chi\bar{\psi}}(q; (\log q)^{-7/11})\right) + (\log q)^{9/11},$$
where $\psi$ is a primitive character to a conductor $m \leq \log q$ that is of opposite parity from $\chi$ and that minimizes the distance $\mc{D}_{\chi\bar{\psi}}(q;(\log q)^{-7/11})$. 
As $\delta \gg (\log q)^{-1/11}$, it follows that there must be a primitive character $\psi$ modulo $m$ with $m \ll \delta^{-2}$ of opposite parity from $\chi$ such that
$$\frac{\sqrt{m}}{\phi(m)} (\log q) e^{-\mc{D}_{\chi\bar{\psi}}(q;(\log q)^{-7/11})} \gg \delta \log q,$$
whence we get
$$\mc{D}_{\chi\bar{\psi}}(q;(\log q)^{-7/11})  \leq \log(1/\delta) + O(1) \leq \frac{1}{3(mg)^2}\log\log q + O(1),$$
in light of our assumed lower bound $\delta \geq Cg\left(\frac{\log_3 q}{\log_2 q}\right)^{1/2}$, with $C>0$ sufficiently large. Lemma \ref{Orders} then implies that $\text{ord}(\psi)|\text{ord}(\chi)$. \\
Let $t_0$ be the choice of $t \in [-(\log q)^{-7/11},(\log q)^{-7/11}]$ that minimizes $t\mapsto \mb{D}(\chi\bar{\psi},n^{it};q)^2$. By Lemma \ref{GS}, we can set $\lambda = \mb{D}(\chi\bar{\psi},n^{it_0};q)^2 + c$ for some large $c > 0$ to get that for some $y \in (q^{(\lambda e^{\lambda})^{-1}},q]$,
\begin{align*}
\frac{1}{y} \sum_{n \leq y} \chi(n)\bar{\psi}(n)\gg e^{-\lambda} \gg e^{-\mc{D}_{\chi\bar{\psi}}(q;(\log q)^{-7/11})} \gg_{\delta} 1.
\end{align*}
As $\lambda \leq \log(1/\delta) + O(1)$, we have $y > q^{\e}$ with $\e > \lambda^{-1}e^{-\lambda} \gg \delta/\log(1/\delta)$. \\
To complete the proof of the first claim, it remains to show that $\psi$ can be selected independently of $\delta$.
Indeed, suppose $\psi_{1}$ and $\psi_{2}$ are primitive characters to respective conductors $m_1,m_2$ that satisfy the conclusion of the first claim for parameters $\delta_1, \delta_2 > (\log_2 q)^{-1/2}$. As mentioned above, we have $m_j \ll \delta_j^{-2}$ for $j = 1,2$. Let $t_1$ and $t_2$ be the corresponding scales at which the partial sums of $\chi\bar{\psi}_{1}$ and $\chi\bar{\psi}_{2}$ are large in the above sense, and let $y := \max\{t_1,t_2\}$. By \eqref{HMT} we have that 
$$\mc{D}_{\chi\bar{\psi}_j}(y;(\log y)^2) \ll \log(1/\delta_j),$$
for $j = 1,2$. The triangle inequality for $\mb{D}$ thus gives that for some $u \in [-2(\log y)^2,2(\log y)^2]$, 
$$\mb{D}(\psi_1,\chi_0\psi_2n^{iu};y)^2 \ll \log(1/\delta_1) + \log(1/\delta_2) = O(\log_3 q),$$
where $\chi_0$ is the principal character modulo $q$. On the other hand, as in the proof of Lemma \ref{Orders}, if $\psi_1 \neq \psi_2$ then we have
\begin{align*}
\mb{D}(\psi_1,\chi_0\psi_2 n^{iu};y)^2 &= \log\log y - \log|L(1+1/\log y + iu,\psi_1\bar{\psi}_2)| + O(\sum_{p|q} 1/p) \\
&\geq \log\log q - \frac{1}{2}\log(m_1m_2)- O(\log_3 q) \\
&\geq (1-o(1))\log\log q - \log(1/(\delta_1\delta_2)).
\end{align*}
This is a contradiction, given $\delta_1,\delta_2 > (\log_2 q)^{-1/2}$. Hence, $\psi_1 = \psi_2$, and we can take $\psi$ independent of $\delta$ in this range.\\
%This implies the first claim of the proposition. \\
We turn to the second claim of the proposition. Assume that the conclusion is false for an odd primitive character $\chi$ modulo $q$ of order $g$. Thus, we can find a $\delta > 0$ such that
$$\max_{1 \leq t \leq q} \left|\sum_{n \leq t} \chi(n)\right| \geq \delta \sqrt{q}\log q.$$
The first claim of the proposition implies that there is an even primitive character $\psi$ of conductor $m \ll 1$ and a $t > q^{\e}$ (with $\e \gg \delta/\log(1/\delta)$) such that 
\begin{equation}\label{ContraBd}
\sum_{n \leq t} \chi(n)\bar{\psi}(n) \gg_{\delta} t.
\end{equation}
%The contrapositive of the first claim implies that if, for any $\e > 0$ and any $t > q^{\e}$ we have
%\begin{align*}
%\sum_{n \leq t} \chi(n)\bar{\psi}(n) = o(t)
%\end{align*}
%for any $\psi$ even and primitive of conductor $m \ll_{\e} 1$ then $\max_{1 \leq t \leq q} \left|\sum_{n \leq t} \chi(n)\right| = o(\sqrt{q}\log q)$. 
Put $\tilde{q} := [q,m]$. Now, write $\chi \bar{\psi} = \xi^{\ast}\xi_0$, where $\xi^{\ast}$ is primitive modulo $\tilde{q}^{\ast}$ and $\xi_0$ is a principal character modulo $\tilde{q}_0$, with $\tilde{q} = \tilde{q}^{\ast}\tilde{q}_0$. We note that since $\chi$ and $\psi$ are both primitive, $\tilde{q}_0 \leq \min\{q,m\} = m \ll 1$, and moreover since $\text{ord}(\psi)|g$, $\text{ord}(\xi^{\ast}) = \text{ord}(\chi \bar{\psi})$ must divide $g$. \\
Clearly, $\xi^{\ast}$ is an odd character since $\chi\bar{\psi}$ is. By hypothesis, we thus know that, uniformly in $x > q^{\e} \geq (\tilde{q}^{\ast})^{\e}$,
\begin{align} \label{CancPrim}
\sum_{n \leq x} \xi^{\ast}(n) = o(x).
\end{align}
We apply this to the estimation of partial sums of $\chi\bar{\psi}$. Indeed,
\begin{align*}
\left|\sum_{n \leq t} \chi(n)\bar{\psi}(n)\right| = \left|\sum_{d|\tilde{q}_0} \mu(d)\chi(d)\bar{\psi}(d)\sum_{n \leq t/d} \xi^{\ast}(n)\right| \leq \sum_{d|\tilde{q}_0 \atop d < t/(\tilde{q}^{\ast})^{\e/2}} \left|\sum_{n \leq t/d} \xi^{\ast}(n)\right| + \sum_{d|\tilde{q}_0 \atop d \geq t/(\tilde{q}^{\ast})^{\e/2}} \left|\sum_{n \leq t/d}\xi^{\ast}(n)\right|.
\end{align*}
To the first double sum we apply \eqref{CancPrim}, giving the contribution
$$\sum_{d|\tilde{q}_0 \atop d < t/(\tilde{q}^{\ast})^{\e/2}} \left|\sum_{n \leq t/d} \xi^{\ast}(n)\right| = o\left(t\sum_{d|\tilde{q}_0}1/d\right) = o(t),$$
in light of the bound $\tilde{q}_0 \ll 1$. Estimating the second sum trivially, we get
$$\ll t\sum_{d|q_0 \atop d \geq t/(\tilde{q}^{\ast})^{\e/2}} 1/d \ll (\tilde{q}^{\ast})^{\e/2} \tau(q_0) \ll_{\e} t q^{-\e/2}.$$
Thus, on the whole, we find that
$$\sum_{n \leq t} \chi(n)\bar{\psi}(n) = o(t),$$
at the scale $t$. This contradicts \eqref{ContraBd} for $\delta >0$ given, so the second claim of the proposition also follows.
%If $t/d > q^{\e}$ then the above gives
%
%The only non-triviality arises from the fact that the characters $\chi \bar{\psi}$ arising in the first claim are not necessarily primitive. Nevertheless, 
%
%The second follows from the contrapositive of the first, noting that if $\psi$ is a character of order $k$ and $k|g$ then $\chi\bar{\psi}$ has order $g$.
%But if $\e \leq \frac{\delta}{\log(1/\delta)} \ll (\lambda e^{\lambda})^{-1}$ then $y > q^{\e}$ and we ought to have
%$$\sum_{n \leq y} \chi(n)\bar{\psi(n)} = o(y),$$
%since $\chi\bar{\psi}$ is an odd character. This contradiction implies the claim.
\end{proof}
\begin{rem} \label{RemFrGo}
Since the first claim of the proposition applies to even characters $\chi$ of fixed order as well, the proof of the second claim above can also be used to show that cancellation in short sums for all odd characters of order dividing $g$ implies cancellation in short sums for all even characters of order $g$. Together with our Theorem \ref{THMGEN}, this shows that if the P\'{o}lya-Vinogradov inequality is improved for all odd primitive characters of orders dividing $g$ then all even primitive characters of order $g$ exhibit cancellation in their short sums. This complements the work of \cite{FrGo}, which showed that in the case $g = 2$ improvements in P\'{o}lya-Vinogradov for all even primitive quadratic characters implies cancellation in short sums of all odd primitive quadratic characters. 
\end{rem}
\section{Obstructions in Extending Theorem \ref{THMGEN} to all Odd Primitive Characters}
%We expect that the phenomenon implicit in Theorem \ref{THMGEN} ought to hold for arbitrary odd primitive characters, regardless of their order. 
In this section, we explain the shortcoming in our argument that prevents us from extending Theorem \ref{THMGEN} to odd characters, irrespective of their order. \\
The key simplifying feature of the bounded order case is the fact that if a character $\chi$ has large C\'{e}saro partial sums, $\chi$ must correlate heavily with 1, i.e., that $\mb{D}(\chi,1;x)^2$ is small. In general, this is not a condition furnished by Hal\'{a}sz' theorem, failing when $k$ is exceedingly large (e.g., on the order of $\phi(q)$, where $q$ is the modulus of $\chi$). \\
Nevertheless, Hal\'{a}sz' theorem does provide a $t_0 \in [-T,T]$ such that $\mb{D}(\chi,n^{it_0};x)$ (for suitably chosen $T$), and one might try to exploit the argument in Section 2 with $h := \chi n^{-it_0}$ instead of $\chi$ itself. The following result of Granville and Soundararajan suggests that this strategy might be effective.
\begin{lem}[Theorem 12.1 in \cite{GrSoBOOK}]
Let $f : \mb{N} \ra \mb{U}$ be multiplicative. Let $t_0 \in [-(\log x)^2,(\log x)^2]$ be such that $\mb{D}(f,n^{it_0};x)^2$ is minimal. Then
$$\frac{1}{x}\sum_{n \leq x} f(n) = \frac{x^{it_0}}{1+it_0} \cdot \frac{1}{x}\sum_{n \leq x} f(n)n^{-it_0} + O\left(\frac{\log_2 x}{(\log x)^{2-\sqrt{3}}}\right).$$
\end{lem}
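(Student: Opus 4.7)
The plan is to combine Abel summation with a Lipschitz-type mean value estimate. Write $g(n) := f(n) n^{-it_0}$ and $G(u) := \sum_{n \leq u} g(n)$, so that $f(n) = g(n) n^{it_0}$. Summation by parts gives
$$\sum_{n \leq x} f(n) = G(x) x^{it_0} - it_0 \int_1^x G(u) u^{it_0 - 1}\, du.$$
Observe that, if one could replace $G(u)/u$ by the constant $G(x)/x$ inside the integral, then the integral becomes $(G(x)/x) \int_1^x u^{it_0}\,du = (G(x)/x)(x^{1+it_0}-1)/(1+it_0)$, and combining with the boundary term $G(x)x^{it_0}$ collapses algebraically to $\frac{x^{it_0}}{1+it_0}G(x) + O(|t_0|/|1+it_0|)$. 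Thus the entire identity reduces to quantifying the slow variation of $G(u)/u$ on a suitable range $[x^{1-\eta},x]$.

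This is where the minimality of $t_0$ is crucial: by construction, the de-twisted function $g$ is not $n^{iu}$-pretentious for any small $u \neq 0$, so its running mean cannot oscillate in a residual way. The precise quantitative version I would invoke is a Lipschitz-type mean value theorem (Granville--Soundararajan, \cite{GrSoBOOK}) of the shape
$$\left| \frac{G(u)}{u} - \frac{G(x)}{x} \right| \ll \frac{\log_2 x}{(\log x)^{2-\sqrt{3}}}, \qquad u \in [x^{1-\eta},x],$$
for suitably small $\eta$. With this in hand, splitting $\int_1^x G(u) u^{it_0-1}du$ at $u = x^{1-\eta}$ and inserting the trivial bound $|G(u)|\leq u$ on $[1, x^{1-\eta}]$ gives the desired main term plus an error of the form $|t_0|x^{-\eta} + |t_0|(\log_2 x)/(\log x)^{2-\sqrt{3}}$ after dividing by $x$. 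Choosing $\eta$ slightly smaller than $1$ (or shrinking with $x$) absorbs the first error against $|t_0| \leq (\log x)^2$, leaving exactly the stated error term.

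The main obstacle is establishing the Lipschitz bound with the specific exponent $2-\sqrt{3}$. Its proof requires applying the Hal\'{a}sz--Montgomery--Tenenbaum estimate $y^{-1}|\sum_{n\leq y} h(n)| \ll \mc{D}_h(y;T)e^{-\mc{D}_h(y;T)} + T^{-1/2}$ to the family of shifted functions $g\cdot n^{-iu}$ with $|u|$ in an appropriately chosen window, then optimizing the parameter $T$; the exponent $2-\sqrt{3}$ emerges as the saddle-point balance between the exponential-decay term and the $T^{-1/2}$ tail after accounting for how the pretentious distance grows in $u$ near the minimizer $t_0$. Once this Lipschitz estimate is available, the remaining partial-summation bookkeeping is routine.
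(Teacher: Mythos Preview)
The paper does not prove this lemma at all; it is simply quoted as Theorem~12.1 of \cite{GrSoBOOK} and used as a black box in Section~5. So there is no in-paper argument to compare against, only the question of whether your sketch stands on its own.

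There is a genuine gap in the error accounting, and it is precisely the factor $|t_0|$ that partial summation places in front of the integral. After dividing by $x$, the error you must control on $[x^{1-\eta},x]$ is
\[
\frac{|t_0|}{x}\int_{x^{1-\eta}}^{x}\left|\frac{G(u)}{u}-\frac{G(x)}{x}\right|\,du,
\]
and since $|t_0|$ may be as large as $(\log x)^2$, this cannot be absorbed by the Lipschitz bound alone. Indeed, the Granville--Soundararajan Lipschitz estimate actually gives
$|G(u)/u-G(x)/x|\ll \bigl(\log(2x/u)/\log x\bigr)^{2-\sqrt3+o(1)}$,
so over a range $[x^{1-\eta},x]$ with $\eta$ bounded away from $0$ this is only $O(\eta^{2-\sqrt3})$, not $O\bigl((\log_2 x)/(\log x)^{2-\sqrt3}\bigr)$ as you claim. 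If instead you shrink $\eta$ enough to make the Lipschitz term that small, then $x^{-\eta}$ is no longer negligible against $|t_0|\leq(\log x)^2$. No single choice of $\eta$ balances both errors uniformly in $|t_0|$; with the optimal balance one is still left with an error of order $|t_0|/(\log x)^{2-\sqrt3}$, which can be as large as $(\log x)^{\sqrt3}$. Your final sentence (``leaving exactly the stated error term'') overlooks that the second error term you wrote down still carries the factor $|t_0|$.

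The argument in \cite{GrSoBOOK} does not proceed by naked partial summation for this reason. One route is to work directly with the Hal\'asz integral/Perron representation, so that the factor $x^{it_0}/(1+it_0)$ emerges from a single saddle contribution near $s=1+it_0$ and no loose $|t_0|$ appears. Alternatively one can split into the regimes $|t_0|\ll 1$ (where your scheme is fine) and $|t_0|\gg 1$, invoking in the latter a refined Hal\'asz bound of the shape $x^{-1}\bigl|\sum_{n\le x}f(n)\bigr|\ll (1+M)e^{-M}/|1+it_0|$ to show that both sides of the asserted identity are already $O(1/|t_0|)$; but making the intermediate range close requires more than the one-line bookkeeping you describe.
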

Now, suppose we wanted to prove a version of Proposition \ref{CESTOLOG} for arbitrary multiplicative functions taking values in $S^1 \cup \{0\}$. Assume for simplicity that $\xi(x) = o\left(\frac{(\log x)^{2-\sqrt{3}}}{\log_2 x}\right).$ Then the lemma suggests that a hypothesis like $\frac{1}{x}\sum_{n \leq x} f(n) \gg 1/\xi(x)$ implies, in fact, that $$\left|\frac{1}{x(1+it_0)}\sum_{n \leq x}\tilde{f}(n)\right| > 1/(2\xi(x)),$$
where $\tilde{f}(n) := f(n)n^{-it_0}$. The crucial point here is that $\mb{D}(\tilde{f},1;x)^2$ is now small, and this was of importance in the proof of Proposition \ref{CESTOLOG} (in that the lower bound arising from Hildebrand's theorem for the C\'{e}saro partial sums of $1\ast 1\ast \tilde{f}\ast\bar{\tilde{f}}$ depends on $\mb{D}(\tilde{f},1;x)^2$ precisely). Running through the machinery of Lemma \ref{PRETDISTEST} and Proposition \ref{CESTOLOG}, we may conclude that 
$$\max_{\sqrt{x} < y \leq x} \left|\frac{1}{\log y} \sum_{n \leq y} \frac{\tilde{f}(n)}{n}\right|$$
is also large in a manner depending on $\xi(x)$ and $|t_0|$. \\
Now in order to proceed as in Section 3, where we selected $f= \chi\psi$, we must find a way of replacing the logarithmic mean of $\tilde{f}$ by that of $f$ in some way, since Lemma \ref{CASEWITHPAR} relies heavily on the periodicity of $f$, and $\tilde{f}$ is not periodic. However, unlike in the case of C\'{e}saro partial sums, we do not expect that the logarithmically averaged partial sums of $\tilde{f}$ and $f$ are of the same size. Indeed, if we take a simple example like $f \equiv 1$, the logarithmic partial sums of $\tilde{f}$ behaves like $\zeta(1+it_0)$, which is of size $1/|t_0|$ for small $|t_0|$ and of size $O(\log(2+|t_0|))$ for larger $t_0$. If $|t_0|\log y$ is large enough, in particular, then this is small compared to $\log y$. \\
It would be interesting to determine how an appeal to the function $\tilde{f}$ might be circumvented here.
\section*{Acknowledgments}
\noindent We would like to thank Andrew Granville, Oleksiy Klurman and Maksym Radziwi\l\l \ for their helpful advice and encouragement. 

\end{document}